\DeclarePairedDelimiter\floor{\lfloor}{\rfloor}
\newtheorem{lem}{Lemma}[section]
\newtheorem{coro}[lem]{Corollary}
\newtheorem{thm}[lem]{Theorem}
\newtheorem{prop}[lem]{Proposition}
\theoremstyle{remark}
\newtheorem{rem}[lem]{Remark}
\newcommand{\D}{\Delta}
\newcommand{\vp}{\varphi}
\newcommand{\R}{\mathbb{R}}
\newcommand{\ve}{\varepsilon}
\newcommand{\rn}{\mathbb{R}^n}
\newcommand{\N}{\mathbb{N}}
\newcommand{\HH}{\mathcal{H}}
\newcommand{\NN}{\mathcal{N}}
\newcommand{\pa}{\partial}
\DeclareMathOperator{\diam}{diam}
\DeclareMathOperator{\capacity}{Cap}
\newcommand{\s}{\mathcal{S}}
\numberwithin{equation}{section}
\begin{document}

\title{Removability of singularities and superharmonicity for some fractional Laplacian equations}

\author{Weiwei Ao, Mar\'ia del Mar Gonz\'alez,  Ali Hyder, Juncheng Wei}
%

\newcommand{\Addresses}{{
  \bigskip
  \footnotesize

Weiwei Ao,  \textsc{Wuhan University,  Department of Mathematics and Statistics, Wuhan, 430072, PR China}\par\nopagebreak
  \textit{E-mail address}, \texttt{wwao@whu.edu.cn}\\

  Mar\'ia del Mar Gonz\'alez, \textsc{Universidad Aut\'onoma de Madrid. Departamento de Matem\'aticas, Campus de Cantoblanco, 28049 Madrid, Spain}\par\nopagebreak
  \textit{E-mail address}, \texttt{mariamar.gonzalezn@uam.es}\\

Ali Hyder, \textsc{Johns Hopkins University, Department of Mathematics,  21218 Baltimore, MD, USA}\par\nopagebreak
  \textit{E-mail address}, \texttt{ahyder4@jhu.edu}\\

  Juncheng Wei, \textsc{Department of Mathematics, University of British Columbia, V6T1Z2 Vancouver, BC, Canada}\par\nopagebreak
  \textit{E-mail address}, \texttt{jcwei@math.ubc.ca}
}}

\maketitle

\begin{abstract}

We study some qualitative properties (including removable singularities and superharmonicity)  of  non-negative  solutions to
$$ (-\D)^\gamma u=fu^p\quad\text{in }\R^n\setminus\Sigma $$
 which are singular at $\Sigma$. Here $\gamma \in (0, \frac{n}{2})$.  Among other things,  we first prove  that if $\Sigma$ is a compact set in $\R^n$ with Assouad dimension  $\bf d$  (not necessarily an integer), ${\bf d}<n-2\gamma$, and   $ u\in L_\gamma(\R^n)\cap L^p_{loc}({\R^n\setminus\Sigma})$ is a non-negative solution
for some  $$p>\frac{n-\bf d}{n-{\bf d}-2\gamma},$$ then $u\in L^p_{loc}(\R^n)$ and $u$ is a distributional solution in $\mathbb R^n$.  Then  we prove that $ (-\D)^\sigma u >0$ for all $ \sigma \in (0, \gamma)$, if $\Sigma=\emptyset$.

\end{abstract}

\medskip

\section{Introduction and statement of results}

Fix $\gamma\in(0,\frac{n}{2})$.
Set $\Sigma\subset\mathbb R^n$ and consider non-negative  solutions to
\begin{align}\label{singu-13}(-\D)^\gamma u=fu^p\quad\text{in }\R^n\setminus\Sigma \end{align}
that are singular at $\Sigma$. Here $f$ is a measurable function; in addition we suppose that there exists $C>0$ such that
\begin{equation}\label{condition-f}
\frac1C\leq f\leq C\quad\text{in }\R^n.
\end{equation}
 To give a meaning to equation \eqref{singu-13}, we need to assume that $u\in L_\gamma(\R^n)$ and $u^p\in L^1_{loc}(\R^n\setminus\Sigma)$, where we have defined, for $s\in\mathbb R$,
$$L_s(\R^n):=\left\{ u\in L^1_{loc}(\R^n):\int_{\R^n}\frac{|u(x)|}{1+|x|^{n+2s}}\,dx<\infty \right\}.$$
Then  \eqref{singu-13} is to be understood in the following sense:
  \begin{align}\label{singular-equation} \int_{\R^n}u(-\D)^\gamma \vp \,dx=\int_{\R^n} fu^p\vp\, dx\quad \text{for every }\vp\in C_c^\infty(\R^n\setminus\Sigma). \end{align}

For the particular power $p=\frac{n+2\gamma}{n-2\gamma}$,  \eqref{singu-13} is the fractional curvature equation in conformal geometry  \cite{Chang-Gonzalez,Case-Chang,Gonzalez:survey}. More precisely, let $|dx|^2$ be the Euclidean metric and consider a conformal change $g=u^{\frac{4}{n-2\gamma}}|dx|^2$ for some smooth positive  function $u$.  One can define the conformal fractional Laplacian operator with respect to the metric $g$, which satisfies
$$P^g_\gamma=u^{-\frac{n+2\gamma}{n-2\gamma}} (-\Delta)^{\gamma}(u \,\cdot). $$
The fractional curvature of $g$ is given by
\begin{equation}\label{fractional-curvature}
Q_{\gamma}^g:=P_\gamma^g(1)=u^{-\frac{n+2\gamma}{n-2\gamma}}(-\Delta)^\gamma u.
\end{equation}
This definition can be extended to more general classes of manifolds  but let us concentrate on Euclidean background. Note here that in the local case $\gamma=1$, curvature \eqref{fractional-curvature} is simply the scalar curvature times a multiplicative constant, while for $\gamma=2$, it coincides with the  $Q$-curvature associated to the Paneitz operator.

When $f\equiv 1$, \eqref{singu-13} yields a fractional order generalization of the Yamabe problem.
In the smooth manifold case some references are \cite{Gonzalez-Qing,Gonzalez-Wang,Kim-Musso-Wei,Mayer-Ndiaye}. Nevertheless, the fractional Yamabe problem in the presence of singularities it is far from being resolved, and the dimension of the singularity is strongly tied to the sign of the curvature \cite{Gonzalez-Mazzeo-Sire}. We will restrict ourselves to the (more interesting) positive case. In particular, isolated singularites have been considered in \cite{CaffarelliJinSireXiong,DelaTorre-Gonzalez,DelaTorre-delPino-Gonzalez-Wei,Ao-DelaTorre-Gonzalez-Wei,Jin-Xiong},
while solutions with singular set $\Sigma$ a smooth submanifold were studied in \cite{Ao-Chan-DelaTorre-Fontelos-Gonzalez-Wei,Jin-Queiroz-Sire-Xiong}, for instance. See also \cite{Ao-Chan-Gonzalez-Wei} for a construction involving more general singular sets (at the expense of not having a complete metric).

In this paper we would like to show, for $\gamma\in(0,\frac{n}{2})$, that the singularity in \eqref{singu-13} is removable, in the sense that the equation holds on all of $\R^n$, this is,
   $u^p\in L^1_{loc}(\R^n)$  and that the above relation  \eqref{singular-equation} holds for every $\vp\in\s(\R^n)$.    Our arguments do not rely on the well known extension problem for the fractional Laplacian \cite{Caffarelli-Silvestre}; instead, we use an integral characterization.

\begin{thm}\label{thm-3}  Take $\Sigma$ a finite number of points. Let $u\geq 0$ be a non-trivial solution to  \eqref{singu-13}. Assume that
$$p\geq \frac{n}{n-2\gamma},\quad\gamma\in\Big(0,\frac n2\Big),$$
and $f$ satisfies \eqref{condition-f}. 
Then $u^p\in L^1_{loc}(\R^n)$  and $u$ is a distributional solution in $\mathbb R^n$.
\end{thm}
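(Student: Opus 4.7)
The plan is to reduce to the case $\Sigma=\{0\}$ by a partition-of-unity argument, prove $u^p\in L^1_{loc}(\mathbb{R}^n)$ in a neighborhood of the origin, and then extend the weak formulation \eqref{singular-equation} from $C_c^\infty(\mathbb{R}^n\setminus\{0\})$ to $\mathcal{S}(\mathbb{R}^n)$ by a standard cutoff-and-limit procedure.

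Concretely, fix $R>0$ small and, for $0<\delta\ll R$, choose a radial cutoff $\eta_\delta\in C_c^\infty(\mathbb{R}^n\setminus\{0\})$ with $\eta_\delta\equiv 1$ on $B_R\setminus B_{2\delta}$, $\eta_\delta\equiv 0$ on $B_\delta\cup(\mathbb{R}^n\setminus B_{2R})$, and the usual scaling $|\nabla^k\eta_\delta|\lesssim\delta^{-k}$ on the inner transition annulus. Testing \eqref{singular-equation} against $\eta_\delta$ gives
$$\int_{\mathbb{R}^n}fu^p\eta_\delta\,dx=\int_{\mathbb{R}^n}u\,(-\Delta)^\gamma\eta_\delta\,dx,$$
and since the LHS is monotone as $\delta\downarrow 0$ it suffices to bound the RHS uniformly in $\delta$. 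Using the singular integral representation of $(-\Delta)^\gamma$ together with a rescaling of the inner cutoff, I would establish the pointwise bound
$$|(-\Delta)^\gamma\eta_\delta(x)|\leq C\min\bigl(\delta^{-2\gamma},\,\delta^n|x|^{-n-2\gamma}\bigr)+C_R(1+|x|)^{-n-2\gamma},$$
where the $C_R$-term accounts for the outer cutoff at scale $R$.

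Integrating against $u$, the $C_R$-contribution is controlled uniformly in $\delta$ by $\|u\|_{L^1(B_{2R})}+\|u\|_{L_\gamma(\mathbb{R}^n)}$, both of which are finite by hypothesis. The decisive piece is
$$\delta^{-2\gamma}\int_{B_{4\delta}}u\,dx,$$
to which I would apply H\"older's inequality with conjugate exponents $p,p'$:
$$\delta^{-2\gamma}\int_{B_{4\delta}}u\,dx\leq C\delta^{n/p'-2\gamma}\Bigl(\int_{B_{4\delta}}u^p\,dx\Bigr)^{1/p}.$$
The assumption $p\geq n/(n-2\gamma)$ is precisely equivalent to $n/p'-2\gamma\geq 0$, so the prefactor does not blow up as $\delta\to 0$; this is the algebraic reason the scheme is viable. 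The residual contributions of the form $\delta^n\int u\,|x|^{-n-2\gamma}\,dx$ can be handled by the same H\"older inequality on dyadic annuli.

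The main obstacle is closing the estimate, because the inner integral $\int_{B_{4\delta}}u^p\,dx$ is itself the quantity to be bounded. In the supercritical case $p>n/(n-2\gamma)$ the prefactor $\delta^{n/p'-2\gamma}$ carries a positive power of $\delta$, which produces the smallness needed to absorb the term after running the argument with nested cutoffs vanishing on a smaller ball $B_{\delta'}$ with $\delta'\ll\delta$ and then letting $\delta'\to 0$ via monotone convergence. The critical case $p=n/(n-2\gamma)$ is genuinely harder: the prefactor equals $1$, so absorption must come either from the absolute continuity of $r\mapsto\int_{B_r}u^p\,dx$ bootstrapped from weaker integrability, or from a Moser/capacity-type iteration exploiting $f\leq C$. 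Once $u^p\in L^1_{loc}(\mathbb{R}^n)$ is proved, the extension to $\varphi\in\mathcal{S}(\mathbb{R}^n)$ is routine: test with $\eta_\delta\varphi\in C_c^\infty(\mathbb{R}^n\setminus\{0\})$ and pass to the limit, using dominated convergence (with $fu^p|\varphi|\in L^1$) on the nonlinear side and the same pointwise estimates together with $u\in L_\gamma$ on the linear side.
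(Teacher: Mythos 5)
Your cutoff and the pointwise bound on $(-\D)^\gamma\eta_\delta$ match the paper (Lemma~\ref{lemma:estimate}), and testing against $\eta_\delta$ and controlling the tail by $u\in L_\gamma$ is the right start. The gap is in the decisive step. You bound
$$\delta^{-2\gamma}\int_{B_{4\delta}}u\,dx\leq C\,\delta^{n/p'-2\gamma}\Bigl(\int_{B_{4\delta}}u^p\,dx\Bigr)^{1/p},$$
but $\int_{B_{4\delta}}u^p\,dx$ is taken over a ball \emph{containing} the singularity, where a priori it may be $+\infty$. A decaying power of $\delta$ multiplying a possibly infinite quantity gives no information, and the nested-cutoff/monotone-convergence scheme you sketch inherits the same defect at every scale: it never produces a self-contained inequality among a priori finite quantities. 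Also, your assertion that the critical case $p=n/(n-2\gamma)$ ``is genuinely harder'' and needs a Moser-type iteration is not how the paper proceeds; there is a single argument valid for all $p\geq n/(n-2\gamma)$.

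The paper resolves this by keeping the term $\frac{C}{\ve^{2\gamma}}\int_{B_{2\ve}}u\,dx$ as is and exploiting that it is a priori \emph{finite}, because $u\in L_\gamma(\R^n)\subset L^1_{loc}(\R^n)$. After first absorbing $C(\int_{B_1\setminus B_{2\ve}}u^p)^{1/p}$ (finite, away from the singularity) via the algebraic trick $a^p\leq C_1+C_2a\Rightarrow a^p\leq C_3$, one obtains the main estimate
$$\int_{\R^n}u^p\eta_\ve\,dx\leq C+\frac{C}{\ve^{2\gamma}}\int_{B_{2\ve}}u\,dx.$$
Then $\int_{B_{2\ve}}u$ is decomposed dyadically over annuli $\{\ve/2^{k+1}<|x|\leq\ve/2^k\}$: on each annulus H\"older gives $(\ve/2^k)^{n/p'}(\int_{B_1\setminus B_{\ve/2^{k+1}}}u^p)^{1/p}$, with the $u^p$-integral over a region avoiding $\Sigma$, hence finite; feeding the main estimate back in at scale $\ve/2^{k+1}$ converts it into an $\int u$-integral over a smaller ball. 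Summing the geometric series (convergent since $n/p'-2\gamma/p>0$) gives the self-improving, all-terms-finite inequality $\int_{B_{2\ve}}u\leq C\ve^{n/p'-2\gamma/p}(\int_{B_\ve}u)^{1/p}+C\ve^{n/p'}$, which by a second elementary absorption yields $\int_{B_{2\ve}}u\leq C\ve^{2\gamma}$ whenever $p\geq n/(n-2\gamma)$. Plugging back gives a uniform bound on $\int u^p\eta_\ve$, hence $u^p\in L^1_{loc}$. This dyadic bootstrap, which never places a $u^p$-integral over a neighborhood of $\Sigma$ until the very end, is precisely the step missing from your proposal.
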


In contrast to Theorem \ref{thm-3}, in the (very) subcritical regime it is not possible to have non-negative distributional solutions in $\mathbb R^n$:

\begin{prop}\label{prop:u-vanish}
If $u$ is a non-negative weak solution to
\begin{align}\label{5} (-\D)^\gamma u=fu^p\quad\text{in }\R^n, \end{align}
for $f$ satisfying \eqref{condition-f} with $1<p<\frac{n}{n-2\gamma}$, then $u\equiv0$.
\end{prop}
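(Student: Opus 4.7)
The plan is a Pohozaev/capacity-style test-function argument, exploiting the fact that for $1<p<\tfrac{n}{n-2\gamma}$ the natural scaling exponent of $\int_{B_R}u^p$ becomes negative, forcing it to vanish. Fix $\psi\in C_c^\infty(\R^n)$ with $0\le\psi\le 1$, $\psi\equiv 1$ on $B_1$ and $\psi\equiv 0$ outside $B_2$, and set $\psi_R(x):=\psi(x/R)$. A standard pointwise bound for smooth compactly supported $\psi$ gives $|(-\D)^\gamma\psi(y)|\le C(1+|y|)^{-n-2\gamma}$, hence by scaling
\[|(-\D)^\gamma\psi_R(x)|\le CR^{-2\gamma}w_R(x),\qquad w_R(x):=\bigl(1+|x|/R\bigr)^{-n-2\gamma}.\]
Testing \eqref{5} against $\psi_R$ and using the lower bound in \eqref{condition-f} together with $u\ge 0$ yields the master estimate
\[\int_{\R^n}u^p\psi_R\,dx\le C\int_{\R^n}u\,|(-\D)^\gamma\psi_R|\,dx\le CR^{-2\gamma}\int_{\R^n}u\,w_R\,dx.\]

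To initialize the bootstrap I would extract a baseline polynomial bound: for $R\ge 1$ one has $w_R(x)\le R^{n+2\gamma}(1+|x|)^{-n-2\gamma}$, so $u\in L_\gamma(\R^n)$ and the master estimate give $I(R):=\int_{B_R}u^p\,dx\le C\|u\|_{L_\gamma}R^n$. Next I apply H\"older's inequality to the right-hand side of the master estimate: since $\int_{\R^n}w_R=CR^n$,
\[\int u\,w_R\le\Bigl(\int u^p\,w_R\Bigr)^{1/p}\Bigl(\int w_R\Bigr)^{1/p'}\le CR^{n/p'}\Bigl(\int u^p\,w_R\Bigr)^{1/p}.\]
A dyadic decomposition over the annuli $A_k:=B_{2^{k+1}R}\setminus B_{2^kR}$, combined with the current growth bound $I(R)\le CR^\alpha$ (with some $\alpha<n+2\gamma$), shows $\int u^p\,w_R\le C'R^\alpha$ via the convergent series $\sum_k 2^{k\alpha-k(n+2\gamma)}$. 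Substituting back into the master estimate produces the self-improvement
\[I(R)\le C R^{\alpha_{\mathrm{new}}},\qquad \alpha_{\mathrm{new}}=\Phi(\alpha):=-2\gamma+\tfrac{n(p-1)}{p}+\tfrac{\alpha}{p}.\]

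The affine map $\Phi$ is a contraction of slope $1/p<1$ with unique fixed point $\alpha^*=n-\tfrac{2\gamma p}{p-1}$, and a direct algebraic check shows $\alpha^*<0$ is precisely the subcritical condition $p<\tfrac{n}{n-2\gamma}$. Iterating $\alpha_0=n$, $\alpha_{k+1}=\Phi(\alpha_k)$ yields a strictly decreasing sequence converging to $\alpha^*$, so after finitely many steps $\alpha_k<0$; the resulting bound $I(R)\le CR^{\alpha_k}\to 0$ as $R\to\infty$, together with monotonicity of $R\mapsto I(R)$, forces $I\equiv 0$ and hence $u\equiv 0$.

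The main technical point I expect is keeping the constants uniform in $R$ along the bootstrap; this reduces to tracking $\alpha_k<n+2\gamma$ throughout (automatic since the sequence strictly decreases from $n$), so that the dyadic series has a constant depending only on $\alpha_k,n,\gamma,p,C$. A secondary delicate issue is verifying that the test-function identity is legitimate at the outset: the pairing $\int u\,(-\D)^\gamma\psi_R\,dx$ is finite exactly because the decay rate $|x|^{-n-2\gamma}$ of $(-\D)^\gamma\psi_R$ matches the integrability condition $u\in L_\gamma(\R^n)$ built into the notion of weak solution.
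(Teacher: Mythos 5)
Your proposal is correct, and it takes a genuinely different route from the paper's. The paper proves \eqref{claim2.6} by a bootstrap with test functions of the form $|x|^{-\rho}$ on $B_1^c$ (Lemma \ref{convergence}), then upgrades to the representation $u=v+\mathrm{const.}$ via the Liouville classification (Lemma \ref{poly}), kills the constant, and finally derives a contradiction from the lower bound $u(x)\gtrsim |x|^{-(n-2\gamma)}$. You instead run the bootstrap directly on the quantity $I(R)=\int_{B_R}u^p$, using compactly supported cutoffs $\psi_R$ and a dyadic tail decomposition to control $\int u^p w_R$. The self-map $\Phi(\alpha)=\tfrac{\alpha}{p}+\tfrac{n(p-1)}{p}-2\gamma$ is exactly the linear contraction whose fixed point $\alpha^*=n-\tfrac{2\gamma p}{p-1}=n-2\gamma p'$ matches the exponent that appears explicitly in the paper's Lemma \ref{lemma:integer}; in the integer-$\gamma$ case the paper achieves $\int_{B_R}u^p\lesssim R^{n-2\gamma p'}$ in a single step because $(-\D)^\gamma\vp_R^q$ is then compactly supported, and your dyadic argument is precisely what replaces that trick for non-integer $\gamma$, where the kernel's tails must be summed. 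Your route is more elementary and self-contained — it avoids the Riesz-potential representation, Lemma \ref{est-Ls}, and the Liouville theorem entirely — at the modest cost of tracking the iteration constants, which you correctly observe is harmless since only finitely many steps are needed and $\alpha_k<n+2\gamma$ throughout. One presentational remark: you should make explicit that the crude initialization $I(R)\le C\|u\|_{L_\gamma}R^n$ and the pointwise bound $w_R\le R^{n+2\gamma}(1+|x|)^{-n-2\gamma}$ both require $R\ge1$, and that $u^p\in L^1_{loc}$ (so that $\int u^p\psi_R$ is finite) is part of the notion of weak solution; with those caveats spelled out the argument is complete.
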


Theorem \ref{thm-3} and Proposition \ref{prop:u-vanish} are essentially contained in \cite{Chen-Quaas}, where the authors proved that nonnegative classical solutions to the Dirichlet problem for $(-\Delta)^\gamma u=u^p$ in $\Omega\setminus\{0\}$
are weak solutions in $\mathbb R^n$ for $p\geq \frac{n}{n-2\gamma}$. They also classified the asymptotic behavior of the singularity for smaller values of $p$. Nevertheless, our method is very different from theirs and it can be applied to more general singular sets and all powers $\gamma\in(0,\frac{n}{2})$.

\begin{rem}
For the particular value $p=\frac{n}{n-2\gamma}$, solutions with an isolated singularity have been considered in  \cite{Chan-DelaTorre1,Chan-DelaTorre2}, and a complete classification is possible. Note that these have the asymptotic form $1/[ r^{n-2\gamma}(-\log r)^{(n-2\gamma)/{2\gamma}} ]$.
\end{rem}

\begin{thm}\label{thm-4}
Let $\Sigma$ be a $m$-dimensional smooth compact, closed manifold in $\mathbb R^n$, with $0<m<n-2\gamma$, and take
 $u\geq 0$ be a non-trivial solution to  \eqref{singu-13}. Assume that $$p\geq \frac{n-m}{n-m-2\gamma},\quad\gamma\in\Big(0,\frac n2\Big),$$
and $f$ satisfies \eqref{condition-f}. 
Then $u^p\in L^1_{loc}(\R^n)$ and $u$ is a distributional solution in $\mathbb R^n$.
\end{thm}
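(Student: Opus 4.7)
The strict case $p>\frac{n-m}{n-m-2\gamma}$ is covered by the general Assouad-dimension removability result announced in the abstract (applied with $\mathbf d=m$, the Assouad dimension of a smooth compact $m$-manifold); the genuinely new content of Theorem~\ref{thm-4} is the borderline exponent $p=\frac{n-m}{n-m-2\gamma}$. The plan is to adapt the cutoff / test-function scheme used for Theorem~\ref{thm-3} (the case $m=0$) to the tubular neighborhood of a smooth submanifold.

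Let $d(x):=\dist(x,\Sigma)$. Since $\Sigma$ is smooth and compact, there is $\rho>0$ such that $d$ is smooth on $\{d<\rho\}$ and the nearest-point projection gives product coordinates $(x',x'')\in\Sigma\times B^{n-m}_\rho$ with $d(x)=|x''|$ up to a smooth change of variables. Fix $\eta\in C^\infty([0,\infty);[0,1])$ with $\eta\equiv 0$ on $[0,1]$ and $\eta\equiv 1$ on $[2,\infty)$, and set $\eta_\epsilon(x)=\eta(d(x)/\epsilon)$, extended by $1$ outside $\{d<\rho\}$. The key pointwise estimate
\[
 |(-\Delta)^\gamma\eta_\epsilon(x)|\le C\,\frac{\epsilon^{n-m}}{(\epsilon+d(x))^{n-m+2\gamma}}
\]
follows from the identity
\[
 \int_{\R^m}(|z|^2+r^2)^{-(n+2\gamma)/2}\,dz=c_{m,n,\gamma}\,r^{-(n-m+2\gamma)},
\]
which shows that, for a function $v$ on $\R^n$ depending only on the $(n-m)$ normal variables, $(-\Delta)^\gamma_{\R^n}v$ equals a constant multiple of $(-\Delta)^\gamma_{\R^{n-m}}$ of its normal profile, together with standard decay of $(-\Delta)^\gamma$ of a smooth cutoff in $\R^{n-m}$ and a rescaling in $\epsilon$.

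Next I test \eqref{singular-equation} against $\eta_\epsilon\psi\in C^\infty_c(\R^n\setminus\Sigma)$ for $\psi\ge 0$ in $C^\infty_c(\R^n)$, and decompose
\[
 (-\Delta)^\gamma(\eta_\epsilon\psi)=\eta_\epsilon(-\Delta)^\gamma\psi+\psi(-\Delta)^\gamma\eta_\epsilon+K_\epsilon,
\]
where $K_\epsilon(x)=c_{n,\gamma}\int\frac{(\eta_\epsilon(x)-\eta_\epsilon(y))(\psi(y)-\psi(x))}{|x-y|^{n+2\gamma}}\,dy$ is a lower-order commutator. The first term pairs with $u$ to give at most $C\|u\|_{L_\gamma}\|\psi\|_{C^k}$ uniformly in $\epsilon$. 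For the principal piece I split the domain at $d=2\epsilon$: on $\{d>2\epsilon\}$ one has $\eta_\epsilon\equiv 1$ and hence $u^p\psi=u^p\eta_\epsilon\psi$, so H\"older with exponents $(p,p')$ gives
\[
 \int_{\{d>2\epsilon\}}u\psi\,|(-\Delta)^\gamma\eta_\epsilon|\le \Big(\int u^p\eta_\epsilon\psi\Big)^{1/p}\Big(\int\psi\,|(-\Delta)^\gamma\eta_\epsilon|^{p'}\Big)^{1/p'}.
\]
Plugging in the pointwise bound and rescaling $d=\epsilon t$ in tubular coordinates, the second factor equals $C\,\epsilon^{(n-m-2\gamma p')/p'}$; the borderline hypothesis forces $2\gamma p'=n-m$, so this factor is uniformly $O(1)$. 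The piece over $\{d\le 2\epsilon\}$ is controlled using the thinness $|\{d\le 2\epsilon\}\cap\mathrm{supp}\,\psi|\le C\epsilon^{n-m}$ together with H\"older against $u\in L^1_{loc}$. Writing $X_\epsilon:=\int u^p\eta_\epsilon\psi$ (finite for each $\epsilon>0$ since $u\in L^p_{loc}(\R^n\setminus\Sigma)$), we arrive at $X_\epsilon\le C(1+X_\epsilon^{1/p})$, and Young's inequality gives $X_\epsilon\le C'$ uniformly in $\epsilon$. Monotone convergence then yields $u^p\in L^1_{loc}(\R^n)$, and a final dominated-convergence step extends the equation to every $\psi\in\s(\R^n)$.

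The heart of the argument is the H\"older/absorption step at the critical exponent. A na\"\i ve weighting would produce factors of $\eta_\epsilon^{-1/(p-1)}$ that blow up on $\Sigma$, exactly where $|(-\Delta)^\gamma\eta_\epsilon|\sim\epsilon^{-2\gamma}$ is largest; the split at $d=2\epsilon$ avoids this by keeping $X_\epsilon$ rather than $\int u^p\psi$ (which is \emph{a priori} infinite) on the right. The identity $2\gamma p'=n-m$, equivalent to the borderline hypothesis and encoding the vanishing of the $(2\gamma,p')$-capacity of a smooth $m$-manifold in $\R^n$, is precisely what makes every rescaled integral converge uniformly in $\epsilon$; this is why smoothness of $\Sigma$ (rather than merely its Assouad dimension) enters the argument.
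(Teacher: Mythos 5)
Your cutoff $\eta_\epsilon=\eta(d/\epsilon)$ and the pointwise estimate
\[
 |(-\Delta)^\gamma\eta_\epsilon(x)|\lesssim\frac{\epsilon^{N}}{(\epsilon+d(x))^{N+2\gamma}},\qquad N:=n-m,
\]
agree with the paper's Lemma~\ref{lem-4.1}, and the H\"older step over $\{d>2\epsilon\}$ with the weight $\psi=\psi^{1/p}\psi^{1/p'}$ (taking advantage of $\eta_\epsilon\equiv1$ there so that the first factor is exactly $X_\epsilon^{1/p}$) is the same computation as in the paper. You also correctly identify why one cannot weight by $\eta_\epsilon^{-1/(p-1)}$. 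However, there is a genuine gap at the step you dispatch in a single sentence: controlling the piece over $\{d\le 2\epsilon\}$. There you are facing
\[
 \int_{\{d\le 2\epsilon\}}u\,\psi\,|(-\Delta)^\gamma\eta_\epsilon|\;\lesssim\;\frac{1}{\epsilon^{2\gamma}}\int_{\{d\le 2\epsilon\}}u\,\psi\,dx,
\]
and ``thinness $|\{d\le 2\epsilon\}|\lesssim\epsilon^{N}$ plus H\"older against $u\in L^1_{loc}$'' does not make this $O(1)$: from $u\in L^1_{loc}$ alone you only get $\int_{\{d\le 2\epsilon\}}u=o(1)$, with no rate, and what you actually need is the quantitative decay $\int_{\{d\le 2\epsilon\}}u=O(\epsilon^{2\gamma})$. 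H\"older against $u\in L^p$ of this set would give exactly that (since $\epsilon^{N/p'}=\epsilon^{2\gamma}$ at the borderline), but $\int_{\{d\le 2\epsilon\}}u^p<\infty$ is precisely the quantity $X_\epsilon$ you are trying to bound, so the argument is circular. The paper closes this loop with a dyadic bootstrap: first it proves the intermediate inequality $\int u^p\eta_\epsilon\le C+\frac{C}{\epsilon^{2\gamma}}\int_{\mathcal N_{2\epsilon}}u$, then applies H\"older on dyadic shells $\{\epsilon/2^{k+1}<d\le \epsilon/2^{k}\}$, feeds the intermediate inequality back in, and sums a geometric series using $\frac{N}{p'}-\frac{2\gamma}{p}>0$ to deduce $\int_{\mathcal N_{2\epsilon}}u\lesssim\epsilon^{2\gamma}$. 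Without this (or an equivalent iteration) your estimate $X_\epsilon\le C(1+X_\epsilon^{1/p})$ has an unbounded extra term and the absorption fails. A secondary, smaller issue: your claim that $(-\Delta)^\gamma_{\R^n}$ of a function of the normal variables alone equals a multiple of $(-\Delta)^\gamma_{\R^{n-m}}$ of the normal profile is exact only when $\Sigma$ is affine; for curved $\Sigma$ this is an approximation, and the errors coming from the Fermi metric expansion need to be tracked (the paper does this in the proof of Lemma~\ref{lem-4.1} by splitting the singular integral and reducing to the simple case). And your commutator decomposition of $(-\Delta)^\gamma(\eta_\epsilon\psi)$ as written is specific to $\gamma\in(0,1)$; the paper's formulation handles all $\gamma\in(0,n/2)$.
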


Now we consider the case of a general compact set $\Sigma$ in $\mathbb R^n$. Although our removability result is stated in terms of its Assouad dimension  $\bf d$, the precise property that we will use is \eqref{measure} on the size of tubular neighborhoods around $\Sigma$. Its relation to the Assouad dimension is proved in \cite{KJV}.  As a consequence, even though our problem is non-local, the main idea in the proof of Theorem \ref{thm-lambda} reduces to finding a particular cutoff function \eqref{def-eta-ve} in a tubular neighborhood of the singular set, which will be controlled by the Assouad dimension.

In paper \cite{KJV} the authors also mention, without proof, the relation between \eqref{measure} and the more standard  Minkowski dimension. Additionally, Assouad dimension has been considered in connection to fractional Hardy inequalities in $\mathbb R^n\setminus \Sigma$ (\cite{Dyda-Vahakangas,Lehrback,Dyda-Ihnatsyeva-Lehrback-Tuominen-Vahakangas}, for instance).

\begin{thm}\label{thm-lambda} Fix $\gamma\in(0,\frac n2)$. Let $\Sigma$ be compact set in $\R^n$ with Assouad dimension  $\bf d$  (not necessarily an integer), ${\bf d}<n-2\gamma$. Assume \eqref{condition-f}, and let  $ u\in L_\gamma(\R^n)\cap L^p_{loc}({\R^n\setminus\Sigma})$ be a non-negative solution to \eqref{singu-13} 
for some  $$p>\frac{n-\bf d}{n-{\bf d}-2\gamma}.$$
 Then $u\in L^p_{loc}(\R^n)$ and $u$ is a distributional solution in $\mathbb R^n$.
\end{thm}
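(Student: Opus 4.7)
The plan is to test \eqref{singular-equation} against $\eta_\varepsilon\varphi$, where $\varphi\in C_c^\infty(\R^n)$ is an arbitrary nonnegative bump that equals $1$ on a neighborhood of $\Sigma$ and $\eta_\varepsilon$ is a smooth cutoff vanishing on the tubular neighborhood $\Sigma_\varepsilon:=\{\dist(\cdot,\Sigma)<\varepsilon\}$ and equal to $1$ outside $\Sigma_{2\varepsilon}$, and then pass to the limit $\varepsilon\to 0^+$. The quantitative hypothesis $p>(n-\mathbf{d})/(n-\mathbf{d}-2\gamma)$ is equivalent, with $p':=p/(p-1)$, to the strict inequality $2\gamma p'<n-\mathbf{d}$, which is exactly the threshold needed so that the error terms concentrated in $\Sigma_{2\varepsilon}$ vanish in the limit.

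\textbf{Construction of the cutoff and its fractional Laplacian.} The Assouad content inequality (property \eqref{measure}) allows one to cover $\Sigma_\varepsilon\cap B_R$ by $\lesssim\varepsilon^{-\mathbf{d}}$ balls of radius $\varepsilon$, which yields $|\Sigma_\varepsilon\cap B_R|\le C_R\varepsilon^{n-\mathbf{d}}$ and, more importantly, the \emph{local} content estimate $|\Sigma_{2\varepsilon}\cap B(x,r)|\le Cr^\mathbf{d}\varepsilon^{n-\mathbf{d}}$ for $\varepsilon\le r$. Summing smooth bumps adapted to this cover produces $\eta_\varepsilon\in C_c^\infty(\R^n\setminus\Sigma)$ with $0\le\eta_\varepsilon\le 1$ and $|\nabla^k\eta_\varepsilon|\le C\varepsilon^{-k}$. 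A careful splitting of the singular integral defining $(-\Delta)^\gamma\eta_\varepsilon(x)$ at scale $\varepsilon$, using the local content bound to handle the tail, gives the pointwise estimate
\[|(-\Delta)^\gamma\eta_\varepsilon(x)|\le C\min\bigl\{\varepsilon^{-2\gamma},\,\varepsilon^{n-\mathbf{d}}\dist(x,\Sigma)^{\mathbf{d}-n-2\gamma}\bigr\},\]
and layer-cake integration then yields the critical $L^{p'}$ smallness
\[\|(-\Delta)^\gamma\eta_\varepsilon\|_{L^{p'}(B_R)}^{p'}\le C_R\,\varepsilon^{n-\mathbf{d}-2\gamma p'}\xrightarrow[\varepsilon\to 0]{}0.\]

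\textbf{Testing and bootstrap.} Setting $\psi_\varepsilon:=(1-\eta_\varepsilon)\varphi\ge 0$, supported in $\Sigma_{2\varepsilon}$, and plugging $\eta_\varepsilon\varphi=\varphi-\psi_\varepsilon\in C_c^\infty(\R^n\setminus\Sigma)$ into \eqref{singular-equation}, one gets
\[\int fu^p\eta_\varepsilon\varphi\,dx=\int u(-\Delta)^\gamma\varphi\,dx-\int u(-\Delta)^\gamma\psi_\varepsilon\,dx.\]
The first term on the right is bounded uniformly in $\varepsilon$ by $C\|u\|_{L_\gamma(\R^n)}$, since $(-\Delta)^\gamma\varphi=O((1+|x|)^{-n-2\gamma})$. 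For the second term, nonnegativity of $\psi_\varepsilon$ together with $\operatorname{supp}\psi_\varepsilon\subset\Sigma_{2\varepsilon}$ forces $(-\Delta)^\gamma\psi_\varepsilon\le 0$ outside $\Sigma_{2\varepsilon}$ via the kernel representation; combined with $u\ge 0$ this makes the contribution over $\Sigma_{2\varepsilon}^c$ favorable and only the integral over $\Sigma_{2\varepsilon}$ needs to be estimated. This last piece is controlled by Hölder applied to the splitting $\Sigma_{2\varepsilon}=\Sigma_\varepsilon\cup(\Sigma_{2\varepsilon}\setminus\Sigma_\varepsilon)$: on the outer annulus one uses $u\in L^p_{loc}(\R^n\setminus\Sigma)$ and the $L^{p'}$ smallness from Step~1, while on the inner part one exploits the pointwise lower bound $(-\Delta)^\gamma\psi_\varepsilon\ge(-\Delta)^\gamma\varphi$, which is valid on $\Sigma_\varepsilon$ because $\eta_\varepsilon\varphi\ge 0$ vanishes there. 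Combining these estimates with the structural bound $c_0\|u\|_{L^p(\Sigma_{2\varepsilon}^c\cap B_R)}^p\le\int fu^p\eta_\varepsilon\varphi\,dx$ from \eqref{condition-f}, one obtains a functional inequality for $N_\varepsilon:=\|u\|_{L^p(\Sigma_\varepsilon^c\cap B_R)}$ of the form $N_{2\varepsilon}^p\le C+C\varepsilon^\alpha N_\varepsilon$ with $\alpha=(n-\mathbf{d}-2\gamma p')/p'>0$. A short iteration (using the monotonicity of $\varepsilon\mapsto N_\varepsilon$ and the fact that $\varepsilon^\alpha\to 0$) then gives $\sup_\varepsilon N_\varepsilon<\infty$; monotone convergence delivers $u\in L^p(B_R)$; since $R$ is arbitrary, $u^p\in L^1_{loc}(\R^n)$, and passage to the limit $\varepsilon\to 0^+$ in the testing identity produces the distributional equation on $\R^n$.

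\textbf{Main obstacle.} The most delicate technical step is the refined pointwise bound on $(-\Delta)^\gamma\eta_\varepsilon$, which must exploit the \emph{local} Assouad content of $\Sigma$ around each point $x$ and not merely the global measure $|\Sigma_\varepsilon|\le C\varepsilon^{n-\mathbf{d}}$: the non-locality of $(-\Delta)^\gamma$ means the tail of the singular integral carries nonnegligible mass, and the naive $L^\infty$-bound $\|(-\Delta)^\gamma\eta_\varepsilon\|_\infty\le C\varepsilon^{-2\gamma}$ would degrade the $L^{p'}$ estimate and prevent closing the bootstrap at the critical exponent $(n-\mathbf{d})/(n-\mathbf{d}-2\gamma)$. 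The strict inequality in the hypothesis is exactly what provides the smallness needed to absorb, and the interplay between the fractional Laplacian and the irregular geometry of $\Sigma$ (through its Assouad dimension) is where the argument departs from the classical cutoff arguments available for smooth singular sets.
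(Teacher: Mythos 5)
Your overall plan follows the same blueprint as the paper: mollify the indicator of a tubular neighborhood of $\Sigma$ to build the cutoff $\eta_\varepsilon$, estimate $(-\Delta)^\gamma(\eta_\varepsilon\varphi)$ using the Assouad-controlled surface-measure bound \eqref{measure} via the co-area formula, test, and absorb. Your pointwise bound on $(-\Delta)^\gamma\eta_\varepsilon$ has essentially the same structure as the paper's Lemma~\ref{lem-4.4}. However, there are two genuine gaps, and one smaller but nonvanishing imprecision.

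First, the sign arguments you invoke — that $(-\Delta)^\gamma\psi_\varepsilon\le 0$ off $\operatorname{supp}\psi_\varepsilon$ and that $(-\Delta)^\gamma\psi_\varepsilon\ge(-\Delta)^\gamma\varphi$ on $\Sigma_\varepsilon$ — rely on the singular-integral representation
\[ (-\Delta)^\gamma w(x) = C_{n,\gamma}\,\mathrm{P.V.}\int_{\R^n}\frac{w(x)-w(y)}{|x-y|^{n+2\gamma}}\,dy, \]
which only holds for $\gamma\in(0,1)$. The theorem asserts the result for all $\gamma\in(0,\tfrac n2)$. For $\gamma=k+\sigma'$ with $k\ge 1$ integer and $\sigma'\in(0,1)$, one has $(-\Delta)^\gamma=(-\Delta)^{\sigma'}\circ(-\Delta)^k$, and the intermediate factor $(-\Delta)^k\psi_\varepsilon$ is not sign-definite, so the favorable signs disappear. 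The paper circumvents this entirely by working with the decomposition $(-\Delta)^\gamma(\eta_\varepsilon\varphi)=\eta_\varepsilon(-\Delta)^\gamma\varphi+I_\varepsilon$ and estimating $I_\varepsilon$ directly (after integration by parts in the $y$ variable to absorb the $(-\Delta)^k$ and pass to a non-singular integrand), which needs no sign structure and works for every $\sigma>0$.

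Second, the iteration you propose, $N_{2\varepsilon}^p\le C+C\varepsilon^\alpha N_\varepsilon$ with $N_\varepsilon:=\|u\|_{L^p(\Sigma_\varepsilon^c\cap B_R)}$, does not close. The quantity $N_\varepsilon$ is \emph{increasing} as $\varepsilon\downarrow 0$, so your recursion expresses the quantity at the coarser scale $2\varepsilon$ in terms of the larger, a priori uncontrolled, quantity at the finer scale $\varepsilon$; monotonicity goes the wrong way and there is no absorption. The paper's dyadic argument instead controls $B_\varepsilon:=\int_{\NN_{3\varepsilon}}u\,dx$, which is \emph{decreasing} as $\varepsilon\downarrow 0$ and is a priori finite (since $u\in L_\gamma(\R^n)\subset L^1_{loc}$). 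One shows, by H\"older on dyadic annuli combined with the main testing estimate at each scale, that $B_\varepsilon\le C\varepsilon^{(n-\lambda)/p'}+C\varepsilon^{(n-\lambda)/p'-2\gamma/p}B_\varepsilon^{1/p}$, which absorbs to $B_\varepsilon\le C\varepsilon^{2\gamma}$, and this bound, plugged back into the main estimate $\int_{\NN_1\setminus\NN_{3\varepsilon}}u^p\le C+C\varepsilon^{-2\gamma}B_\varepsilon$, closes the argument.

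Finally, a smaller point: your content estimates are stated with exponent exactly $\mathbf d$, but \eqref{measure} holds for every $\lambda>\mathbf d$ and not necessarily at $\lambda=\mathbf d$ itself. The paper fixes $\lambda>\mathbf d$ close enough to $\mathbf d$ that $n-2\gamma p'\ge\lambda$, which is possible precisely because the hypothesis $p>\tfrac{n-\mathbf d}{n-\mathbf d-2\gamma}$ is strict. Since you also use the strict inequality, this is fixable, but as written your inequalities with exponent $\mathbf d$ are not justified by the definition of Assouad dimension.
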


\begin{rem}\label{remark:with-corners}
If the singular set $\Sigma$ is a manifold of dimension $\bf d$ with corners, then we can allow $$p\geq\frac{n-\bf d}{n-{\bf d}-2\gamma}.$$
\end{rem}

We remark that, while Theorem \ref{thm-lambda} contains Theorems \ref{thm-3} and \ref{thm-4}, we have stated them separately since the proof of Theorem \ref{thm-lambda} builds up on the other two. Note also that it is enough to show these results assuming $f\equiv 1$ and we will do so in many places.

Some of the arguments from the proof of Theorem \ref{thm-lambda} are useful in other settings. In particular, they help understanding fractional capacity. Given any compact set $\Sigma\subset \mathbb R^n$, the fractional capacity of order $\gamma$ of $\Sigma$ is defined by
$$\capacity_\gamma(\Sigma):=\inf\left\{ \int_{\R^n}|(-\D)^\frac \gamma 2\vp|^2\,dx:\,\vp\in C_c^\infty(\R^n),\,\vp\geq 1\text{ on }\Sigma \right\}.$$
We give a removability result for $\gamma$-harmonic functions:

\begin{thm}\label{thm:capacity} Set $\gamma\in(0,1)$. Let $h\in L^\infty(\Omega)$ be a solution to
the equation
\begin{equation*}\label{harmonic-eq}
(-\D)^\gamma h=0\quad \text{in } \Omega\setminus\Sigma,
\end{equation*}
 for some compact set $\Sigma\subset\Omega$. If $\capacity_\gamma (\Sigma)=0$ then  $(-\D)^\gamma h=0$ in $\Omega$.
 \end{thm}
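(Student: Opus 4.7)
The plan is to argue by a cutoff construction in the same spirit as Theorems \ref{thm-3}--\ref{thm-lambda}, using the vanishing capacity of $\Sigma$ to produce a family of test functions that absorb the singular set in the limit. Since $\capacity_\gamma(\Sigma)=0$, after truncating and smoothing a minimizing sequence in the capacity definition I would obtain cutoffs $\vp_k\in C_c^\infty(\R^n)$ satisfying $0\le\vp_k\le 1$, $\vp_k\equiv 1$ in an open neighborhood of $\Sigma$, $\mathrm{supp}(\vp_k)$ contained in a fixed compact neighborhood of $\Sigma$ inside $\Omega$, and $\|(-\D)^{\gamma/2}\vp_k\|_{L^2(\R^n)}\to 0$ as $k\to\infty$. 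Since the supports lie in a fixed bounded set, the fractional Sobolev inequality then forces also $\|\vp_k\|_{L^2}\to 0$ and $\|\vp_k\|_{L^1}\to 0$.

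For an arbitrary $\psi\in C_c^\infty(\Omega)$, the decomposition $\psi=(1-\vp_k)\psi+\vp_k\psi$ places the summand $(1-\vp_k)\psi$ in $C_c^\infty(\Omega\setminus\Sigma)$, against which the hypothesis gives $\int h\,(-\D)^\gamma[(1-\vp_k)\psi]\,dx=0$. The proof then reduces to
$$\int_{\R^n} h\,(-\D)^\gamma(\vp_k\psi)\,dx\longrightarrow 0\qquad\text{as }k\to\infty.$$
To handle this I would invoke the Gagliardo bilinear representation (valid for $\gamma\in(0,1)$),
$$\int h\,(-\D)^\gamma(\vp_k\psi)\,dx=\frac{c_{n,\gamma}}{2}\iint_{\R^n\times\R^n}\frac{(h(x)-h(y))(\vp_k\psi(x)-\vp_k\psi(y))}{|x-y|^{n+2\gamma}}\,dx\,dy,$$
and split the integration into a far field (at least one of $x,y$ outside $\Omega$) and a near field (both in $\Omega$). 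The far field is estimated directly: $\mathrm{supp}(\vp_k\psi)$ stays at positive distance from $\Omega^c$, so the kernel is bounded there, and combining the $L^\infty(\Omega)$ and $L_\gamma(\R^n)$ bounds on $h$ with $\|\vp_k\|_{L^1}\to 0$ gives a vanishing contribution. The near field is handled by Cauchy--Schwarz paired with the Leibniz-type estimate $\|\vp_k\psi\|_{\dot H^\gamma}\lesssim\|\vp_k\|_{\dot H^\gamma}+\|\vp_k\|_{L^2}\to 0$.

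The main obstacle is the near-field Cauchy--Schwarz step, which naively demands a finite Gagliardo energy of $h$ in a neighborhood of $\Sigma$; this is not automatic from $h\in L^\infty(\Omega)$ alone, particularly when $\gamma\ge 1/2$. I would overcome this by exploiting that $h\in C^\infty(\Omega\setminus\Sigma)$ thanks to interior regularity for $\gamma$-harmonic functions, performing a dyadic decomposition in the distance to $\Sigma$, and absorbing the shells closest to $\Sigma$ using the capacity hypothesis. Conceptually, the cutoff $\vp_k$ transports the singularity of $h$ at $\Sigma$ into a distribution of vanishing $\gamma$-energy, and $\capacity_\gamma(\Sigma)=0$ is precisely what guarantees that this vanishing survives the Cauchy--Schwarz pairing.
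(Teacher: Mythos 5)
Your plan is a direct test-function argument, and it diverges substantially from the paper's proof; more importantly, the obstacle you flag at the end of your write-up is not a technicality you can iron out but the point where the whole approach breaks. The near-field Cauchy--Schwarz step needs
$$\iint_{K\times K}\frac{|h(x)-h(y)|^2}{|x-y|^{n+2\gamma}}\,dx\,dy<\infty$$
for some compact neighborhood $K$ of $\Sigma$, i.e.\ a local $\dot H^\gamma$ bound on $h$. The hypothesis gives only $h\in L^\infty(\Omega)$ and smoothness of $h$ away from $\Sigma$; neither provides any decay of the local oscillation of $h$ as one approaches $\Sigma$, so the Gagliardo energy near $\Sigma$ can be infinite. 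Your proposed rescue --- dyadic decomposition plus ``absorbing the shells using the capacity hypothesis'' --- does not resolve this, because $\capacity_\gamma(\Sigma)=0$ constrains the test functions $\vp_k$, not $h$; Cauchy--Schwarz needs finiteness of the $h$ factor regardless of how small the $\vp_k\psi$ factor becomes. Even before that, the Gagliardo bilinear identity $\int h\,(-\D)^\gamma\phi = \tfrac{c_{n,\gamma}}{2}\iint\frac{(h(x)-h(y))(\phi(x)-\phi(y))}{|x-y|^{n+2\gamma}}$ that you invoke is itself only justified a priori when $h$ has finite energy, so the step where you pass to the bilinear form is already circular.

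The paper circumvents this entirely by a comparison argument with barriers rather than a duality argument. One fixes $\Sigma\Subset\tilde\Omega\Subset\Omega$, lets $H$ be the $\gamma$-harmonic function in $\tilde\Omega$ produced by the Poisson kernel with boundary data $h$, and aims to show $h\equiv H$ on $\tilde\Omega$. Then one takes smooth neighborhoods $\Omega_k\Supset\Sigma$ shrinking to $\Sigma$, lets $\vp_k$ be the capacity potential of $\Omega_k$ (so $\vp_k\equiv 1$ on $\Omega_k$, $0\le\vp_k\le 1$, $(-\D)^\gamma\vp_k=0$ off $\overline\Omega_k$, and $\|(-\D)^{\gamma/2}\vp_k\|_{L^2}\to 0$ since $\capacity_\gamma(\Sigma)=0$, hence $\vp_k\to 0$ a.e.\ by Sobolev embedding). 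Since $h,H\in L^\infty(\tilde\Omega)$, for $M$ large $h-H-M\vp_k\le 0$ on $\Omega_k$ and on $\R^n\setminus\tilde\Omega$, while it is $\gamma$-harmonic in $\tilde\Omega\setminus\overline\Omega_k$; the maximum principle gives $h-H\le M\vp_k$ in $\tilde\Omega\setminus\Omega_k$, and letting $k\to\infty$ yields $h\le H$ (then $h\ge H$ by symmetry). No Gagliardo energy of $h$ is ever required --- only $L^\infty$ bounds and the maximum principle. This is what makes the boundedness hypothesis the right one, and why your approach, which implicitly demands $h\in H^\gamma_{\mathrm{loc}}$, is asking for more than the theorem supplies.
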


Fractional capacity for  the non-linear problem $(-\Delta)^\gamma u=u^{\frac{n+2\gamma}{n-2\gamma}}$ in $\Omega\setminus \Sigma$ was studied in \cite{Jin-Queiroz-Sire-Xiong} for exponents $\gamma\in(0,1)$. Indeed, they provided the asymptotic blow-up rate for positive solutions with a singular set of zero fractional capacity. They also gave an equivalent definition of capacity in terms of the Caffarelli-Silvestre extension for the fractional Laplacian, and considered the relation to the Hausdorff dimension of $\Sigma$.

As a by-product of the arguments in the proof of Theorem \ref{thm:capacity}, we obtain a relation between fractional capacity and property \eqref{measure} that is valid for all $\sigma\in(0,\frac{n}{2})$. This relation could then be rephrased in terms of Assouad or Minkowski dimension (see Proposition \ref{prop:capacity}).\\

In the last part of the paper we show some new superharmonicity properties for the fractional Laplacian. We consider the general problem
\begin{align} \label{eq-1}(-\D)^ \gamma u=F(x)\quad\text{in }\R^n \end{align} with $\gamma\in(0,\frac n2)$ and $F\in L^1_{loc}(\R^n)$. If $\gamma$ is not an integer, we shall assume that $u\in L_\gamma(\R^n)$.
Equation \eqref{eq-1} is to be understood as
\begin{align} \label{eq-2}\int_{\R^n}u(-\D)^\gamma\vp \,dx=\int_{\R^n }F\vp\, dx  \quad\text{for every }\vp\in  C_c^\infty(\R^n).\end{align}

\begin{thm}\label{thm-1} Fix $\gamma\in(0,\frac{n}{2})$. Let $ u \in L_\gamma (\R^n)$ be a   solution to \eqref{eq-1} for some     $F\geq0$ satisfying, in addition, that $u\in L_{s_0}(\R^n)$  for some $s_0\in (0,\frac12)$. Then
\begin{equation*}
(-\D)^\sigma u\geq0\text{ on }\R^n  \quad\text{for every } s_0\leq \sigma<\gamma.\end{equation*}
\end{thm}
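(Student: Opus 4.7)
The plan is to identify $(-\D)^\sigma u$, distributionally, with the Riesz potential
$$v(x):=c_{n,\gamma-\sigma}\int_{\R^n}\frac{F(y)}{|x-y|^{n-2(\gamma-\sigma)}}\,dy,$$
which is manifestly non-negative since $F\geq 0$. The heuristic is the Fourier-level identity $|\xi|^{2\sigma}=|\xi|^{2\gamma}\cdot|\xi|^{-2(\gamma-\sigma)}$, which at the level of test functions reads $(-\D)^\sigma\varphi=(-\D)^\gamma(I_{\gamma-\sigma}\varphi)$ for $\varphi\in C_c^\infty(\R^n)$. If one could test \eqref{eq-2} against $\psi:=I_{\gamma-\sigma}\varphi$, Fubini would give
$$\int u\,(-\D)^\sigma\varphi\,dx=\int u\,(-\D)^\gamma\psi\,dx=\int F\psi\,dx=\int F\,I_{\gamma-\sigma}\varphi\,dx=\int v\varphi\,dx,$$
yielding $(-\D)^\sigma u=v\geq 0$ in $\mathcal{D}'(\R^n)$. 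The catch is that $\psi$ is smooth but only satisfies $|\psi(x)|\lesssim (1+|x|)^{-(n-2(\gamma-\sigma))}$ at infinity, so it is not a legitimate test function for \eqref{eq-2}.

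A preliminary observation is that $(1+|x|^{n+2s})^{-1}\leq 2(1+|x|^{n+2s_0})^{-1}$ whenever $s\geq s_0$, so $u\in L_{s_0}$ automatically yields $u\in L_s$ for all $s\geq s_0$; in particular $u\in L_\sigma\cap L_\gamma$. When $\varphi\geq 0$, the function $\psi=I_{\gamma-\sigma}\varphi$ is $C^\infty$ and non-negative, while $(-\D)^\gamma\psi=(-\D)^\sigma\varphi$ is smooth with decay $O((1+|x|)^{-(n+2\sigma)})$. To legitimize the pairing I would truncate: pick a standard cutoff $\eta_R$ with $\eta_R\equiv 1$ on $B_R$ and support in $B_{2R}$, and set $\psi_R:=\eta_R\psi\in C_c^\infty(\R^n)$. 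Equation \eqref{eq-2} gives
$$\int u\,(-\D)^\gamma\psi_R\,dx=\int F\psi_R\,dx,$$
and the decomposition $(-\D)^\gamma\psi_R=(-\D)^\sigma\varphi-(-\D)^\gamma[(1-\eta_R)\psi]$ reduces the problem to
$$\int u\,(-\D)^\gamma[(1-\eta_R)\psi]\,dx\longrightarrow 0\quad\text{as }R\to\infty,$$
for then on the right-hand side $\int F\psi_R\nearrow\int v\varphi\,dx$ by monotone convergence, and $\int u\,(-\D)^\sigma\varphi\,dx$ converges absolutely thanks to $u\in L_\sigma$ paired with the decay of $(-\D)^\sigma\varphi$, so the identity $\int u\,(-\D)^\sigma\varphi\,dx=\int v\varphi\,dx$ follows for every non-negative $\varphi\in C_c^\infty(\R^n)$.

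The main obstacle is the tail estimate above. Using the singular-integral representation of $(-\D)^\gamma$, reduced to $\gamma\in(0,1)$ via $(-\D)^\gamma=(-\D)^{\lfloor\gamma\rfloor}(-\D)^{\gamma-\lfloor\gamma\rfloor}$ when $\gamma\geq 1$, together with the polynomial decay of $\psi$ on the support $\{|y|\geq R\}$, one obtains pointwise bounds of the form $|(-\D)^\gamma[(1-\eta_R)\psi](x)|\lesssim R^{-n-2\sigma}$ on $B_{R/2}$ and $|(-\D)^\gamma[(1-\eta_R)\psi](x)|\lesssim(1+|x|)^{-(n+2\sigma)}$ on $\R^n\setminus B_{2R}$, with a similar control in the transition annulus. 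The near-origin term contributes at most $R^{-n-2\sigma}\int_{B_{R/2}}|u|\,dx$, and a dyadic decomposition of the integrability condition $u\in L_{s_0}$ yields $\int_{B_R}|u|\,dx=o(R^{n+2\sigma})$ for every $\sigma\geq s_0$; the far-out term is dominated by the $L_\sigma$-weight and tends to zero since the region of integration exits every fixed ball. The role of the hypothesis $s_0<\tfrac12$ is that it precludes linear-or-faster growth of $u$, which would otherwise produce a non-trivial polynomial Liouville part in the Riesz representation of $u$ and destroy the clean identification $(-\D)^\sigma u=I_{\gamma-\sigma}F$.
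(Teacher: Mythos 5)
Your argument is correct in outline and takes a genuinely different route from the paper's proof. The paper constructs an explicit comparison function $v(x)=\int_{\R^n}(\Gamma(x-y)-\Gamma(y))F(y)\,dy$ (renormalized, because $\Gamma*F$ need not converge), shows via Lemma~\ref{est-Ls} that $v\in L_s(\R^n)$ for $s\geq 2s_0$, applies the Liouville-type Lemma~\ref{poly} together with $s_0<\tfrac12$ to conclude that $w:=u-v$ is a constant, and then reads off $(-\D)^\sigma u=(-\D)^\sigma v\geq 0$ from the explicit Riesz formula. You bypass the construction of $v$ and the Liouville step entirely by working at the level of test functions, exploiting $(-\D)^\gamma\psi=(-\D)^\sigma\vp$ with $\psi=I_{\gamma-\sigma}\vp$ and truncating; the growth hypothesis $s_0<\tfrac12$ enters only through $\sigma\geq s_0$ in the tail estimates, rather than through a polynomial classification. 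The technical burden correspondingly moves from the decay analysis of $v$ to a uniform-in-$R$ pointwise bound $|(-\D)^\gamma[(1-\eta_R)\psi](x)|\lesssim(1+|x|)^{-(n+2\sigma)}$ together with pointwise convergence to zero, which is the precise analogue of the paper's Lemma~\ref{convergence} for cutoffs of power functions and is the one ingredient you should state and prove carefully (including the $(-\D)^k\circ(-\D)^{\gamma'}$ reduction for $\gamma\geq 1$). Two streamlining observations: once that uniform bound is in hand, a single dominated-convergence step against the $L_\sigma$-weight of $u$ finishes — your separate near-origin dyadic estimate is redundant, since $R^{-n-2\sigma}\leq(1+|x|)^{-(n+2\sigma)}$ on $B_{R/2}$ makes the global bound already valid there; and your unrenormalized potential $v=I_{\gamma-\sigma}F$ is automatically finite a posteriori, because $\int F\psi_R\leq\int u\,(-\D)^\gamma\psi_R\to\int u\,(-\D)^\sigma\vp<\infty$ while $\int F\psi_R\nearrow\int v\vp$ by monotone convergence, so finiteness of $\int v\vp$ comes for free (the paper must renormalize because its $v$ is built from the slower-decaying kernel $\Gamma_{n,\gamma}$).
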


 In our second superharmonicity property we do not need to assume any boundedness of $u$ if it is a solution of the semi-linear equation:

\begin{thm}\label{thm-2}
Let $u\in L_\gamma(\R^n)$ be a non-negative distributional solution to \eqref{5} for some $1<p<\infty$ and $\gamma\in(0,\frac n2)$.  Assume also  \eqref{condition-f}.
 Then for every $\sigma\in(0,\gamma)$ we have $$(-\D)^\sigma u\geq 0\quad\text{in }\R^n.$$ \end{thm}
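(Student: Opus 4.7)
The plan is to deduce Theorem \ref{thm-2} from Theorem \ref{thm-1}, by upgrading the lower endpoint appearing in its conclusion from $s_0\in(0,\tfrac12)$ down to arbitrary $\sigma>0$. If $u\equiv 0$ the conclusion is trivial, so by Proposition \ref{prop:u-vanish} we may assume $u\not\equiv 0$ and therefore $p\geq\frac{n}{n-2\gamma}$. Setting $F:=fu^p\in L^1_{loc}(\R^n)$, which is non-negative, Theorem \ref{thm-1} gives $(-\D)^\sigma u\geq 0$ for every $\sigma\in[s_0,\gamma)$ whenever $u\in L_{s_0}(\R^n)$ for some $s_0\in(0,\tfrac12)$. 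Hence if one can show $u\in L_{s_0}(\R^n)$ for \emph{every} $s_0\in(0,\tfrac12)$, letting $s_0\downarrow 0$ covers the full range $\sigma\in(0,\gamma)$.

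The heart of the argument is to establish $u\in L_{s_0}(\R^n)$ for every small $s_0>0$. To this end I would exploit the fractional superharmonicity $(-\D)^\gamma u=F\geq 0$ together with $u\in L_\gamma(\R^n)$ to obtain a Riesz potential representation
\[
u(x)=c_{n,\gamma}\int_{\R^n}\frac{F(y)}{|x-y|^{n-2\gamma}}\,dy+h(x),
\]
where $h$ is $\gamma$-harmonic on $\R^n$. Since tempered $\gamma$-harmonic functions on $\R^n$ are polynomials of degree strictly less than $2\gamma$, and since $u\geq 0$ combined with $u\in L_\gamma$ rules out any polynomial contribution of positive degree (by a standard Liouville-type argument), $h$ must be a non-negative constant. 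Constants lie in $L_s(\R^n)$ for every $s>0$, while the Riesz potential $I_{2\gamma}\ast F$ of a non-negative density is itself in $L_s(\R^n)$ for every $s>0$ as soon as it is finite a.e.\ and $F$ enjoys suitable weighted integrability. This yields $u\in L_{s_0}(\R^n)$ for every $s_0>0$, completing the reduction.

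The main obstacle is the rigorous justification of the Riesz representation, which splits into two parts. First, one must verify the weighted integrability $\int_{\R^n}F(y)(1+|y|)^{-(n-2\gamma)}\,dy<\infty$, needed to make sense of $I_{2\gamma}\ast F$; the natural approach is to test equation \eqref{eq-2} against smooth cutoffs approximating $(1+|y|)^{-(n-2\gamma)}$ and to use the asymptotic behavior of $(-\D)^\gamma$ of such weights, combined with $u\in L_\gamma$. Second, one must identify the harmonic remainder as a constant, which relies on the fractional Liouville theorem together with the non-negativity of $u$. Once the representation is in place, one actually also obtains the identity
\[
(-\D)^\sigma u(x)=c_{n,\gamma-\sigma}\int_{\R^n}\frac{F(y)}{|x-y|^{n-2(\gamma-\sigma)}}\,dy\geq 0,\quad\sigma\in(0,\gamma),
\]
which yields the conclusion directly, bypassing the reduction to Theorem \ref{thm-1} altogether.
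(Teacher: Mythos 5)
Your overall strategy is the same as the paper's: write $u$ as a Riesz potential of $F=fu^p$ plus a $\gamma$-harmonic remainder, show the remainder is a constant via a Liouville-type result, and read off $(-\D)^\sigma u\geq0$ directly from the representation (your final display is in fact exactly what the paper does; the initial ``reduction to Theorem~\ref{thm-1}'' is a detour you correctly abandon). There is, however, a genuine gap in the step you flag as ``the main obstacle'': establishing $\int_{\R^n} F(y)(1+|y|)^{-(n-2\gamma)}\,dy<\infty$. Your proposal --- test \eqref{eq-2} directly against smooth cutoffs approximating $(1+|y|)^{-(n-2\gamma)}$ and use $u\in L_\gamma(\R^n)$ --- fails as stated. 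If $\vp$ agrees with $|y|^{-(n-2\gamma)}$ near infinity, then by Lemma~\ref{convergence} the decay of $(-\D)^\gamma\vp$ is only $O(|y|^{-n})$, whereas $u\in L_\gamma(\R^n)$ controls $\int u/(1+|y|^{n+2\gamma})\,dy$, which is strictly weaker; nothing a priori makes $\int_{\R^n}u\,(-\D)^\gamma\vp\,dx$ finite. The paper resolves this with the bootstrap of Lemma~\ref{u^p-estimate}: first one tests against cutoffs with the much faster decay $|y|^{-(n+\delta)}$ (here $(-\D)^\gamma\vp=O(|y|^{-n-2\gamma})$ \emph{is} controlled by $u\in L_\gamma$), obtains $\int F/(1+|y|^{n+\delta})<\infty$, converts this into improved decay of $u$ via H\"older using the full power of the right-hand side being $u^p$, and iterates; only after this does the target cutoff $\vp\sim|y|^{-(n-2\gamma)}$ become admissible. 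This iterative mechanism is the heart of Theorem~\ref{thm-2} and is not captured in your sketch.

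Two further points. First, showing the harmonic remainder is a constant requires $u\in L_s(\R^n)$ for some small $s>0$; this also comes out of the same bootstrap (via \eqref{induction}, one gets $u\in L_s$ for all $s>-\gamma/p$), not from $u\geq0$ and $u\in L_\gamma$ alone as you suggest --- nonnegativity and $L_\gamma$ membership by themselves do not rule out low-degree polynomial growth. Second, the invocation of Proposition~\ref{prop:u-vanish} to assume $p\geq\frac{n}{n-2\gamma}$ is both unnecessary (the bootstrap in Lemma~\ref{u^p-estimate} works for all $p>1$) and delicate in terms of logical ordering: in the paper, Proposition~\ref{prop:u-vanish} is proved \emph{after} Theorem~\ref{thm-2} and relies on the representation $u=v+\text{const}$ established there. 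Your argument does not actually use the conclusion of Theorem~\ref{thm-2} in proving Proposition~\ref{prop:u-vanish}, so it is not strictly circular, but it is a dependency you should not need.
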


The proofs of the above superharmonicity results will be presented in Section \ref{section:max-principle}, and rely on a bootstrap argument to improve the decay of $u$ at infinity (Section \ref{subsection:bootstrap}). Note that, if $\gamma\in\mathbb N$ we do not need the assumption $u\in L_\gamma(\R^n)$ since we get a better  bound very easily (see Lemma \ref{lemma:integer}), and this gives a new proof in the case of poly-harmonic equations, first proved in Theorem 3.1 of  \cite{WX}.

A source of inspiration for the statement of Theorem \ref{thm-2} is the following pointwise estimate from \cite{Fazly-Wei-Xu}:
\begin{equation*}
-\Delta u\geq \sqrt{\frac{2}{p+1-c_n}}|x|^{\frac{a}{2}}u^{\frac{p+1}{2}}+\frac{2}{n-4}\frac{|\nabla u|^2}{u} \quad\text{in }\mathbb R^n, \quad n>4,
\end{equation*}
for positive bounded solutions of the fourth order H\'enon
equation
\begin{equation*}
(-\Delta)^2u = |x|^au^p \quad\text{in  }\mathbb R^n,
\end{equation*}
for some $a\geq 0$ and $p>1$. This estimate implies, in particular, Theorem \ref{thm-2} for $\gamma=2$ and $\sigma=1$. However, their proof involves an iteration argument in the spirit of Moser, and it is adapted to a local problem, but not generalizable to our non-local equation.

Finally, as a consequence of our removability theorems, we obtain superharmonicity as in Theorem \ref{thm-2} in the presence of singularities:

\begin{coro}\label{cor:max-pple}
Assume that we are in the hypothesis of Theorem \ref{thm-3}, Theorem \ref{thm-4} or Theorem \ref{thm-lambda}. Then
$$(-\D)^\sigma u>0\quad \text{in } \R^n\quad \text{for every}\quad \sigma\in(0,\gamma).$$
\end{coro}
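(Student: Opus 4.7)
The plan is to apply the appropriate removability theorem to extend $u$ to a distributional solution on all of $\R^n$, then invoke Theorem \ref{thm-2} to deduce $(-\D)^\sigma u\geq 0$, and finally upgrade the inequality to strict positivity via a strong maximum principle for the fractional Laplacian. Under the hypotheses of Theorem \ref{thm-3}, \ref{thm-4}, or \ref{thm-lambda} we immediately obtain $u\in L^p_{loc}(\R^n)\cap L_\gamma(\R^n)$ and that $u$ solves $(-\D)^\gamma u=fu^p$ on all of $\R^n$ in the sense of \eqref{eq-2}. Hence the hypotheses of Theorem \ref{thm-2} are satisfied and $(-\D)^\sigma u\geq 0$ in $\R^n$ for every $\sigma\in(0,\gamma)$.

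Fix such a $\sigma$ and set $v:=(-\D)^\sigma u\geq 0$. First observe that $v$ is not identically zero: if it were, then applying $(-\D)^{\gamma-\sigma}$ would give $(-\D)^\gamma u=0$ and thus $fu^p\equiv 0$, contradicting the positivity of $f$ in \eqref{condition-f} and the non-triviality of $u$. Now pick any $\epsilon\in(0,\min(1,\gamma-\sigma))$. A second application of Theorem \ref{thm-2} at exponent $\sigma+\epsilon$, combined with the semigroup identity $(-\D)^{\sigma+\epsilon}=(-\D)^\epsilon(-\D)^\sigma$, yields $(-\D)^\epsilon v\geq 0$ distributionally. Suppose for contradiction that $v(x_0)=0$ at some $x_0\in\R^n$. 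Since $\epsilon\in(0,1)$, the singular integral representation gives
\[
(-\D)^\epsilon v(x_0)=c_{n,\epsilon}\,\mathrm{P.V.}\!\int_{\R^n}\frac{v(x_0)-v(y)}{|x_0-y|^{n+2\epsilon}}\,dy=-c_{n,\epsilon}\int_{\R^n}\frac{v(y)}{|x_0-y|^{n+2\epsilon}}\,dy\leq 0,
\]
with equality if and only if $v\equiv 0$. Since $v\not\equiv 0$, this contradicts $(-\D)^\epsilon v\geq 0$, so $v>0$ pointwise in $\R^n$.

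The main technical obstacle is justifying the pointwise evaluation in the last display and the semigroup identity $(-\D)^{\sigma+\epsilon}=(-\D)^\epsilon(-\D)^\sigma$ in the distributional class at hand. Both rely on sufficient decay and regularity of $v$, which should be extracted from the bootstrap argument of Section \ref{subsection:bootstrap} together with standard interior elliptic theory for $(-\D)^\gamma u=fu^p$, now that $u\in L^p_{loc}$ is known on all of $\R^n$.
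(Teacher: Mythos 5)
Your first two steps are exactly the paper's route: the appropriate removability theorem gives $u\in L^p_{loc}(\R^n)$ and that $u$ solves \eqref{5} distributionally on all of $\R^n$, and then Theorem~\ref{thm-2} applies. Where you diverge is in upgrading $\geq 0$ to $>0$. You attempt this via a strong-maximum-principle argument: iterate Theorem~\ref{thm-2} at $\sigma+\epsilon$, split $(-\D)^{\sigma+\epsilon}=(-\D)^\epsilon\circ(-\D)^\sigma$, and derive a pointwise contradiction at a hypothetical zero $x_0$ of $v=(-\D)^\sigma u$. That scheme is conceptually viable, but—as you yourself flag—it leaves real gaps: one must justify that $v$ is regular enough at $x_0$ for the pointwise P.V.~formula, that $v\in L_\epsilon(\R^n)$, and that the distributional semigroup identity holds in this class. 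None of these is supplied.

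The paper gets the strict inequality for free, without any maximum-principle argument, by simply reading it off the representation formula already established in the proof of Theorem~\ref{thm-2}. There one shows $u=v+\text{const}$ with $v$ the Riesz potential \eqref{new-v}, and hence
\[
(-\D)^\sigma u(x)=c(n,\sigma,\gamma)\int_{\R^n}\frac{f(y)\,u^p(y)}{|x-y|^{n-2\gamma+2\sigma}}\,dy.
\]
Since $f$ is bounded below by a positive constant and $u$ is non-trivial, the integrand is nonnegative and not a.e.\ zero, so the integral is strictly positive for every $x$. This makes the corollary an immediate consequence of the displayed formula, avoiding the pointwise evaluation and semigroup issues your route raises. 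Your argument isn't wrong in spirit, but it trades the paper's one-line observation for a harder route through a fractional strong maximum principle that you would still have to make rigorous.
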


The significance of superharmonicity becomes clear in conformal geometry. Indeed, the sign of the curvature \eqref{fractional-curvature} controls the positivity of the conformal fractional Laplacian operator $P_\gamma$, the location of the first real scattering pole and the geometry and the topology of the manifold \cite{Gonzalez-Mazzeo-Sire,Guillarmou-Qing}. It has been conjectured that, in many cases, positive $Q_\gamma$ curvature implies positive $Q_\sigma$ curvature for $\sigma\in(0,\gamma)$, at least for another metric in the same conformal class. This is precisely the result of \cite{Guillarmou-Qing} for $\gamma=1$ and any $\sigma\in(0,\gamma)$. We also recall \cite{Qing-Raske} and \cite{Zhang} for some related work when $\gamma>1.$

In all these results positivity of the scalar curvature ($\gamma=1$) is the crucial assumption, since it allows to construct a very special comparison function in the proof of superharmonicity. This is precisely the main obstruction to use the same method in other settings. Indeed, this obstruction depends on the local geometry of the manifold and it does not seem to be easily generalizable to the fractional case.  Our Theorem \ref{thm-2}, together with Corollary \ref{cor:max-pple}, hints that it is still reasonable to expect some same kind of property for $\gamma\in(0,1)$ in conformal geometry, at least for a special class of manifolds.

From another point of view, boundary blow up for fractional order equations is reasonably understood. Some references on large solutions are \cite{Abatangelo:large,Abatangelo:very-large,Abatangelo-GomezCastro-Vazquez,Grubb}.\\

Our paper is structured as follows: in Section \ref{section:distributional-solutions} we prove Theorems \ref{thm-3} and \ref{thm-4} for point or smooth singularities. The case of a general singular set $\Sigma$ is considered in Section \ref{section:non-smooth}, where we also give the necessary background on the Assouad dimension. Then, in Section \ref{section:capacity} we use some of these ideas to relate dimension to capacity and give the proof of Theorem \ref{thm:capacity}. The main bootstrap argument comes in Section \ref{section:growth}, which is the main ingredient in the proof of superharmonicity in Section \ref{section:max-principle}. The proof of Proposition \ref{prop:u-vanish} is postponed to this Section since it relies on the previous bootstrap argument. Finally, in Section \ref{Appendix} (the Appendix) we recall some basic facts on the distance function.

\section{Distributional solutions}\label{section:distributional-solutions}

Here we give the proof of Theorems \ref{thm-3} and \ref{thm-4}, when the singular set $\Sigma$ is a smooth manifold of dimension $m\geq 0$.

The fractional Laplacian is defined, for $\sigma\in(0,1)$, by the singular integral formula
\begin{equation*}
(-\Delta)^\sigma u(x)=C_{n,\sigma}P.V.\int_{\mathbb R^n} \frac{u(x)-u(y)}{|x-y|^{n+2\sigma}}\,dy
\end{equation*}
while, for higher powers, say $\sigma=k+\sigma'$, $k\in\mathbb N$, $\sigma'\in(0,1)$,
$$(-\Delta)^\sigma=(-\Delta)^{\sigma'} \circ(-\Delta)^{k}.$$
Note that the meaning of this formula is precise since we are using this on the space $C_c^\infty(\R^n)$,  or for smooth  functions  of the form $\frac{1}{|x|^q}$ on $B_1^c$, where $B_1$ is the unit ball.

Theorem \ref{thm-3} should be compared to the results in \cite{Chen-Quaas}, where the authors show that, for $\gamma\in(0,1)$, any (non-negative) classical solution to
\begin{equation*}
\left\{\begin{split}
(-\Delta)^\gamma u=u^p\quad \text{in }\Omega\setminus\{0\},\\
u=0\quad \text{in }\mathbb R^n\setminus\Omega
\end{split}\right.
\end{equation*}
is a distributional solution of
\begin{equation*}
\left\{\begin{split}
(-\Delta)^\gamma u=u^p+k\delta_0\quad \text{in }\Omega,\\
u=0\quad \text{in }\mathbb R^n\setminus\Omega,
\end{split}\right.
\end{equation*}
for some $k\geq 0$, where $\delta_0$ is the standard Dirac delta. When $p\geq \frac{n}{n-2\gamma}$, the solution extends distributionally to all $\Omega$, this is, $k=0$. In the subcritical case $p\in(1,\frac{n}{n-2\gamma})$  they characterize the asymptotics of  non-removable solutions.

 Theorem \ref{thm-3} is a restatement of the above, but using different ideas in the proof (a very delicate choice of test functions and a dyadic decomposition near the singularity). In particular, our argument contains the core for the generalization to higher dimensional singularities, and also works for any power $\gamma\in(0,\frac{n}{2})$.

\subsection{Point singularities}

 For simplicity let us assume that $\Sigma$ is a single point and $\Sigma=\{0\}$ (after all, the argument is local near each singular point).

We fix two   cut-off functions $\eta_i\in C^\infty(\R^n)$, $i=1,2$, such that $0\leq\eta_i\leq 1$ and  \begin{align*}\eta_1(x)=\left\{\begin{array}{ll}0&\quad\text{for }|x|\leq 1,\\1 &\quad\text{for }|x|\geq 2, \end{array} \right.\qquad  \eta_2 (x):=1-\eta_1(x). \end{align*}
We set
\begin{equation}\label{phieps}
\eta_\ve(x):=\eta_{1,\ve}(x)\eta_2(x), \quad \eta_{1,\ve}(x):=\eta_1\Big(\frac x\ve\Big)\text{ for }\ve>0.
\end{equation}
Let us first estimate $(-\D)^\sigma\eta_\ve$ for this cut-off.


\begin{lem}\label{lemma:estimate} Take $\eta_\ve$ as in \eqref{phieps}.
 For every $\sigma>0$ we have
 \begin{align} |(-\D)^\sigma \eta_\ve(x)|\leq \frac{C_\sigma}{\ve^{2\sigma}}\frac{1}{(1+\frac{|x|}{\ve})^{n+2\sigma}}+\frac{C_\sigma}{(1+|x|)^{n+2\sigma}},
 \quad \text{for all}\quad x\in\mathbb R^n.\label{est4.2} \end{align}
\end{lem}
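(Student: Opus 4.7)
The plan is to reduce the estimate to a single pointwise decay bound for $(-\Delta)^\sigma$ applied to a fixed smooth compactly supported function, by decomposing $\eta_\varepsilon$ into a fixed-scale cutoff and a rescaling of the same cutoff.

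First I would exploit the relation $\eta_2 = 1-\eta_1$. Since $\eta_2(x/\varepsilon) = 1-\eta_1(x/\varepsilon) = 1-\eta_{1,\varepsilon}(x)$, we get
\[
\eta_\varepsilon(x) = \eta_{1,\varepsilon}(x)\eta_2(x) = \eta_2(x) - \eta_2(x/\varepsilon)\eta_2(x).
\]
Assuming $\varepsilon\leq 1/2$, the support of $\eta_2(\cdot/\varepsilon)$ lies in $\{|x|\leq 2\varepsilon\}\subset\{|x|\leq 1\}$, where $\eta_2\equiv 1$. Therefore for $\varepsilon\leq 1/2$,
\[
\eta_\varepsilon(x) = \eta_2(x) - \eta_2(x/\varepsilon).
\]
(The complementary range $\varepsilon > 1/2$ either gives $\eta_\varepsilon\equiv 0$ when $\varepsilon\geq 2$, or leaves $\varepsilon$ in the bounded interval $[1/2,2]$, where the bound is trivial after absorbing constants.)

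Next I would invoke the scaling property $(-\Delta)^\sigma[g(\cdot/\varepsilon)](x) = \varepsilon^{-2\sigma}[(-\Delta)^\sigma g](x/\varepsilon)$, which is immediate from the singular integral definition for $\sigma\in(0,1)$ and extends to arbitrary $\sigma > 0$ via the composition rule $(-\Delta)^{k+\sigma'}=(-\Delta)^{\sigma'}\circ(-\Delta)^{k}$ stated in the text. Applying it to the decomposition yields
\[
(-\Delta)^\sigma\eta_\varepsilon(x) = (-\Delta)^\sigma\eta_2(x) - \frac{1}{\varepsilon^{2\sigma}}\bigl[(-\Delta)^\sigma\eta_2\bigr]\!\left(\tfrac{x}{\varepsilon}\right).
\]

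The proof then reduces to the single pointwise decay bound, for $\sigma>0$,
\[
\bigl|(-\Delta)^\sigma\eta_2(x)\bigr| \leq \frac{C_\sigma}{(1+|x|)^{n+2\sigma}},
\]
which is where the actual work is and is the main (though still essentially standard) obstacle. For $\sigma\in(0,1)$ it follows from the singular integral: if $|x|\geq 4$ then $\eta_2(x)=0$ and on $\operatorname{supp}\eta_2\subset B_2$ we have $|x-y|\geq |x|/2$, so $|(-\Delta)^\sigma\eta_2(x)|\leq C\int_{B_2}|x-y|^{-n-2\sigma}\,dy \leq C|x|^{-n-2\sigma}$; on the compact region $|x|\leq 4$, $(-\Delta)^\sigma\eta_2$ is continuous and bounded. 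For $\sigma=k+\sigma'$ with $k\in\mathbb N$, write $(-\Delta)^\sigma\eta_2 = (-\Delta)^{\sigma'}\phi$ with $\phi = (-\Delta)^k\eta_2\in C_c^\infty(\mathbb R^n)$, and apply the fractional case. Plugging this decay into the two-term identity for $(-\Delta)^\sigma\eta_\varepsilon$ produces exactly \eqref{est4.2}, the first summand coming from the rescaled piece $\varepsilon^{-2\sigma}[(-\Delta)^\sigma\eta_2](x/\varepsilon)$ and the second from the fixed-scale piece $(-\Delta)^\sigma\eta_2(x)$.
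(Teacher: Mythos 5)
Your proof is correct, and it takes a genuinely different route from the paper's. The paper proves the bound by treating $\eta_\varepsilon$ as a \emph{product} $\eta_{1,\varepsilon}\eta_2$: for $\sigma\in(0,1)$ it uses the bilinear Leibniz-type expansion of $(-\Delta)^\sigma(fg)$ and then separately estimates the remainder principal-value integral; for $\sigma=k+\sigma_1>1$ it introduces auxiliary functions $\Psi_{1,\varepsilon},\Psi_{2,\varepsilon}$ to rewrite $\Delta^k\eta_\varepsilon(x)-\Delta^k\eta_\varepsilon(y)$ and controls a further remainder $I_\varepsilon$ by integration by parts. You instead exploit the identity $\eta_2=1-\eta_1$ and the fact that $\operatorname{supp}\eta_2(\cdot/\varepsilon)\subset B_{2\varepsilon}\subset\{\eta_2\equiv1\}$ for $\varepsilon\le\frac12$, to convert the product into a \emph{sum} $\eta_\varepsilon=\eta_2-\eta_2(\cdot/\varepsilon)$, after which linearity and the scaling identity $(-\Delta)^\sigma[g(\cdot/\varepsilon)]=\varepsilon^{-2\sigma}[(-\Delta)^\sigma g](\cdot/\varepsilon)$ reduce everything to the single standard decay bound for $(-\Delta)^\sigma$ of a fixed $C_c^\infty$ function, uniformly across $\sigma>0$. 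Your argument is shorter, avoids both the bilinear term and the $\Psi$-bookkeeping, and covers all $\sigma>0$ in one stroke. What the paper's product-based argument buys is that it does not rely on $\eta_1$ and $\eta_2$ being complementary, which is the template actually needed in the higher-dimensional case (Lemma~\ref{lem-4.1}) where the inner and outer cutoffs live at different scales and your additive cancellation is no longer available. One minor point to flesh out in your write-up: for the intermediate range $\varepsilon\in[\frac12,2]$ one should note that $\eta_\varepsilon$ remains supported in $B_2$ with derivatives bounded uniformly in $\varepsilon$, so the second summand of \eqref{est4.2} alone suffices there, with a constant independent of $\varepsilon$.
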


\begin{proof} The claim follows trivially if $\sigma$ is an integer. For $\sigma\in (0,1)$ we can use the usual formula for $(-\Delta)^\sigma$ in terms of a singular integral to write
\begin{align*}  (-\D )^\sigma &\eta_\ve(x)\\
&=\eta_2(x)(-\D)^\sigma\eta_{1,\ve}(x)+(-\D )^\sigma\eta_2(x)+C_{n,\sigma}PV\int_{\R^n}\frac{(\eta_{1,\ve}(y)-1)(\eta_2(x)-\eta_2(y))}{|x-y|^{n+2\sigma}}\,dy\\&=
\frac{1}{\ve^{2\sigma}} \eta_{2}(x) (-\D)^\sigma\eta_1\Big(\frac x\ve\Big) +(-\D )^\sigma\eta_2(x)+C_{n,\sigma}\int_{\R^n}\frac{(\eta_{1,\ve}(y)-1)(\eta_2(x)-\eta_2(y))}{|x-y|^{n+2\sigma}}\,dy. \end{align*} This gives \eqref{est4.2} for $\sigma\in(0,1)$.\\

When $\sigma>1$ we write $\sigma=k+\sigma_1$ with  $k\in\mathbb{N}$, $0<\sigma_1<1$.  We have \begin{align*}
\D^k\eta_\ve(x)- \D^k\eta_\ve(y)&= [\D^k\eta_{1,\ve}(x)-\D^k\eta_{1,\ve}(y)]\eta_2(x) +[\D^k \eta_2(x)-\D^k\eta_2(y)]\\
&\quad +\Psi_{1,\ve}(x)+\Psi_{2,\ve}(x,y) ,\end{align*} where
\begin{equation*}
\begin{split}
&\Psi_{1,\ve}(x):=\D^k(\eta_{1,\ve}\eta_2)(x)-\eta_2(x)\D^k\eta_{1,\ve}(x)-\D^k\eta_2(x),\\ &\Psi_{2,\ve}(x,y):=\eta_2(x)\D^k\eta_{1,\ve}(y)+\D^k\eta_2(y)-\D^k(\eta_{1,\ve}\eta_2)(y).
\end{split}
\end{equation*}
Notice that   for $\ve<\frac12$ we have $\Psi_{1,\ve}\equiv 0$, and
\begin{align}\label{formula20}
 \Psi_{2,\ve}(x,y)=\left\{\begin{array}{ll}0&\quad\text{for }|x|\leq1,\, y\in\R^n,\\   (\eta_2(x)-1)\D^k\eta_{1,\ve}(y)&\quad\text{for }1\leq|x|\leq 2,\, y\in \R^n,\\
 \D^k[\eta_2(1-\eta_{1,\ve})](y)&\quad\text{for }|x|\geq2,\, y\in\R^n.
 \end{array}\right.\end{align}
Therefore,
$$(-\D)^\sigma\eta_\ve(x)=\eta_2(x)(-\D)^\sigma\eta_{1,\ve}(x)+(-\D)^\sigma\eta_2(x)+I_\ve(x),$$ where $$I_\ve(x)=(-1)^k C_{n,\sigma_1} PV \int_{\R^n}\frac{\Psi_{2,\ve}(x,y)}{|x-y|^{n+2\sigma_1}}\,dy.$$ Since the above integrand has no singularity at $\{x=y\}$ for $\ve<\frac14$ thanks to \eqref{formula20}, one can use integration by parts to deduce that $$|I_\ve(x)|\leq C\frac{\ve^n}{1+|x|^{n+2\sigma}}\quad\text{for every }x\in\R^n,$$
and this concludes the proof of the Lemma.
\end{proof}

\begin{proof}[Proof of Theorem \ref{thm-3}]
 Using the test function $\eta_\ve$ as defined in \eqref{phieps} we get from Lemma \ref{lemma:estimate} that, for a power $p\geq\frac{n}{n-2\gamma}$,
 \begin{equation}\label{formula30}
\begin{split}
 \int _{\R^n}u^p\eta_\ve \,dx&=\int_{\R^n}u(-\D)^\gamma \eta_\ve \,dx
 \\ & \leq C+\frac{C}{\ve^{2\gamma}} \int_{\R^n}\frac{u(x)}{(1+\frac{|x|}{\ve})^{n+2\gamma}}\,dx\\ &=C+\frac{C}{\ve^{2\gamma}}\left( \int_{B_{2\ve}}+\int_{B_1\setminus B_{2\ve}}+ \int_{B_1^c} \right)\frac{u(x)}{(1+\frac{|x|}{\ve})^{n+2\gamma}}\,dx
  \\& \leq  C+\frac{C}{\ve^{2\gamma}}\left( \int_{B_{2\ve}} u\,dx+ \ve^\frac{n}{p'} \left[ \int_{B_1\setminus B_{2\ve}}u^p\,dx\right] ^\frac{1}{p}+ \ve^{n+2\gamma}\right)
  \\& \leq C+\frac{C}{\ve^{2\gamma}}\int_{B_{2\ve}}u\,dx
  +C\left[ \int_{B_1\setminus B_{2\ve}}u^p\,dx\right] ^\frac{1}{p},
 \end{split}
 \end{equation}
 where $1=\frac{1}{p}+\frac{1}{p'}$. Now use that if $a_k^p\leq C_1+C_2a_k$ with $p>1$, then there exists $C_3>0$ such that $a_k^p\leq C_3$, to pass the last term above to the left hand side. We obtain
 \begin{align}  \label{estimate-4.10}  \int _{\R^n}u^p\eta_\ve \,dx& \leq C+\frac{C}{\ve^{2\gamma}}\int_{B_{2\ve}}u\,dx. \end{align}
By H\"older inequality with  $1=\frac{1}{p}+\frac{1}{p'}$ and the previous formula,  we have, for every integer $k\geq -1$,
\begin{align*}
\int_{\{\frac{\ve}{2^{k+1}}<|x|\leq \frac{\ve}{2^{k}} \}}u \,dx
&\leq  C\Big(\frac{\ve}{2^k}\Big)^\frac{n}{p'} \left( \int_{B_1\setminus B_\frac{\ve}{2^{k+1}}} u^p\,dx\right)^\frac1p\\
&\leq C \Big(\frac{\ve}{2^k}\Big)^{\frac{n}{p'} -\frac{2\gamma}{p} }   \left( \int_{B_\frac{\ve}{2^{k+1}}} u\,dx\right)^\frac1p +  C\Big(\frac{\ve}{2^k}\Big)^\frac{n}{p'} \\
&\leq  C \Big(\frac{\ve}{2^k}\Big)^{\frac{n}{p'} -\frac{2\gamma}{p} }   \left( \int_{  B_ \ve} u\,dx\right)^\frac1p+  C\Big(\frac{\ve}{2^k}\Big)^\frac{n}{p'} .\end{align*}
Since $\frac{n}{p'}-\frac{2\gamma}{p}>0$ (iff $p>\frac{n+2\gamma}{n}$, which is true as $p\geq\frac{n}{n-2\gamma}$), summing the above inequality from $k=-1$ to $\infty$, we get $$\int_{B_{2\ve}}u\,dx\leq C\ve^{\frac{n}{p'}-\frac{2\gamma}{p}}  \left( \int_{  B_ \ve} u\,dx\right)^\frac1p+C\ve^\frac{n}{p'}.$$
Using  that $a\leq C_1 a_1+C_2 a_2\leq 2\max\{C_1a_1, C_2a_2\}$ with $a_1\leq a^\frac1p$ implies that $a\leq \max\{(2C_1)^{p'}, C_2a_2\}\leq  (2C_1)^{p'}+ C_2a_2$, we get
$$\int_{B_{2\ve}}u\,dx\leq C\ve^{n-2\gamma\frac{p'}{p}}+C\ve^\frac{n}{p'}.$$ As $ n-2\gamma\frac{p'}{p}\geq 2\gamma$ and $\frac{n}{p'}\geq 2\gamma$, going back to \eqref{estimate-4.10} we get that $u^p\in L^1(B_1)$.

To finish the proof we also need to show that $u$ is a distributional solution on $\R^n$.   Basically we need to show that for every $\vp\in C_c^\infty(\R^n)$,
$$\int_{\R^n} u(-\D)^\gamma (\vp\eta_{1,\ve})\,dx\to \int_{\R^n} u(-\D)^\gamma \vp \,dx .$$
And this follows from the bound $\| (-\D)^\gamma (\vp\eta_{1,\ve})-(-\D)^\gamma\vp\|_{L^{p'}}\leq C$. For this last claim, we need to  estimate  $(-\Delta)^\gamma(\eta_{1,\ve}\varphi)$ as in  Lemma \ref{lemma:estimate}  (see also Lemma \ref{lem-4.4}).


\end{proof}

\subsection{Higher dimensional singular set}

Let $\Sigma$ be a smooth $m$ dimensional compact, closed submanifold of $\R^n$ (or a disjoint union of submanifolds with different dimensions). For $\rho>0$ small we let $\mathcal N_\rho$ to be the geodesic tubular neighborhood of radius $\rho$ around $\Sigma$ and choose Fermi coordinates in $\mathcal N_\rho$ as follows:  first we fix any local coordinate system $y=(y_1,\dots,y_m)$ on $\Sigma$. For every $y_0\in\Sigma$ there exists an orthonormal frame field $E_1,\dots,E_{n-m}$, basis of the normal bundle of $\Sigma$. Set  $N=n-m$. Then we consider the coordinate system $$\Sigma\times\R^{n-m}\ni (y,z)\to y+\sum z_iE_i(y).$$ For $|z|<4\rho$ with $\rho$ small, these generate a  well-defined coordinate system in a neighborhood of $y_0$.
 In this coordinate system the Euclidean metric has the following expansion (\cite{Mazzeo-Smale})
 $$g_{\R^n}=g_{\R^{n-m}}+g_\Sigma +O(|z|)dzdy+O(|z|)dy^2.$$


  We fix  non-negative radially symmetric smooth functions $\eta_1$ and $\eta_2$ in $\mathbb R^{n-m}$ such that \begin{align*}\eta_1(z)=\left\{\begin{array}{ll}   0&\quad\text{for }|z|\leq 1\\ 1& \quad \text{for }|z|\geq 2,\end{array}\right.  \quad \eta_2(z)=\left\{\begin{array}{ll}   1&\quad\text{for }|z|\leq 2\rho\\ 0& \quad \text{for }|z|\geq 3\rho.\end{array}\right.   \end{align*}   For $\ve>0$ small enough we set $$\eta_\ve(x):=\eta_{1,\ve}(z)\eta_2(|z|),\quad \eta_{1,\ve}(z)=\eta_1\Big(\frac{|z|}{\ve}\Big),$$ where $(y,z)\in\Sigma\times \R^{n-m} $ are the Fermi coordinates of $x$.

\begin{lem}\label{lem-4.1}
 We claim that 
 \begin{equation*}\label{inequality1}
 |(-\D)^\sigma\eta_\ve(x)|\leq \frac{C}{\ve^{2\sigma}}\frac{ \eta_2(|z|)}{(1+|z|/\ve)^{N+2\sigma}}+\frac{C}{(1+|x|)^{n+2\sigma}}\quad\text{on }\R^n.
 \end{equation*}
 \end{lem}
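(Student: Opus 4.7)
My plan is to reduce to Lemma~\ref{lemma:estimate} applied in the transverse dimension $N=n-m$, exploiting that in Fermi coordinates $\eta_\ve$ depends only on the normal variable. I would extend $\eta_\ve$ by zero outside the tube $\mathcal{N}_{3\rho}$ and split the analysis into $x\in\mathcal{N}_{4\rho}$ and its complement. When $x\notin\mathcal{N}_{4\rho}$, one has $\eta_\ve(x)=0$ and $|x-y|\geq\rho$ on $\mathrm{supp}\,\eta_\ve$, so direct integration together with compactness of $\Sigma$ yields the bound $C(1+|x|)^{-n-2\sigma}$.

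The core step is $\sigma\in(0,1)$ and $x=\Phi(y_x,z_x)\in\mathcal{N}_{4\rho}$, where $\Phi(y,z)=y+\sum z_iE_i(y)$. The Fermi Jacobian equals $1+O(|z|)$ and the metric expansion recalled before the lemma gives
$$
c\bigl(|y_x-y_y|_\Sigma^2+|z_x-z_y|^2\bigr)\leq |\Phi(y_x,z_x)-\Phi(y_y,z_y)|^2\leq C\bigl(|y_x-y_y|_\Sigma^2+|z_x-z_y|^2\bigr).
$$
Setting $F(z):=\eta_1(|z|/\ve)\eta_2(|z|)$, so that $\eta_\ve\circ\Phi(y,z)=F(z)$, I would change variables in the principal-value integral defining $(-\D)^\sigma\eta_\ve(x)$, integrate first over $y_y\in\Sigma$, and use the Fubini-type bound
$$
\int_\Sigma \frac{dy_y}{(|y_x-y_y|_\Sigma^2+r^2)^{(n+2\sigma)/2}}\leq C\,r^{-(N+2\sigma)},
$$
which follows from comparison with $\R^m$ in a local chart plus compactness of $\Sigma$. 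What remains is, up to constants and a bounded tail from outside the tube, $(-\D_{\R^N})^\sigma F(z_x)$, which Lemma~\ref{lemma:estimate} in dimension $N$ estimates precisely by the right-hand side of the claim; since $|x|$ is bounded on $\mathcal{N}_{4\rho}$, the $(1+|z_x|)^{-N-2\sigma}$ contribution is absorbed into $(1+|x|)^{-n-2\sigma}$.

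For $\sigma=k+\sigma_1$ with $k\in\N$ and $\sigma_1\in(0,1)$, I would first compute $(-\D)^k\eta_\ve$ directly in Fermi coordinates. Since $\eta_\ve$ is tangentially constant, $\D$ applied to such a function reduces to $\D_{\R^N}$ plus curvature corrections of lower order in $|z|$, so $(-\D)^k\eta_\ve$ has the same support structure, scales like $\ve^{-2k}$, and retains the top-order normal-only dependence. The first step then applies to $(-\D)^{\sigma_1}[(-\D)^k\eta_\ve]$, using the auxiliary decomposition into $\Psi_{1,\ve},\Psi_{2,\ve}$ from the higher-order part of Lemma~\ref{lemma:estimate} to handle the product $\eta_{1,\ve}\eta_2$ near $|z|\sim\ve$. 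The main obstacles are justifying the Fubini estimate over $\Sigma$ globally (not only in a single chart) and verifying that the Jacobian and distance asymptotics in Fermi coordinates are uniform up to the boundary of the tube; in the higher-order case one must additionally track the curvature corrections generated by $\D^k$ and confirm that they do not spoil the $(1+|z|/\ve)^{-N-2\sigma}$ decay.
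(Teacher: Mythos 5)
Your strategy -- change to Fermi coordinates, use that $\eta_\ve$ depends only on the normal variable $z$, integrate out the tangential $\Sigma$-variable, and reduce to Lemma~\ref{lemma:estimate} in dimension $N$ -- is genuinely different from what the paper does, and it is conceptually appealing. The paper instead works directly with the symmetrized difference $\Phi_\ve(x,\tilde x)=2\eta_\ve(x)-\eta_\ve(x+\tilde x)-\eta_\ve(x-\tilde x)$ and splits according to the size of $d(x)=|z|$ relative to $\ve$: for $d(x)\leq 4\ve$ it exploits only the quadratic cancellation $|\Phi_\ve(x,\tilde x)|\leq C|\tilde x|^2/\ve^2$ (for $|\tilde x|\leq\ve$), obtained from $\eta_\ve=\eta_1(d/\ve)$ and the elementary distance-function facts $|\nabla d|=1$, $|\nabla^2 d|\leq C/d$, with no tangential integration at all; for $4\ve\leq d(x)\leq\rho$ it observes that $\Phi_\ve(x,\cdot)$ is supported away from the singularity $\tilde x=0$, so the remaining integral over the slab $A_\ve(x)$ is absolutely convergent and a Fubini-like computation (integrating $\tilde y$ first) can be done without any principal-value subtleties.

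This is precisely where your approach needs more care than you provide. After integrating out $y_y\in\Sigma$ from the \emph{unsymmetrized} difference $F(z_x)-F(z_y)$, the resulting transverse kernel $K(z_x,z_y)$ is only $\approx c\,|z_x-z_y|^{-(N+2\sigma)}$ up to corrections coming from the $O(|z|)$ distortion of the Fermi metric, from the Jacobian, and from the curvature of $\Sigma$; in particular $K$ is not exactly even in $z_y-z_x$, so what remains is \emph{not} $(-\D_{\R^N})^\sigma F(z_x)$ but that quantity plus an error integral whose convergence near $z_y=z_x$ depends on the precise decay of the antisymmetric part of $K$. This error can be shown to be controllable (the antisymmetric part gains a factor $|z_x-z_y|+|z_x|$, which makes the error absolutely integrable against the Lipschitz bound for $F$), but you assert the reduction as an identity ``up to constants and a bounded tail'' and only flag the Jacobian/chart-uniformity issues at the very end, not the principal-value bookkeeping. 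The paper's decomposition by the size of $d(x)$ sidesteps this entirely, which is why it is the cleaner route here. The far-field case and the general structure of the higher-order case ($\sigma=k+\sigma_1$, computing $(-\D)^k\eta_\ve$ first) match the paper. In short: a legitimate alternative plan, but the central step -- the exactness of the Fubini reduction -- is a genuine gap as written, and the paper's casework on $d(x)$ vs.~$\ve$ is precisely what lets it avoid that analysis.
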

\begin{proof} We give a proof only for $\sigma\in (0,1)$.  {The proof for other values of $\sigma$ follows as in the previous section.} 
 We write \begin{align*} (-\D)^\sigma\eta_\ve(x)&=\frac12C_{n,\sigma}\int_{\R^n}\frac{2\eta_\ve(x)-\eta_\ve(x+\tilde x)-\eta_\ve(x-\tilde x)}{|\tilde x|^{n+2\sigma}}\,d\tilde x\\&=: \int_{\R^n}\frac{\Phi_\ve(x,\tilde x)}{|\tilde x|^{n+2\sigma}}\,d\tilde x.
 \end{align*}
For $x\in \R^n\setminus \mathcal N_{4\rho}$ we see that
$$\text{if } \,\,\Phi_\ve(x,\tilde x)\neq0\quad\text{then  } \tilde x\in B_\rho (x)\text{ or } -\tilde x\in B_\rho(x).$$
Therefore, $$|(-\D)^\sigma\eta_\ve(x)|\leq \frac{C}{(1+|x|)^{n+2\sigma}}\quad \text{ for }x\in\R^n\setminus \mathcal N_{4\rho}.$$
For $x\in \mathcal N_{4\rho}$ we write $$(-\D)^\sigma \eta_\ve(x)=\int_{\{|\tilde x|<\rho\}}\frac{\Phi_\ve(x,\tilde x)}{|\tilde x|^{n+2\sigma}}\,d\tilde x+\int_{\{|\tilde x|\geq\rho\}}\frac{\Phi_\ve(x,\tilde x)}{|\tilde x|^{n+2\sigma}}\,d\tilde x=:(I)+(II).$$ Clearly $|(II)|\leq C$.

Let $d=d(x)$ be the distance function from the point $x\in\R^n$ to $\Sigma$. Then, for $x\in \mathcal N_{4\rho}$  we have that $d(x)=|z|$ where $x=(y,z)$. As $\eta_2=1$ for $|z|\leq 2\rho$ (this is, in $\mathcal N_{2\rho}$), for $x\in \mathcal N_\rho$ and  $\tilde x$ small we have the following estimates on  $\Phi_\ve(x,\tilde x)$:
\begin{itemize}
\item[\emph{i})] $\Phi_\ve(x,\tilde x)=0$ \ for \ $4\ve\leq d(x)\leq \rho$ and $|\tilde x|\leq \frac12d(x)$.

To see this use that $d(x\pm \tilde x)\geq d(x)-|\tilde x|\geq 2\ve.$

\item[\emph{ii})]  For $2\ve\leq d(x)\leq \rho$ we have $$\{  \tilde x: \Phi_{\ve}(x,\tilde x)\neq 0 \}=\{  \tilde x: \pm  \tilde x  \in \mathcal N_{2\ve}-x \} =:A_\ve(x),$$
    and we will use this fact later. 

\item[\emph{iii})] For $d(x)\leq 4\ve$, $|\tilde x|\leq \ve  $ we have $$|\Phi_\ve(x,\tilde x)|\leq  |\tilde x|^2\|D^2\eta_\ve\|_{L^\infty}\leq C\frac{|\tilde x|^2}{\ve^2}.$$
    To prove the previous inequality we note that $$\eta_\ve(x)=\eta_1\Big(\frac{d(x)}{\ve}\Big),\quad |\nabla d(x)|=1 , \quad |\nabla^2d(x)|\leq \frac{C}{d(x)},$$ where the last inequality follows from the fact that $d^2$ is smooth in $\mathcal N_{4\rho}$. Therefore, as $\eta_1'(t)=0$ for $|t|\leq 1$, we get  $$D^2\eta_\ve(x)=\frac{1}{\ve^2}\eta_1''\Big(\frac{d(x)}{\ve}\Big)+\frac{1}{\ve}\eta_1'\Big(\frac{d(x)}{\ve}\Big)O(|\nabla ^2d(x)|)=O\Big(\frac{1}{\ve^2}\Big).$$
 \end{itemize}
Now, step  \emph{iii}) yields, for $d(x)\leq 4\ve$,
 \begin{align*} |(I)|\leq C\frac{1}{\ve^2}\int_{\{|\tilde x|<\ve\}}\frac{d\tilde x}{|\tilde x|^{n+2\sigma-2}}  + C\int_{\{|\tilde x|\geq \ve\}}\frac{d\tilde x}{|\tilde x|^{n+2\sigma}}  \leq \frac{C}{\ve^{2\sigma}}.\end{align*}
Finally, using \emph{i})--\emph{ii}) we have, for $4\ve\leq d(x)\leq \rho$,
\begin{align*}
 |(I)|\leq C\int_{\{|\tilde x|\geq\frac12d(x)\}\cap A_\ve(x)} \frac{d\tilde x}{|\tilde x|^{n+2\sigma}}.
 \end{align*}
It remains to estimate the above integral.

In the case that $\mathcal N_{2\ve}$ is  simple type, that is,  of the form  $\{(y,z): |y|< 1,\, |z|<2\ve\}$, then the above integral can be controlled as follows: write $x=(y_0,z_0)$, $\tilde x=(\tilde y, \tilde z)$, so that $|\tilde x|^2=|\tilde y|^2+|\tilde z|^2$, and $\pm \tilde x\in \mathcal N_{2\ve}-x $ is equivalent to $|y_0\pm\tilde y|<1$,  $|z_0\pm \tilde z|<2\ve$, which gives $|\tilde z|\geq \frac12|z_0|$ as $|z_0|\geq 4\ve$. Therefore  (just take the plus sign)
 \begin{align*}
 |(I)|&\leq C \int_{\{|z_0+ \tilde z|<2\ve\}} \int_{\{|y_0+\tilde y|<1\}} \frac{d\tilde yd\tilde z}{(|\tilde y|+|\tilde z|)^{n+2\sigma}} \\
 &\leq  C \int_{\{|z_0+ \tilde z|<2\ve\}} \int_{\{|\tilde y|<2\}} \frac{d\tilde y}{(|\tilde y|+|\tilde z|)^{n+2\sigma}}\,d\tilde z.
 \end{align*}
Making the change of variables $\bar y=\tilde y/|\tilde z|$ in the inside integral,
 \begin{align*}
 |(I)|&\leq C \int_{\{|z_0+ \tilde z|<2\ve\}} \frac{1}{|\tilde z|^{N+2\sigma}}\int_{\R^m}\frac{d\bar y}{(1+|\bar y|)^{n+2\sigma}}\,d\tilde z,
 \\&\leq C \int_{\{|z_0+ \tilde z|<2\ve\}} \frac{1}{|\tilde z|^{N+2\sigma}}\, d\tilde z \\ &\leq C \frac{\ve^N} {|z_0|^{N+2\sigma}},\qquad \text{as } |z_0|\leq 2|\tilde z| \\ &\approx \frac{1}{\ve^{2\sigma}}\frac{1}{(1+\frac{d(x)}{\ve})^{N+2\sigma}}. \end{align*}

If $\mathcal N_{2\epsilon}$ is not of simple type, we proceed as follows. First we cover $\Sigma$ by a finite number of small enough balls and write the metric $g_\Sigma$ in normal coordinates. A neighborhood of $\Sigma\ni q$ is then identified with a  neighborhood in $\mathbb R^{m}\ni 0$ with the metric
$$g_\Sigma=dy^2+O(|y|^2)dy^2.$$
Then we can reduce to the previous type just taking into account the $O(|y|^2)$ error.
 \end{proof}

\begin{proof}[Proof of Theorem \ref{thm-4}]

The proof is very similar to that of Theorem \ref{thm-3}. Here we only give a sketch.

Using the test functions in Lemma \ref{lem-4.1} we have, similarly to \eqref{formula30},
$$\|(-\D)^\gamma\eta_\ve\|^{p'}_{L^{p'}(\R^n)}\leq C+\frac{C}{\ve^{2\gamma p'}}\int_{\mathcal N_{\rho}}\frac{dzdy}{(1+\frac{|z|}{\ve})^{(N+2\gamma)p'}}\leq C+C\ve^{N-2\gamma p'}.$$
Using this one obtains
$$\int_{\R^n}u^p\eta_\ve\,dx\leq C+\frac{C}{\ve^{2\gamma}}\int_{\mathcal N_{2\ve}}u\, dx +C\left(\int_{\mathcal N_{\rho}\setminus \mathcal N_{2\ve}}u^p\,dx\right)^\frac{1}{p}. $$
Hence,
$$\int_{\R^n}u^p\eta_\ve\,dx\leq C+\frac{C}{\ve^{2\gamma}}\int_{\mathcal N_{2\ve}} u \,dx,$$
which is analogous to  \eqref{estimate-4.10}. Since $|\mathcal N_r|\approx r^N$ for $r>0$ small, one can proceed as before, taking a dyadic sequence of distances to $\Sigma$. 
\end{proof}

\section{The non-smooth setting}\label{section:non-smooth}

The Assouad dimension was introduced in \cite{Assouad1,Assouad2} (see also \cite{Luukkainen} for its basic properties). It possesses all the properties any reasonable dimension definition must have. In particular, it is similar to the more standard  Minkowski dimension, but it takes into account all scales (\cite{KJV}).

We will not need the complete definition of Assouad dimension, but just property  \eqref{measure} below for tubular neighborhoods taken from \cite{KJV}. In this paper it is mentioned that the estimate \eqref{measure} also holds in terms of the more usual Minkowski dimension but it is not proved explicitly, so we have decided to keep the original Assouad dimension in our statements.

Its precise definition is as follows: if $(X,d)$ is a doubling metric space, there is a constant $C\geq 1$ such that each ball $B_R(x)$ can be covered by at most $C(r/R)^{-s}$ balls of radius $r$ for all $0<r<R<\diam(X)$, where $s=\log_2 N$. Obviously, this could be true for smaller values of $s$. The infimum of such admissible exponents $s$ is called the upper Assouad dimension of $X$. Considering the restriction metric, this definition extends to all subsets of $X$. The upper Assouad dimension of $E\subset X$ is denoted by $\overline\dim_A(E)$. In the literature, the upper Assouad dimension is usually simply known as the Assouad dimension of $E$, and we will denote by ${\bf d}$.


Now we look at the size of a tubular neighborhood $\mathcal N_r$.
Let $\Sigma$ be a compact set in $\R^n$ which has the following property: For some $\lambda>0$ there exists $C>0$ such that \begin{align} \label{measure}  \HH^{n-1}(\pa\NN_r\cap B)\leq C r^{n-1}\left(\frac{r}{R} \right)^{-\lambda}, \end{align} for every ball $B$ of radius $R\in (0,\diam(\Sigma))$ centered at $\Sigma$ and for every $r\in(0,R)$. Here, $\HH^s$ denotes the $s$-dimensional Hausdorff measure.
It has been shown in \cite{KJV} that \eqref{measure} holds  for every $\lambda$  bigger than the  Assouad dimension of $\Sigma$.  \\

To prove  Theorem \ref{thm-lambda} we would like to reproduce the arguments in the previous section. Nevertheless, since the distance function to $\Sigma$ is not smooth any longer, we cannot use it to construct a cutoff. Instead, we fix a non-negative function $\rho\in C^\infty_c(B_1)$ such that $\int_{\R^n}\rho \,dx=1$. Setting $\rho_\ve(x)=\frac{1}{\ve^n}\rho(\frac x\ve)$, we define
\begin{align}\label{def-eta-ve} \eta_\ve(x):=1-\int_{\NN_{2\ve}}\rho_\ve(x-y)\,dy. \end{align}
Then $\eta_\ve\in C^\infty(\R^n)$ is non-negative, and it satisfies
\begin{equation}\label{eta-eps}\eta_\ve=1\quad\text{on }\NN_{3\ve}^c\quad\text{and }\eta_\ve=0\quad\text{on }\NN_{\ve}.\end{equation}
 Moreover, $$|\nabla^j\eta_\ve|\leq \frac{C}{\ve^j}\quad\text{for }j=1,2,\dots.$$


\begin{lem} \label{lem-4.4}
Assume that \eqref{measure} holds for some $0<\lambda<n$.  Let $\vp\in C_c^\infty(\R^n)$. Then, setting $\vp_\ve=\vp \eta_\ve$ we have  for every $\sigma>0$,
\begin{align*} (-\D)^\sigma \vp_\ve(x)= \eta_\ve(x)(-\D)^\sigma\vp(x)+I_\ve(x),\end{align*}
where $$|I_\ve(x)|\leq C\frac{1}{\ve^{2\sigma}}\frac{\chi_{\NN_1}(x)}{(1+\frac{d(x)}{\ve})^{n+2\sigma-\lambda}}
+C\ve^{n-\lambda}\frac{\chi_{\NN_1^c}(x)}{d(x)^{n+2\sigma}}.$$
Here $\chi_A$ denotes the characteristic function of the set $A$.
\end{lem}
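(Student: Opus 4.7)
The plan is to mirror the strategy of Lemmas \ref{lemma:estimate} and \ref{lem-4.1}, adapted to the less regular cutoff $\eta_\ve$ and with \eqref{measure} playing the role that Fermi coordinates played in the smooth cases. I focus first on $\sigma\in(0,1)$; the case $\sigma\geq 1$ is handled as in Lemma \ref{lemma:estimate} by writing $\sigma=k+\sigma'$ with $\sigma'\in(0,1)$ and expanding $\D^k(\vp\eta_\ve)$ via Leibniz, producing only cross-terms supported in $\NN_{3\ve}\setminus\NN_\ve$ that can be absorbed by the same machinery. The two algebraic splittings of $(\vp\eta_\ve)(x)-(\vp\eta_\ve)(y)$ give two equivalent formulas for $I_\ve$:
\begin{equation*}
I_\ve(x)=C_{n,\sigma}\,\mathrm{P.V.}\int_{\rn}\frac{\vp(y)(\eta_\ve(x)-\eta_\ve(y))}{|x-y|^{n+2\sigma}}\,dy \qquad (\ast)
\end{equation*}
and, from the symmetric splitting,
\begin{equation*}
I_\ve(x)=\vp(x)(-\D)^\sigma\eta_\ve(x)-C_{n,\sigma}\int_{\rn}\frac{(\vp(x)-\vp(y))(\eta_\ve(x)-\eta_\ve(y))}{|x-y|^{n+2\sigma}}\,dy. \qquad (\ast\ast)
\end{equation*}
From \eqref{measure} and the coarea formula I first extract the volume estimate $|\NN_\ve\cap B(x_0,R)|\leq CR^\lambda\ve^{n-\lambda}$ for $x_0\in\Sigma$ and $0<\ve<R$, which by compactness of $\Sigma$ specializes to $|\NN_\ve|\leq C\ve^{n-\lambda}$.

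I then split by $r:=d(x)$. When $r\leq 3\ve$ the target reads $|I_\ve|\leq C\ve^{-2\sigma}$ and I obtain it from $(\ast\ast)$: the first summand is controlled via the symmetric second-difference representation of $(-\D)^\sigma\eta_\ve$, cutting the $h$-integral at $|h|=\ve$ and using $\|D^2\eta_\ve\|_\infty\leq C\ve^{-2}$; the bilinear integrand is bounded by combining $|\vp(x)-\vp(y)|\leq C\min(1,|x-y|)$ with $|\eta_\ve(x)-\eta_\ve(y)|\leq \min(1,C|x-y|/\ve)$ and splitting again at $|x-y|=\ve$. When $r>3\ve$, \eqref{eta-eps} forces $\eta_\ve(x)=1$, so $(\ast)$ reduces to
\begin{equation*}
I_\ve(x)=-C_{n,\sigma}\int_{\NN_{3\ve}}\frac{\vp(y)(1-\eta_\ve(y))}{|x-y|^{n+2\sigma}}\,dy.
\end{equation*}
For $r\leq 1$ I dyadically decompose $\NN_{3\ve}\cap\mathrm{supp}\,\vp$ into annuli $A_k=\{y:2^kr\leq|x-y|<2^{k+1}r\}$, $k\geq 0$, and estimate $|A_k\cap \NN_{3\ve}|\leq|B(x_0,2^{k+2}r)\cap\NN_{3\ve}|\leq C(2^kr)^\lambda\ve^{n-\lambda}$ for $x_0\in\Sigma$ a nearest point to $x$. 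Summing the resulting geometric series, which converges since $\lambda<n<n+2\sigma$, yields $|I_\ve(x)|\leq C\ve^{n-\lambda}r^{\lambda-n-2\sigma}$, matching the first summand of the claim because $(1+r/\ve)^{n+2\sigma-\lambda}\asymp(r/\ve)^{n+2\sigma-\lambda}$ in this regime. For $r\geq 1$, the cruder bound $|x-y|\geq r/2$ together with the global $|\NN_{3\ve}|\leq C\ve^{n-\lambda}$ produces the second summand.

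The main technical obstacle is the overlap region $r\asymp\ve$, where neither identity collapses cleanly: in $(\ast)$ one must keep the full difference $\eta_\ve(x)-\eta_\ve(y)$ rather than just $1-\eta_\ve(y)$, and in $(\ast\ast)$ the factor $(-\D)^\sigma\eta_\ve(x)$ demands a careful symmetric analysis precisely because $\eta_\ve$ transitions there. The remedy is to use $(\ast\ast)$ for $r\leq 3\ve$ and $(\ast)$ for $r>3\ve$, and to verify that both branches produce the same behavior on the transition zone $r\asymp\ve$; this consistency is what glues the piecewise estimates into the uniform bound claimed in the lemma.
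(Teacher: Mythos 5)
Your overall strategy matches the paper's: a two-regime analysis split by $d(x)$ versus $\ve$, using the ``$\vp(y)(1-\eta_\ve(y))$ over $|x-y|^{n+2\sigma}$'' representation of $I_\ve$ in the far region together with a dyadic volume decomposition fed by \eqref{measure}, and a direct second-difference bound $\lesssim \ve^{-2\sigma}$ in the near region. Your annuli $A_k=\{2^k r\leq|x-y|<2^{k+1}r\}$ centered at $x$ are an equivalent variant of the paper's dyadic balls $B_{2^k R}(\tilde x)$ about the nearest point $\tilde x\in\Sigma$, and extracting the intermediate volume estimate $|\NN_\ve\cap B(x_0,R)|\leq CR^\lambda\ve^{n-\lambda}$ from coarea plus \eqref{measure} is equivalent to applying \eqref{measure} under the coarea integral as the paper does. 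The treatment of $\sigma>1$ via Leibniz on $\D^k(\vp\eta_\ve)$ followed by integration by parts on the nonsingular tail is also the same route as the paper's formula \eqref{formula40}.

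There is, however, a concrete step that fails as stated: the threshold $r>3\ve$ for switching to $(\ast)$ with the annulus decomposition. For $3\ve<r<6\ve$ (that is, $r$ only slightly larger than $3\ve$), the support $\NN_{3\ve}$ of $1-\eta_\ve$ contains points $y$ with $|x-y|=r-3\ve$, which can be arbitrarily smaller than $r$. Consequently $\bigcup_{k\geq0}A_k$ does not cover $\NN_{3\ve}$, and the natural attempt to extend the sum to negative $k$ produces a series whose dominant term is of order $\ve^{n-\lambda}r^\lambda(r-3\ve)^{-(n+2\sigma)}$, which blows up as $r\to3\ve^+$ rather than giving the desired $\ve^{-2\sigma}$. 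Your final paragraph recognizes a ``transition zone'' issue but the proposed remedy (asserting the two branches match at $r\asymp\ve$) does not by itself repair the non-coverage. The correct fix, which is exactly what the paper does, is to move the dividing line to $d(x)\geq10\ve$ (any constant larger than $6\ve$ works): then $|x-y|\geq d(x)-3\ve\geq d(x)/2$ on $\NN_{3\ve}$, the annuli starting from $|x-y|\gtrsim r$ do cover $\NN_{3\ve}$, and the geometric sum converges cleanly; for $d(x)\leq10\ve$ one uses your $(\ast\ast)$ bound (or, as the paper does, bounds $(-\D)^\sigma\vp_\ve$ directly by $C\ve^{-2\sigma}$ via the symmetric second-difference formula and notes that $\eta_\ve(-\D)^\sigma\vp$ is bounded). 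With this single quantitative adjustment, the rest of your argument goes through and gives the stated bound.
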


\begin{proof}
Given $\sigma>0$, let $k$ be the integer part of $\sigma$, that is,   $\sigma=k+\sigma'$ with  $\sigma'\in(0,1)$ and $k\in\{0\}\cup\N$.
First we consider the case when $d(x)\geq 10\ve$ (recall \eqref{eta-eps} here). Then we have
\begin{equation}\label{formula40}
\begin{split}
(-\D)^\sigma \vp_\ve(x)&
=C_{n,\sigma}\int_{\R^n}\frac{\eta_\ve(x)[(-\D)^k\vp(x)-(-\D)^k\vp(y)]+(-\D)^k[\vp(y)(1-\eta_\ve(y))]}{|x-y|^{n+2{\sigma'}}}\,dy\\
&= \eta_\ve(x)(-\D)^\sigma \vp(x)+C_{n,\sigma}\int_{\R^n}\frac{ (-\D)^k[\vp(y)(1-\eta_\ve(y))]}{|x-y|^{n+2{\sigma'}}}\,dy \\
&= \eta_\ve(x)(-\D)^\sigma \vp(x)+C\int_{\R^n}\frac{ \vp(y)(1-\eta_\ve(y))}{|x-y|^{n+2{\sigma}}}\,dy  \\
 &=: \eta_\ve(x)(-\D)^\sigma \vp(x)+I_\ve(x),
\end{split}  \end{equation}
  where the second last inequality follows by  integration by parts. Notice that the integrand is not singular at $y=x$ as the function $1-\eta_\ve$ is supported in $\NN_{3\ve}$.

Next we  estimate $I_\ve$ for $10\ve\leq d(x)\leq 1$.
 We have by the co-area  formula (see e.g. \cite[Section 3.4.3]{Evans-Gariepy})
\begin{align}\label{equ-3.5}  | I_{\ve}(x)| &\leq C \int_{\NN_{3\ve}}\frac{dy}{|x-y|^{n+2\sigma}}
= C\int_{0}^{3\ve}\int_{\pa\NN_r} \frac{d\HH^{n-1}(y)}{  |x-y| ^{n+2\sigma}} \,dr.
\end{align}
Note here that the distance function to $\Sigma$ is a 1-Lipschitz function even if $\Sigma$ is very bad. In particular, by Rademacher's theorem, it is differentiable \emph{a.e.} with $|\nabla d|=1$. Although these facts are well known, we provide a short proof in the Appendix.

Let $\tilde x\in \Sigma$ be such that it minimizes the distance of $x$ from $\Sigma$, that is,  $d(x)=|x-\tilde x|=:R$.
Then for $0<r\leq 3\ve$ we have
\begin{align*}\pa\NN_r\subset (\pa\NN_r\cap B_R(\tilde x))\bigcup_{k\geq1}\left\{\pa\NN_r\cap({\overline B_{ 2^kR}(\tilde x)\setminus B_{2^{k-1}R}(\tilde x)})\right\}.\end{align*}
Notice that
\begin{align*} \label{R-dx}|x-y|\geq C 2^k R\quad \text{for }y\in  \pa\NN_r\cap({\overline B_{ 2^kR}(\tilde x)\setminus B_{2^{k-1}R}(\tilde x)}),\quad k\geq 1,  \end{align*}
and also
$$|x-y|\geq C R\quad\text{for }y\in\pa\NN_r\cap \overline B_R(\tilde x).$$
This, and \eqref{measure} imply that \eqref{equ-3.5} can be estimated by
\begin{align*}
 | I_{\ve}(x)| &  \leq C   \sum_{k\geq 0} \frac{1}{(2^k R)^{n+2\sigma}}\int_{0}^{3\ve} r^{n-1} \left(\frac{r}{2^kR}\right)^{-\lambda}\,dr   \\
 &\leq C \frac{\ve^{n-\lambda}} {R^{n+2\sigma-\lambda}}\sum_{k\geq 0} \frac{1}{(2^{n+2\sigma-\lambda})^k}   
 \\&\leq C\frac{\ve^{n-\lambda}}{d(x)^{n+2\sigma-\lambda}}.
  \end{align*}
 The above proof also shows that $|\NN_{3\ve}|\leq C \ve^{n-\lambda}$. Therefore, as $|x-y|\geq C d(x) $ for $d(x)\geq 1$, we easily get that
 $$| I_{\ve}(x)|\leq C  \frac{1}{d(x)^{n+2\sigma}}|\NN_{\ve}|\leq C  \frac{\ve^{n-\lambda}}{d(x)^{n+2\sigma}}\quad\text{for }d(x)\geq 1.$$

 Finally, we treat the case $d(x)\leq 10\ve$. As the term $\eta_\ve(-\D)^\sigma\vp$ is bounded, instead of estimating $I_\ve$ we estimate the term $(-\D)^\sigma\vp_\ve$ in \eqref{formula40}.
  We  have
 \begin{align*} (-\D)^\sigma\vp_\ve(x)=\frac12C_{ {n},\sigma}\int_{\R^n}\frac{2(-\D)^k\vp_\ve(x)-(-\D)^k\vp_\ve (x+h)-(-\D)^k\vp_\ve(x-h)}{|h|^{n+2\sigma'}}\,dh.\end{align*}
 Since the integrant is bounded by $$\frac{C}{\ve^{2k}|h|^{n+2\sigma'}}\min\left\{ \frac{|h|^2}{\ve^2} ,1\right\},$$
 for a constant $C$ depending on $\vp$,
 one easily obtains $$|(-\D)^\sigma\vp_\ve(x)|\leq C\frac{1}{\ve^{2\sigma}}\quad\text{for }d(x)\leq 10\ve.$$
 This concludes the proof of the Lemma.

\end{proof}

\begin{proof}[Proof of Theorem \ref{thm-lambda}]   We choose  $\lambda>\bf d$ but very close to $\bf d$ so that $n-2\gamma p'\geq \lambda$ (equivalently, $p\geq \frac{n-\lambda}{n-\lambda-2\gamma}$).

  We fix $\vp\in C_c^\infty(\R^n)$ such that $\vp\geq 0$ and $\vp\equiv 1$ on $\NN_1$.
 Using the test function $\vp_\ve:=\vp\eta_\ve$   in \eqref{singular-equation}, where $\eta_\ve$ as defined in \eqref{def-eta-ve},  and together with Lemma \ref{lem-4.4} we get
 \begin{equation} \label{formula50}
\begin{split}
 \int _{\R^n}u^p\vp_\ve \,dx&=\int_{\R^n}u(-\D)^\gamma \vp_\ve \,dx
 \\ & \leq C+\frac{C}{\ve^{2\gamma}} \int_{\NN_1}\frac{u(x)}{(1+\frac{d(x)}{\ve})^{n+2\gamma-\lambda}}\,dx\\ &=C+\frac{C}{\ve^{2\gamma}}\left\{ \int_{\NN_{3\ve}}+\int_{\NN_1\setminus \NN_{3\ve}}  \right\}\frac{u(x)}{(1+\frac{d(x)}{\ve})^{n+2\gamma-\lambda}}\,dx.
 \end{split}
 \end{equation}
Set $1=\frac{1}{p}+\frac{1}{p'}$. Note that, again by the co-area formula, and the fact that
 $|\partial\NN_r|\leq C r^{n-\lambda-1}$ for $r\leq 1$ by \eqref{measure},
    \begin{align}   \int_{\NN_1\setminus \NN_{3\ve}}\frac{dx}{(1+\frac{d(x)}{\ve})^{p'(n+2\gamma-\lambda)}} =\int_{3\ve}^1\int_{\partial\NN_r}\frac{d\mathcal H^{n-1}(x)}{(1+\frac{d(x)}{\ve})^{p'(n+2\gamma-\lambda)}} \,dr\leq C \ve^{n-\lambda}.\label{est-Ive-3.8}
   \end{align}
Using  H\"older inequality in the last term in \eqref{formula50} and substituting the above expression, we obtain
 \begin{equation*}\label{estimate-4.6}
 \begin{split}
 \int _{\R^n}u^p\vp_\ve \,dx&\leq  C+\frac{C}{\ve^{2\gamma}}\left\{\int_{\NN_{3\ve}} u\,dx+ \ve^\frac{n-\lambda}{p'} \left( \int_{\NN_1\setminus \NN_{3\ve}}u^p\,dx\right) ^\frac{1}{p} \right\}
  \\& \leq C+\frac{C}{\ve^{2\gamma}}\int_{\NN_{3\ve}}u\,dx +C\left( \int_{\NN_1\setminus \NN_{3\ve}}u^p\,dx\right) ^\frac{1}{p},
 \end{split}
 \end{equation*}
where we have used that  $\lambda\leq n-2\gamma p'$.
As $\vp_{\ve}=1$ on $\NN_1\setminus \NN_{3\ve}$,   we deduce that
 \begin{equation*} \int_{\NN _1\setminus \NN_{3\ve}}u^p\,dx\leq C+ \frac{C}{\ve^{2\gamma}}\int_{\NN_{3\ve}}u\,dx.
 \end{equation*}
 Once  this main estimate has been obtained, proceeding as in the previous subsections one can prove that $u\in L^p_{loc}(\R^n)$.\\

Next we show that $u$ is a distributional solution in $\R^n$:
 for any  $\vp\in C_c^\infty(\R^n)$, taking $\vp_\ve:=\vp\eta_\ve$ as a test function we obtain \begin{align*} \int_{\R^n} u^p\vp_\ve\,dx=\int_{\R^n}u\eta_\ve(-\D)^\gamma \vp\,dx +\int_{\R^n} uI_\ve(x)\,dx, \end{align*} where $I_\ve$ is as in Lemma \ref{lem-4.4} with $\sigma=\gamma$.
 It follows that $$\int_{\R^n} u^p\vp_\ve\,dx\to \int_{\R^n} u^p\vp\,dx\quad\text{and  } \quad \int_{\R^n}u\eta_\ve(-\D)^\gamma \vp\,dx\to \int_{\R^n}u(-\D)^\gamma \vp\,dx$$
 as $\ve\to 0$ thanks to the above bounds.


  Since $\lambda\leq n-2\gamma p'$, from Lemma \ref{lem-4.4} with $\sigma=\gamma$ we get that $\|I_\ve\|_{L^{p'}(\NN_1)}\leq C$ independently of $\ve$ (the proof is similar to \eqref{est-Ive-3.8}, using the co-area formula).  Moreover,  as $u\in L_{\gamma}(\R^n)$, we have $$\lim_{\ve\to0}\int_{\NN_\delta^c}u(x) |I_\ve(x) |\,dx=0\quad\text{for every  }\delta>0.$$
 Hence, for every $\delta>0$,
    \begin{align*}
  \lim_{\ve\to0}\int_{\R^n} u(x)|I_\ve (x)|\,dx=
  \lim_{\ve\to0}  \left(  \int_{\NN_\delta}+\int_{\NN_\delta^c}\right) u(x)| I_\ve (x)|\,dx \leq \|u\|_{L^p(\NN_\delta)}\|I_\ve\|_{L^{p'}(\NN_\delta)},
  \end{align*}
  uniformly in $\delta$. Taking $\delta\to 0$ we obtain
$$   \lim_{\ve\to0}\int_{\R^n} u(x)|I_\ve (x)|\,dx=0.$$
  Thus, $u$ is a distributional solution  in $\R^n$.
\end{proof}

Finally, for Remark \ref{remark:with-corners}, assume that $\Sigma$ is a smooth manifold with corners. Since we are simply using estimate \eqref{measure} and not the full machinery of Assouad dimension, our proof includes this case as well. More generally, if \eqref{measure} holds for some compact set $\Sigma$ and $\lambda>0$, and $u\geq 0 $ is a solution to \eqref{singu-13}  with  $p\geq \frac{n-\lambda}{n-{\lambda}-2\gamma}$, then $u$ is a distributional solution in $\R^n$.

\section{Capacity}\label{section:capacity}

Here we verify Theorem \ref{thm:capacity}. The fractional capacity of order $\gamma$ of $\Sigma\subset\mathbb R^n$ is defined by  $$\capacity_\gamma(\Sigma):=\inf\left\{ \int_{\R^n}|(-\D)^\frac \gamma 2\vp|^2\,dx:\,\vp\in C_c^\infty(\R^n),\,\vp\geq 1\text{ on }\Sigma \right\}.$$
The relation between fractional capacity and Hausdorff dimension was studied in \cite{Jin-Queiroz-Sire-Xiong}, where they provided an equivalent notion of capacity in terms of the extension problem for the fractional Laplacian:

\begin{prop}
[\cite{Jin-Queiroz-Sire-Xiong}]
Assume that $\sigma\in(0,1)$ and let $\Sigma\subset\R^n$ be a compact set.
\begin{itemize}
\item[\emph{i.}] If $\mathcal H^{n-2\sigma}(\Sigma)<\infty$, then $\capacity_\sigma(\Sigma)=0$.
\item[\emph{ii.}] If $\capacity_\sigma(\Sigma)=0$, then $\mathcal H^s(\Sigma)=0$ for $s>n-2\sigma$. In particular, the Hausdorff dimension of $\Sigma$ is less or equal to $n-2\sigma$.
\end{itemize}
\end{prop}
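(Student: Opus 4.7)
My plan is to base both parts on the classical capacity--energy duality
\begin{equation*}
\capacity_\sigma(\Sigma)>0 \iff \exists \text{ probability measure } \mu \text{ supported in } \Sigma \text{ with } E_\sigma(\mu):=\iint\frac{d\mu(x)\,d\mu(y)}{|x-y|^{n-2\sigma}}<\infty.
\end{equation*}
This equivalence follows from Plancherel, since $\int|(-\Delta)^{\sigma/2}\vp|^2\,dx=c_{n,\sigma}\int|\xi|^{2\sigma}|\hat\vp(\xi)|^2\,d\xi$ and the Riesz $\sigma$-potential has Fourier symbol $|\xi|^{-2\sigma}$, so the dual variational problem is the minimization of $E_\sigma(\mu)$ over probability measures carried by $\Sigma$. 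Once this duality is in place, each part reduces to checking when a measure of prescribed mass--growth has finite Riesz energy.

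For part (ii), I would argue by contrapositive. If $\mathcal H^s(\Sigma)>0$ for some $s>n-2\sigma$, Frostman's lemma produces a nontrivial Radon measure $\mu$ supported in $\Sigma$ with $\mu(B_r(x))\leq Cr^s$ for all $x,r$. Integration by parts (layer cake) gives
\begin{equation*}
U_\mu(x):=\int\frac{d\mu(y)}{|x-y|^{n-2\sigma}}=(n-2\sigma)\int_0^\infty r^{2\sigma-n-1}\mu(B_r(x))\,dr,
\end{equation*}
and the integrand is controlled by $C r^{s-(n-2\sigma)-1}$ near $r=0$ (integrable because $s-(n-2\sigma)>0$) and by $\mu(\Sigma)\,r^{2\sigma-n-1}$ near infinity, so $U_\mu$ is uniformly bounded. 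Integrating once more against $\mu$ yields $E_\sigma(\mu)<\infty$, hence $\capacity_\sigma(\Sigma)>0$ by duality, contradicting the hypothesis; the Hausdorff dimension bound is then immediate.

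For part (i), I would again reason contrapositively: if $\capacity_\sigma(\Sigma)>0$, duality supplies a probability measure $\mu$ on $\Sigma$ with $E_\sigma(\mu)<\infty$, so $U_\mu(x)<\infty$ for $\mu$-a.e.\ $x$. The same integration-by-parts identity forces $\mu(B_r(x))/r^{n-2\sigma}\to 0$ as $r\to 0$ at every such $x$, since otherwise $\int_0^{r_0} r^{2\sigma-n-1}\mu(B_r(x))\,dr$ would diverge. Thus the upper $(n-2\sigma)$-density of $\mu$ vanishes $\mu$-a.e., and a standard $5r$-covering/density comparison then yields $\mu(E)=0$ for every Borel set $E$ with $\mathcal H^{n-2\sigma}(E)<\infty$. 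Applied to $E=\Sigma$, this contradicts $\mu(\Sigma)=1$, forcing $\mathcal H^{n-2\sigma}(\Sigma)=\infty$.

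The main obstacle will be the capacity--energy duality itself, which requires Fourier analysis combined with a min--max (or weak-$*$ compactness) argument on the space of probability measures concentrated in $\Sigma$. The density-versus-Hausdorff-measure comparison in part (i) is a standard tool from geometric measure theory, but the critical behaviour at the endpoint $s=n-2\sigma$ forces one to use the \emph{vanishing} of the upper density rather than a mere bound on it, and this sharp vanishing is the subtle but essential gain coming from the finiteness of $E_\sigma(\mu)$.
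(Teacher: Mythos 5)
The paper does not supply a proof of this proposition: it is quoted verbatim from Jin--de~Queiroz--Sire--Xiong, who phrase everything through the Caffarelli--Silvestre extension and the associated weighted capacity, so there is no internal argument to compare yours against. Your route is the classical Riesz-potential one, and it is sound. Both halves hinge on the identification that $\capacity_\sigma(\Sigma)=0$ iff no nontrivial Radon measure supported in $\Sigma$ has finite Riesz $\sigma$-energy; this is the genuine input, and the Plancherel/min--max sketch you give is only the skeleton of it. Producing the equilibrium measure by weak-$*$ compactness and lower semicontinuity of the energy, and passing between $C_c^\infty$ competitors and the natural homogeneous Sobolev class, take real work and deserve a citation (Landkof, or Adams--Hedberg) rather than a one-line re-derivation. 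Granting the duality, part (ii) is the standard Frostman argument and is correct. Part (i) is the more delicate half, and you correctly flag the key point: convergence of $\int_0^1 r^{2\sigma-n-1}\mu(B_r(x))\,dr$ at $\mu$-a.e.\ $x$ together with the monotonicity of $r\mapsto\mu(B_r(x))$ forces the genuine limit $\mu(B_r(x))/r^{n-2\sigma}\to0$, not merely a $\liminf$. It is worth saying explicitly that monotonicity is what does the upgrading: a null sequence $r_k$ along which $\mu(B_{r_k}(x))\geq c\,r_k^{n-2\sigma}$ would, after thinning so that the blocks $[r_k,2r_k]$ are disjoint, contribute a uniformly positive amount to the integral over each block, contradicting convergence. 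With the vanishing upper density in hand, the density-versus-Hausdorff comparison applied on the full-$\mu$-measure set where $U_\mu<\infty$ gives $\mu(\Sigma)=0$, the desired contradiction. Compared with the extension-based proof in the cited source, your argument is more elementary in that it never leaves $\R^n$, at the cost of importing the classical capacity--energy duality wholesale.
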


Our arguments from the previous section allow us to extend this result to any $\sigma\in(0,\frac{n}{2})$ in terms of property \eqref{measure} (which can then be related to Minkowski or Assouad dimension). More precisely,

\begin{prop}\label{prop:capacity}
Let $\Sigma$ be a compact set in $\R^n$. Assume that \eqref{measure} holds for some $\lambda\in(0,n)$.   Then, for every  $\sigma\in (0,\frac{n-\lambda}{2}]$, we have $\capacity_\sigma(\Sigma)=0$.
\end{prop}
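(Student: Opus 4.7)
My plan is to build an explicit family of admissible cutoffs $\psi_\varepsilon\in C_c^\infty(\R^n)$ with $\psi_\varepsilon\ge 1$ on $\Sigma$ and $\|(-\D)^{\sigma/2}\psi_\varepsilon\|_{L^2(\R^n)}\to 0$ as $\varepsilon\to 0$, which immediately forces $\capacity_\sigma(\Sigma)=0$. Fix $\varphi\in C_c^\infty(\R^n)$ with $0\le\varphi\le 1$ and $\varphi\equiv 1$ on $\NN_1$, and let $\eta_\varepsilon$ be the mollified cutoff from \eqref{def-eta-ve}. I set
\[
\psi_\varepsilon:=\varphi(1-\eta_\varepsilon)=\varphi-\varphi\eta_\varepsilon.
\]
By \eqref{eta-eps}, $\psi_\varepsilon$ is smooth, compactly supported, and identically $1$ on $\NN_\varepsilon\supset\Sigma$, hence admissible in the variational definition of $\capacity_\sigma(\Sigma)$.

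The core step is to apply Lemma \ref{lem-4.4} with exponent $\sigma/2$ to the product $\varphi\eta_\varepsilon$, which yields the decomposition
\[
(-\D)^{\sigma/2}\psi_\varepsilon=(1-\eta_\varepsilon)(-\D)^{\sigma/2}\varphi-I_\varepsilon,
\]
with $|I_\varepsilon(x)|\le C\varepsilon^{-\sigma}(1+d(x)/\varepsilon)^{-(n+\sigma-\lambda)}$ on $\NN_1$ and $|I_\varepsilon(x)|\le C\varepsilon^{n-\lambda}d(x)^{-(n+\sigma)}$ on $\NN_1^c$. For the first piece, $(-\D)^{\sigma/2}\varphi\in L^\infty(\R^n)$ and $\mathrm{supp}(1-\eta_\varepsilon)\subset\NN_{3\varepsilon}$; integrating the co-area bound $|\partial\NN_r|\le Cr^{n-\lambda-1}$ extracted from \eqref{measure} (as already used in the proof of Theorem \ref{thm-lambda}) gives $|\NN_{3\varepsilon}|\le C\varepsilon^{n-\lambda}$, so this contribution has $L^2$-norm squared at most $C\varepsilon^{n-\lambda}$. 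For $\|I_\varepsilon\|_{L^2(\NN_1)}^2$, co-area followed by the rescaling $r=\varepsilon t$ reduces the integral to $C\varepsilon^{n-\lambda-2\sigma}\int_0^{1/\varepsilon}t^{n-\lambda-1}(1+t)^{-2(n+\sigma-\lambda)}\,dt$, and the $t$-integral is uniformly bounded (convergent at $0$ because $n>\lambda$ and at $\infty$ because $n+2\sigma>\lambda$), giving $O(\varepsilon^{n-\lambda-2\sigma})$. Finally, the far-field piece on $\NN_1^c$ is $O(\varepsilon^{2(n-\lambda)})$ by direct integration.

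Summing the three contributions yields $\|(-\D)^{\sigma/2}\psi_\varepsilon\|_{L^2(\R^n)}^2 \le C\bigl(\varepsilon^{n-\lambda}+\varepsilon^{n-\lambda-2\sigma}+\varepsilon^{2(n-\lambda)}\bigr)$. Under the strict inequality $2\sigma<n-\lambda$ every exponent is positive, all three pieces vanish, and the proposition follows on $(0,(n-\lambda)/2)$. The main obstacle is the borderline case $\sigma=(n-\lambda)/2$: the middle contribution scales as $\varepsilon^{n-\lambda-2\sigma}=\varepsilon^0$, which is merely bounded rather than decaying, so the single-scale cutoff $\psi_\varepsilon$ is insufficient. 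To reach this endpoint I expect to need either a logarithmic refinement of $\eta_\varepsilon$ in the spirit of classical critical-exponent capacity arguments, or a dyadic superposition of cutoffs at scales $\varepsilon, \varepsilon/2, \varepsilon/4,\dots$ combined with a Cauchy--Schwarz summation that averages out the marginal loss; constructing and verifying this refined test function is the step that will require the most care.
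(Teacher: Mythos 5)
Your single-scale construction and the resulting estimate are correct, and they coincide with the paper's preliminary bound \eqref{est-7.1}: both rest on Lemma~\ref{lem-4.4}, co-area with $|\partial\NN_r|\lesssim r^{n-\lambda-1}$, and $|\NN_{3\ve}|\lesssim\ve^{n-\lambda}$, yielding $\|(-\D)^{\sigma/2}\psi_\ve\|_{L^2}^2=O(\ve^{n-\lambda-2\sigma})$. This settles $\sigma\in(0,\tfrac{n-\lambda}{2})$, and your diagnosis that the middle term stops decaying at the endpoint is accurate.

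The problem is that the endpoint $\sigma=\tfrac{n-\lambda}{2}$ is not a deferred technicality: it is the \emph{whole} content of the paper's argument. The paper notes that it suffices to treat $\sigma=\tfrac{n-\lambda}{2}$ (for smaller $\sigma$ one simply enlarges $\lambda$ to $n-2\sigma$, which only weakens \eqref{measure}), and then all the work goes into a multi-scale test function
\[
\psi_k:=\frac{1}{S_k}\sum_{\ell=1}^k\frac{\eta_{\ve_\ell}}{\ell},\qquad S_k=\sum_{\ell=1}^k\frac1\ell,
\]
with scales $\ve_\ell$ so widely separated that $\min\{\ve_\ell/\ve_{\tilde\ell},\,\ve_{\tilde\ell}/\ve_\ell\}\le\min\{1/\ell,1/\tilde\ell\}$ for $\ell\ne\tilde\ell$. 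The key input is the \emph{bilinear} estimate $\int_{\R^n}\eta_\delta(-\D)^\sigma\eta_\ve\,dx=O\big((\ve/\delta)^{2\sigma}\big)$ for $\ve\le\delta$ (the two-scale refinement of your single-scale bound), which after expanding $\int\psi_k(-\D)^\sigma\psi_k$ into diagonal and off-diagonal pieces gives the total $\le C/S_k^2+C/S_k\to0$; the harmonic weighting is precisely what makes this sum vanish. Your closing paragraph gestures at the same family of constructions (geometric scales plus a summation argument, in the spirit of \cite[Section 4.7.2]{Evans-Gariepy}), but you neither state the cross-term estimate nor verify that any particular choice of weights and scales makes the normalized double sum go to zero. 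As a result, the proposition as stated — whose conclusion includes $\sigma=\tfrac{n-\lambda}{2}$ — is not proved. To close the gap you must (i) prove the bilinear two-scale bound, and (ii) exhibit weights and scales for which the doubly-indexed sum actually decays after normalization.
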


\begin{proof}
It suffices to prove the Proposition for $\sigma=\frac{n-\lambda}{2}$.   We fix a non-negative function $\rho\in C^\infty_c(B_1)$ such that $\int_{\mathbb R^n }\rho\,dx=1$. Similarly to \eqref{def-eta-ve}, we set $\rho_\ve(x)=\frac{1}{\ve^n}\rho(\frac{x}{\ve})$ and  $$\eta_\ve(x):=\int_{\NN_{2\ve}}\rho_\ve(x-y)\,dy.$$ We claim that  for $0<\ve\leq \delta\leq 1$,
 \begin{align}   \int_{\R^n} \eta_\delta(x)(-\D)^\sigma\eta_\ve(x)\,dx=\int_{\R^n}\eta_\ve(x)(-\D)^\sigma\eta_\delta(x)\,dx=O(1)\Big(\frac{\ve}{\delta}\Big)^{2\sigma}.
\label{est-7.1}\end{align}
Indeed, from Lemma \ref{lem-4.4} we have that
$$|(-\D)^\sigma\eta_\delta(x)|\leq \frac{C}{\delta^{2\sigma}}\frac{1}{(1+\frac{d(x)}{\delta})^{n+2\sigma-\lambda}}\leq \frac{C}{\delta^{2\sigma}}\quad \text{for }d(x)\leq 1,$$
which leads to
\begin{align*}\int_{\R^n}\eta_\ve(x)(-\D)^\sigma\eta_\delta(x)\,dx
=O(\delta^{-2\sigma})\int_{\NN_{3\ve}}\,dx=O(\ve^{n-\lambda}\delta^{-2\sigma}) =O(\ve^{2\sigma}\delta^{-2\sigma}),
\end{align*}
where we have used that the measure of the tubular neighborhood $\mathcal N_{3\ve}$ is of order $\ve^{n-\lambda}=\ve^{2\sigma}$.

For $k\geq 1$  we set  (compare to \cite[Section 4.7.2]{Evans-Gariepy} for the proof in the local case) $$\psi_k:=\frac{1}{S_k}\sum_{\ell=1}^k\frac{\eta_{\ve_\ell}}{\ell}$$
where
$$S_k:=\sum_{\ell=1}^k\frac{1}{\ell},\quad  \ve_{\ell}:=\ell!.  $$
Notice that $\psi_k\in C_c^\infty(\R^n)$, $\psi_k\equiv 1$  on $\mathcal N_{\ve_k}$,
$$\min\left\{ \frac{\ve_\ell}{\ve_{\tilde \ell}},\frac{\ve_{\tilde \ell}}{\ve_\ell} \right\}\leq\min\left\{\frac{1}{\ell},\frac{1}{\tilde\ell}\right\}\quad\text{for }\tilde\ell\neq\ell.$$ Therefore,
\begin{align*}
\int_{\R^n}|(-\D)^\frac \sigma2\psi_k|^2\,dx &=\int_{\R^n} \psi_k(-\D)^\sigma\psi_k\,dx\\&=\frac{1}{S_k^2}  \sum_{\ell=1}^k  \frac{1}{\ell^2}\int_{\R^n}\eta_\ell(-\D)^\sigma\eta_{ \ell}\, dx+\frac{1}{S_k^2}\sum_{\ell\neq\tilde\ell; \,\ell,\tilde \ell=1}^k \frac{1}{\ell\tilde \ell}\int_{\R^n}\eta_\ell(-\D)^\sigma\eta_{\tilde \ell} \,dx.
\end{align*}
We estimate the two terms in the right hand side above by \eqref{est-7.1}, so
\begin{align*}
\int_{\R^n}|(-\D)^\frac \sigma2\psi_k|^2\,dx &\leq  \frac{C}{S_k^2}  \sum_{\ell=1}^k  \frac{1}{\ell^2}  +\frac{C}{S_k^2}\sum_{\ell\neq\tilde\ell; \,\ell,\tilde \ell=1}^k \frac{1}{\ell\tilde \ell} \min\left\{ \frac1\ell,\frac{1}{\tilde\ell}\right\}^{2\sigma} \\&\leq \frac{C}{S_k^2}+\frac{C}{S_k^2}\sum_{\ell=1}^k\frac{1}{\ell}\sum_{\tilde\ell=1}^k\frac{1}{\tilde\ell^{1+2\sigma}}  \\ &\leq \frac{C}{S_k^2}+\frac{C}{S_k}\\&\xrightarrow{k\to\infty}0.
\end{align*}
This concludes the proof of the Proposition.
\end{proof}

For $\sigma\in(0,1)$, in the definition of $\capacity_\sigma(\Sigma)$, one can take the infimum over  the set of functions in $H^\sigma(\R^n)$ which satisfy
$$0\leq\vp\leq 1 \quad \text{and}\quad \vp\equiv1 \text{  in a small neighborhood of }\Sigma. $$
 Indeed, by a density argument, we can replace the space $C_c^\infty(\R^n)$ by $H^\sigma(\R^n)\cap C^0(\R^n)$.  Then from the relation
$$\int_{\R^n}|(-\D)^\frac \sigma2\vp|^2\,dx=c_{n,\sigma}\int_{\R^n\times\R^n}\frac{|\vp(x)-\vp(y)|^2}{|x-y|^{n+2\sigma}}\,dxdy,$$
we see that $$ \||\vp|\|_{H^\sigma(\R^n)}\leq \|\vp\|_{H^\sigma(\R^n)},$$
 and hence we can  assume that $\vp\geq 0$. Finally, for a given $\ve>0$ and $0\leq \vp\in H^\sigma(\R^n)\cap C^0(\R^n)$ with $\vp\geq 1$ on $\Sigma$, we set
 \begin{align*}
\tilde\vp(x):=\left\{  \begin{array}{ll} 1&\text{for }\vp(x)\geq 1-\ve,\\ \frac{\vp(x)}{1-\ve}&\text{for }0\leq \vp(x)\leq 1-\ve.  \end{array}\right.
\end{align*}
It follows that $0\leq\tilde\vp\leq 1$, $\tilde\vp\equiv1$ in a small neighborhood of $\Sigma$, and $$\|\tilde\vp\|_{H^\sigma(\R^n)}\leq (1+C\ve)\|\vp\|_{H^\sigma(\R^n)},$$
as claimed.


\begin{proof}[Proof of Theorem \ref{thm:capacity}] Since $(-\D)^\gamma h=0$ in $\Omega\setminus\Sigma$, $h$ is smooth in $\Omega\setminus\Sigma$. We fix a smooth domain $\tilde\Omega$ with  $\Sigma\Subset\tilde\Omega\Subset\Omega$ so that $h\in C^\infty(\overline{\tilde \Omega}\setminus\Sigma)$.   Let $H$ be the $\gamma$-harmonic function in $\tilde\Omega$  given by the standard Poisson formula  with boundary data $h$. We claim that $h=H$ in $\tilde\Omega$.

To see this, fix smooth domain $\Omega_k$ such that $\Sigma\Subset\Omega_k\Subset\tilde\Omega$ and $d(\Sigma,\partial\Omega_k)\leq\frac1k$.
Let $\vp_k$ be a minimizer of
$$\left\{\int_{\R^n}|(-\D)^\frac\gamma 2\vp|^2\,dx\,: \,\vp\in H^\gamma(\R^n),\quad 0\leq \vp\leq1,\quad \vp\equiv1\text{ on }\Omega_k\right\}.$$
Then $\vp_k$ satisfies
$$(-\D)^\gamma\vp_k=0\quad\text{in }\R^n\setminus\overline\Omega_k,\quad 0\leq\vp_k\leq1,\quad\vp_k\equiv 1\text{ on }\Omega_k.$$
Since the capacity of $\Sigma$ is $0$, we also have that
 $$\int_{\R^n}|(-\D)^\frac\gamma2\vp_k|^2\,dx\to0.$$
 In particular, by Sobolev embedding, $\|\vp_k\|_{L^p(\R^n)}\to0$ for $p=\frac{2n}{n-2s}$. Hence, up to a subsequence, $\vp_k\to0$ almost everywhere in $\R^n$. 

Since $h$ and $H$ are bounded in $\tilde\Omega$, there exists $M>0$ such that $h-H-M\vp_k\leq 0$ in $\Omega_k$.
Then we see that $$(-\D)^\gamma (h-H-M\vp_k)=0\quad\text{in }\tilde\Omega\setminus\bar\Omega_k,\quad h-H-M\vp_k\leq0\quad\text{in }\R^n\setminus(\tilde\Omega\setminus\Omega_k).$$
Thus by maximum principle $$h-H-M\vp_k\leq 0\quad\text{in }\tilde\Omega\setminus\Omega_k.$$ As $\vp_k\to0 \,\,a.e.$, taking the limit $k\to\infty$ we have that $h\leq H$. In a similar way, $h\geq H$.

 \end{proof}

\section{The growth at infinity}\label{section:growth}

\subsection{Preliminary estimates}

We start with some  preliminary bounds:

\begin{lem} \label{lem-decay}Let $\vp_0\in C^2(\R^n)\cap L_\sigma(\R^n)$ for some $\sigma\in(0,1)$.
Then
\begin{equation*}\label{11}\begin{split}
 |(-\D)^\sigma& \vp_0(x)| \\
 &\leq C(n,\sigma)\left(\frac{ \|D^2\varphi_0\|_{L^\infty(A_1(x))}}{(1+|x|)^{2\sigma-2}}+\frac{\|\vp_0\|_{L^\infty(A_2)}}{(1+|x|)^{2\sigma} }+ \left|  \int_{\{1+\frac{|x|}{2}\leq |x-y|\leq 1+ 2|x|\}} \frac{\vp_0(y)}{|x-y|^{n+2\sigma}} \,dy \right|\right), 
\end{split}
\end{equation*}
where $$A_1(x):=B_{1+\frac{|x|}{2}}(x),\quad  A_2(x):=\rn\setminus A_1(x),\quad A_1:=B_{1+\frac{|x|}{2}}(0),\quad  A_2:=\R^n\setminus A_1.$$
   In particular,
    if there exists $\rho>0$ such that  $$\sup_{x\in\R^n}|x|^{\rho+|\alpha|}|D^\alpha\vp_0(x)|<\infty \quad\text{for every multi-index }\alpha \quad\text{with }0\leq |\alpha|\leq 2,$$
then
\begin{align*}|(-\D)^\sigma \vp_0(x)|\leq C(n,\sigma,\vp_0) \left\{    \begin{array}{ll}   ( 1+|x|)^{-2\sigma-\rho} \quad&\text{if }\rho< n \\  ( 1+|x|)^{-2\sigma-\rho}\log(2+|x|) \quad&\text{if }\rho= n\\ ( 1+|x|)^{-2\sigma-n} \quad&\text{if }\rho>n.\end{array}\right.\end{align*}
    \end{lem}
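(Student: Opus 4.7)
The strategy is to split the standard singular-integral representation of $(-\Delta)^\sigma \vp_0$ into three pieces matching the geometry of $A_1(x)$ and $A_2$. First I would write
\begin{equation*}
(-\Delta)^\sigma \vp_0(x)=\tfrac{1}{2}C_{n,\sigma}\int_{\mathbb{R}^n}\frac{2\vp_0(x)-\vp_0(x+h)-\vp_0(x-h)}{|h|^{n+2\sigma}}\,dh
\end{equation*}
and, setting $R:=1+|x|/2$, decompose the $h$-integration into the near region $\{|h|\leq R\}$, the middle annulus $\{R\leq|h|\leq 1+2|x|\}$, and the far region $\{|h|\geq 1+2|x|\}$.

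On the near region the plan is to apply the second-order Taylor bound
\begin{equation*}
|2\vp_0(x)-\vp_0(x+h)-\vp_0(x-h)|\leq \|D^2\vp_0\|_{L^\infty(A_1(x))}|h|^2,
\end{equation*}
valid since $x\pm h\in A_1(x)$, and then integrate $|h|^{2-n-2\sigma}$ over $\{|h|\leq R\}$. Since $\sigma<1$, this produces a multiple of $R^{2-2\sigma}=(1+|x|/2)^{2-2\sigma}$, which is exactly the first term in the stated bound. On the middle and far regions the integrand is no longer singular, so I would drop the symmetrization and split each integral into a $\vp_0(x)$-piece and a $\vp_0(y)$-piece via the change of variables $y=x+h$. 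The $\vp_0(y)$-piece on the middle annulus is, up to a constant, the third term; the corresponding piece on the far region is bounded by $C\|\vp_0\|_{L^\infty(A_2)}(1+|x|)^{-2\sigma}$, once one notices that $|y-x|\geq 1+2|x|$ forces $|y|\geq 1+|x|\geq R$ and hence $y\in A_2$. The combined $\vp_0(x)$-pieces amount to a constant multiple of $\vp_0(x)R^{-2\sigma}$, bounded by $C|\vp_0(x)|(1+|x|)^{-2\sigma}$; for $|x|\geq 2$ we have $x\in A_2$, so it is absorbed into the second term, while for $|x|<2$ the bounded contribution is absorbed into the universal constant.

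For the corollary-style pointwise decay estimates, the plan is to substitute $|D^\alpha\vp_0(x)|\leq C(1+|x|)^{-\rho-|\alpha|}$ (for $|\alpha|\leq 2$) into the main bound. The first and second terms both contribute $(1+|x|)^{-2\sigma-\rho}$. For the third term, using that $|y-x|$ is comparable to $1+|x|$ throughout the annulus for large $|x|$, it reduces to
\begin{equation*}
C(1+|x|)^{-n-2\sigma}\int_{|y|\leq C(1+|x|)}(1+|y|)^{-\rho}\,dy.
\end{equation*}
Evaluating the last integral in the three regimes yields $(1+|x|)^{n-\rho}$, $\log(2+|x|)$, or $O(1)$ for $\rho<n$, $\rho=n$, or $\rho>n$ respectively, producing the three cases in the statement by comparing orders with the first two terms.

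The main technical obstacle will be the bookkeeping for the $\vp_0(x)$-piece that appears from the non-symmetric splitting on the middle and far regions: matching it against the stated bound requires the dichotomy $x\in A_2$ (when $|x|\geq 2$) versus $|x|<2$, with the latter contribution absorbed into the universal constant. A secondary delicate point is the case distinction for the third term in the corollary, where the logarithmic borderline $\rho=n$ is invisible in the main bound and surfaces only from this explicit integration.
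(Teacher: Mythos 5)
Your proposal follows essentially the same decomposition and estimates as the paper's own proof: the same near/middle/far split (with the near region matching $A_1$ and the two pieces of $A_2$), the same second-order Taylor bound on the near region, and the same change of variables producing the third term, followed by the same substitution of the polynomial decay bounds into the three terms for the ``in particular'' part. The only soft spot is your remark that the $\vp_0(x)$-piece for $|x|<2$ is ``absorbed into the universal constant'' --- the stated bound has no additive constant --- but the paper's proof passes over this same point silently, and it is immaterial for the decay estimate that is actually used later.
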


\begin{proof}  The proof is  standard but we give the details for completeness. We shall use the following definition of $(-\D)^\sigma $
$$ (-\D)^\sigma u(x)=\frac{1}{2}C_{n,\sigma}\int_{\R^n}\frac{2\varphi_0(x)-\varphi_0(x+y)-\varphi_0(x-y)}{|y|^{n+2\sigma}}\,dy.$$
Then  we have
 \begin{align*}
 \left|(-\Delta)^{\sigma}\varphi_0(x)\right|&\leq C\left(I_1+I_2 \right),
 \end{align*}
 where we have defined
 $$I_i:=\left|\int_{A_i}\frac{\varphi_0(x+y)+\varphi_0(x-y)-2\varphi_0(x)}{|y|^{n+2\sigma}}\,dy\right|,\quad i=1,2.$$
 Noticing that
 $$|\varphi_0(x+y)+\varphi_0(x-y)-2\varphi_0(x)|\leq \|D^2\varphi_0\|_{L^\infty(A_1(x))}|y|^2\quad\text{for }y\in A_1,$$ we get
 $$I_1\leq \|D^2\varphi_0\|_{L^\infty(A_1(x))}\int_{A_1}\frac{dy}{|y|^{n-2+2\sigma}}\leq C\|D^2\varphi_0\|_{L^\infty(A_1(x))}(1+|x|)^{2-2\sigma}.$$
 On the other hand
 \begin{align*}
  I_2 &\leq 2\left|\int_{A_2}\frac{\varphi_0(x-y)}{|y|^{n+2\sigma}}\,dy\right| + 2|\varphi_0(x)|\int_{A_2}\frac{dy}{|y|^{n+2\sigma}}\leq 2\left|\int_{A_2}\frac{\varphi_0(x-y)}{|y|^{n+2\sigma}}\,dy\right|+
  C|\varphi_0(x)| (1+|x|)^{-2\sigma} \\&=: 2I_3+C|\varphi_0(x)| (1+|x|)^{-2\sigma}.
 \end{align*}
 Now we bound \begin{align*} I_3&\leq  \left| \left( \int_{\{1+\frac{|x|}{2}\leq |y|\leq 1+2|x|\}}+\int_{\{1+2|x|\leq |y|\}}\right)\frac{\varphi_0(x-y)}{|y|^{n+2\sigma}}\,dy\right|    \\  & \leq \left|  \int_{\{1+\frac{|x|}{2}\leq |x-y|\leq 1+2|x|\}} \frac{\vp_0(y)}{|x-y|^{n+2\sigma}} \,dy \right| +C\|\vp_0\|_{L^\infty(A_2)}(1+|x|)^{-2\sigma} \\  &\leq \frac{C}{(1+|x|)^{n+2\sigma}}\int_{\{|y|\leq 1+3|x|\}}|\vp_0(y)|\,dy+C\|\vp_0\|_{L^\infty(A_2)}(1+|x|)^{-2\sigma} .\end{align*}
  Combining these estimates we conclude the proof of the Lemma.
  \end{proof}

\begin{lem}\label{convergence} Let $\vp\in C^\infty(\R^n)$ be such that $\vp(x)=\frac{1}{|x|^\rho}$  on $B_1^c$ for some $\rho>0$. Let  $\eta$ be    a smooth cutoff function  such that $$\eta(x)=1 \quad\text{for } |x|\leq 1\quad\text{ and}\quad\eta(x)=0\quad\text{for }  |x|\geq 2.
$$ Denote by $\eta_\ve(x)=\eta(\ve x)$ and  $\varphi_\ve(x):=\varphi \eta_\ve(x)$.  Then for every $\gamma>0$ we have
$$(-\D)^\gamma\vp_\ve\to (-\D)^\gamma \vp\quad\text{ locally uniformly in }\R^n \text{ as } \ve\to 0.$$ Moreover, there exists $C>0$ (independent of $\ve$) such that  \begin{align*}|(-\D)^\gamma \vp_\ve(x)|\leq C(n,\gamma,\vp) \left\{    \begin{array}{ll}   ( 1+|x|)^{-2\gamma-\rho} \quad&\text{if }\rho<  n \\  ( 1+|x|)^{-2\gamma-\rho}\log(2+|x|) \quad&\text{if }\rho= n \\ ( 1+|x|)^{-2\gamma-n} \quad&\text{if }\rho>n.
\end{array}\right.\end{align*}  \end{lem}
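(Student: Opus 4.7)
My plan is to establish first a uniform pointwise decay estimate on $(-\Delta)^\gamma \varphi_\varepsilon$ with constants independent of $\varepsilon$, and then deduce local uniform convergence by a dominated-convergence argument. The starting point is a uniform control on derivatives: by Leibniz and the scaling identity $D^\beta\eta_\varepsilon(x)=\varepsilon^{|\beta|}(D^\beta\eta)(\varepsilon x)$ (supported on $\{1/\varepsilon\leq|x|\leq 2/\varepsilon\}$, where $\varepsilon\sim(1+|x|)^{-1}$), one checks that
$$|D^\alpha\varphi_\varepsilon(x)|\leq C_\alpha(1+|x|)^{-\rho-|\alpha|}\qquad\text{for all }x\in\mathbb R^n,$$
uniformly in $\varepsilon\in(0,1]$ and every multi-index $\alpha$. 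For $\gamma\in(0,1)$, the stated three-regime bound then follows from a direct application of Lemma \ref{lem-decay}, the case split arising from the tail term there.

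For $\gamma\geq 1$, I would write $\gamma=k+\sigma'$ with $k\in\mathbb N$ and $\sigma'\in[0,1)$, set $\psi_\varepsilon:=(-\Delta)^k\varphi_\varepsilon$, and note that by the same Leibniz computation $|D^\alpha\psi_\varepsilon(x)|\leq C_\alpha(1+|x|)^{-\rho-2k-|\alpha|}$ uniformly in $\varepsilon$; moreover $\psi_\varepsilon\in C^\infty_c(\mathbb R^n)$. If $\sigma'=0$ we are done by the derivative bound alone; otherwise, apply Lemma \ref{lem-decay} to $\psi_\varepsilon$ with effective decay exponent $\rho+2k$. The ``inner'' contribution of Lemma \ref{lem-decay} produces the desired rate $(1+|x|)^{-2\gamma-\rho}$, matching the statement when $\rho<n$. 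The tail contribution requires extra care: since $\psi_\varepsilon$ lies in the range of $(-\Delta)^k$, integration by parts against polynomials yields $\int y^\beta\psi_\varepsilon(y)\,dy=0$ for every $|\beta|<2k$. Taylor-expanding $|x-y|^{-n-2\sigma'}$ at $y=0$ to order $2k-1$ and using these vanishing moments upgrades the tail to the sharp bound $(1+|x|)^{-n-2\gamma}$, which is what is needed in the regime $\rho>n$; the logarithmic $\rho=n$ case is handled by the same cancellation, with one less moment available.

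Finally, for local uniform convergence, $\varphi_\varepsilon\to\varphi$ in $C^\infty_{\mathrm{loc}}(\mathbb R^n)$ as $\varepsilon\to 0$, and the $\varepsilon$-uniform decay bounds above provide integrable majorants in the singular-integral representation of $(-\Delta)^\gamma$. Dominated convergence yields pointwise convergence, and local uniform convergence is then obtained by equicontinuity, itself a consequence of applying the same argument to $\partial_i\varphi_\varepsilon$ (whose derivatives obey the decay with $\rho$ replaced by $\rho+1$) to get uniform bounds on $\nabla((-\Delta)^\gamma\varphi_\varepsilon)$ on compact sets. The hardest point is the intermediate regime $\rho\leq n<\rho+2k$: a naive iteration of Lemma \ref{lem-decay} loses the sharp rate there, and the vanishing-moment cancellation for $\psi_\varepsilon$ is essential to recover it.
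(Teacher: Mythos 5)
Your proposal follows essentially the same route as the paper: write $\gamma$ as an integer plus a fractional part (the paper's $\gamma_0+\gamma_1$ with $\gamma_1\in(0,1)$, your $k+\sigma'$), establish the uniform derivative decay $|D^\alpha\varphi_\varepsilon(x)|\leq C_\alpha(1+|x|)^{-\rho-|\alpha|}$ by Leibniz, and feed $\phi:=(-\Delta)^{\gamma_0}\varphi_\varepsilon$ into Lemma~\ref{lem-decay}. Where you go beyond the paper's literal text is in identifying that the tail term of Lemma~\ref{lem-decay} is the delicate point: since $\phi$ has pointwise decay of order $\rho+2\gamma_0$, a blind application of the decay-based estimate to the tail integral would produce a case split at $\rho+2\gamma_0$ versus $n$, not at $\rho$ versus $n$, and in the regime $\rho+2\gamma_0>n$ it yields only $(1+|x|)^{-n-2\gamma_1}$, which is strictly weaker than the claimed $(1+|x|)^{-2\gamma-\min(\rho,n)}$ whenever $\gamma_0\geq1$. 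The paper simply asserts the three-case tail bound for $\phi$ and says ``the desired estimate follows easily,'' eliding this step. Your observation that $\int y^\beta\phi(y)\,dy=0$ for all $|\beta|<2\gamma_0$ (since $\varphi_\varepsilon\in C_c^\infty$ and $(-\Delta)^{\gamma_0}$ annihilates polynomials of degree below $2\gamma_0$), combined with a Taylor expansion of the kernel $|x-y|^{-n-2\gamma_1}$ at $y=0$, is exactly the cancellation needed to recover the claimed rate, and your bookkeeping of the remainder confirms the stated bound in all three regimes. The rest of your argument---local uniform convergence via the $\varepsilon$-uniform majorants, equicontinuity from the derivative bounds, and the integer-$\gamma$ case being handled by the derivative estimate alone (which the paper's strict $\gamma_1\in(0,1)$ tacitly skips)---is correct and consistent with the paper's outline.
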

\begin{proof}
We write $\gamma=\gamma_0+\gamma_1$ where $0<\gamma_1<1$ and $\gamma_0\in\mathbb{N}\cup\{0\}$. It follows from Lemma \ref{lem-decay}, applied to  $(-\D)^{\gamma_0}\vp_\ve-(-\D)^{\gamma_0}\vp$, that
 $$(-\D)^\gamma\vp_\ve\to(-\D)^\gamma \vp\quad\text{locally uniformly in }\R^n.$$
To prove the second part of the lemma, first we note that
$$|D^\alpha\vp_\ve(x)|\leq \frac{C(\alpha)}{1+|x|^{ \rho +|\alpha|}}\quad\text{for every multi-index }\alpha,\quad \ve>0.$$ In particular, $\phi:=(-\D)^{\gamma_0}\vp_\ve$ satisfies
\begin{equation*}
\begin{split}
&\sup_{\{|x-y|\leq 1+\frac{|x|}{2}\}}|D^2\phi(y)|
\leq \frac{C}{(1+|x|)^{\rho+2\gamma_0+2}},\quad
\sup_{\{|y|\geq 1+\frac{|x|}{2}\}}|\phi(y)|\leq \frac{C}{(1+|x|)^{ \rho+2\gamma_0}}, \quad \text{and}\\
& \qquad\left|  \int_{\{1+\frac{|x|}{2} \leq|x-y|\leq 1+2|x|\}} \frac{\phi(y)}{|x-y|^{n+2\gamma_1}}\,dy  \right| \leq C
   \left\{    \begin{array}{ll}   ( 1+|x|)^{-2\gamma-\rho} \quad&\text{if }\rho<  n \\  ( 1+|x|)^{-2\gamma-\rho}\log(2+|x|) \quad&\text{if }\rho= n \\ ( 1+|x|)^{-2\gamma-n} \quad&\text{if }\rho>n.
\end{array}\right., 
 \end{split}
\end{equation*} and then the desired estimate follows easily.


 \end{proof}

Now we look at the general equation
\begin{equation}\label{formula60}(-\Delta)^\gamma u=F(x)\quad\text{in }\mathbb R^n,
\end{equation}
understood as in \eqref{eq-2}. The first step to prove Theorem \ref{thm-1} is to  show next that, for any  $\delta\geq 2s_0,$ and outside the origin, $\frac{1}{|x|^{n-2\gamma+\delta}}$ is a good test function in \eqref{eq-2}:

\begin{lem}\label{lemma:claim1}
Let $u$ be as in Theorem \ref{thm-1}. Let $\vp\in C^\infty(\R^n)$ be such that $\vp(x)=\frac{1}{|x|^{n-2\gamma+\delta}}$ on $B_1^c$ for some  $\delta\geq  2s_0$.     
The following identity holds:
\begin{equation*}
\int_{\R^n} u(-\Delta)^\gamma\varphi \,dx=\int_{\R^n} F(x)\varphi(x)\,dx.	
\end{equation*}
 In particular,
 \begin{align}\label{est-f}\int_{\R^n}\frac{F(x)}{1+|x|^{n-2\gamma+\delta}}\,dx<\infty\quad\text{for every }\delta\geq 2s_0 .
\end{align}
\end{lem}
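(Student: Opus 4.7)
The plan is to establish the identity by a cutoff approximation. Setting $\varphi_\ve := \varphi \eta_\ve$ with $\eta_\ve$ as in Lemma \ref{convergence}, the function $\varphi_\ve$ belongs to $C_c^\infty(\R^n)$, so \eqref{eq-2} gives
$$\int_{\R^n} u(-\D)^\gamma \varphi_\ve \, dx = \int_{\R^n} F \varphi_\ve \, dx,$$
and the goal is to pass to the limit as $\ve \to 0$ on both sides.

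For the left-hand side, Lemma \ref{convergence} (applied with $\rho = n - 2\gamma + \delta$) furnishes the pointwise convergence $(-\D)^\gamma \varphi_\ve \to (-\D)^\gamma \varphi$ together with an $\ve$-independent bound. I would split into the three cases $\rho < n$, $\rho = n$, $\rho > n$: in each case the bound takes the form $C(1+|x|)^{-n-2s}$ (up to a logarithmic factor when $\rho = n$) for some $s \in [s_0, \gamma]$. Using that $L_{s_0}(\R^n) \subset L_s(\R^n)$ for every $s \geq s_0$ by an elementary monotonicity of the weight, the resulting dominating function is integrable against $u$, and dominated convergence yields the limit of the left-hand side.

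For the right-hand side I would decompose $\varphi = \varphi_+ - \varphi_-$. Since $\varphi(x) = |x|^{-(n-2\gamma+\delta)} > 0$ outside $B_1$, the negative part $\varphi_-$ is compactly supported inside $B_1$, so $F \varphi_- \in L^1(\R^n)$ by $F \in L^1_{loc}(\R^n)$, and dominated convergence gives $\int F \varphi_- \eta_\ve \to \int F \varphi_-$. Convergence of the left-hand side and of $\int F \varphi_- \eta_\ve$ already forces $\int F \varphi_+ \eta_\ve$ to converge to a finite limit, so Fatou's lemma applied to the non-negative sequence $F \varphi_+ \eta_\ve$ yields $F \varphi_+ \in L^1(\R^n)$; a further application of dominated convergence, using $0 \leq \varphi_+ \eta_\ve \leq \varphi_+$, then identifies the limit as $\int F \varphi_+$. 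Combining both pieces yields the claimed identity. For \eqref{est-f}, I would fix a specific non-negative smooth $\varphi$ comparable to $(1+|x|)^{-(n-2\gamma+\delta)}$ on all of $\R^n$; the identity just established shows $\int_{\R^n} F \varphi \, dx = \int_{\R^n} u(-\D)^\gamma \varphi \, dx < \infty$, from which \eqref{est-f} follows.

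The main obstacle I anticipate is the borderline case $\rho = n$ (that is, $\delta = 2\gamma$), where Lemma \ref{convergence} only provides the decay $(1+|x|)^{-n-2\gamma}\log(2+|x|)$. Absorbing the logarithm into a slightly smaller polynomial decay $(1+|x|)^{-n-2s}$ with $s < \gamma$ requires that such an $s$ still satisfies $s \geq s_0$, which is precisely ensured by the strict inequality $s_0 < \gamma$ inherent in the context of Theorem \ref{thm-1}.
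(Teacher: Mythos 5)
Your proposal is correct and follows essentially the same route as the paper: test with $\varphi_\ve = \varphi\eta_\ve$, pass to the limit on the $u$-side using the uniform decay bound from Lemma \ref{convergence} and dominated convergence, and on the $F$-side using the sign $F \geq 0$. The only cosmetic difference is that the paper invokes monotone convergence directly (relying on $\eta_\ve \equiv 1$ on $B_1$ for small $\ve$ and $\varphi > 0$ outside $B_1$), whereas you decompose $\varphi$ into positive and negative parts and use Fatou followed by dominated convergence — an equivalent route — and your observation that the borderline $\delta = 2\gamma$ case needs $s_0 < \gamma$ to absorb the logarithm correctly matches the implicit assumption in Theorem \ref{thm-1}.
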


\begin{proof}
By Lemma \ref{convergence} with $\rho=n-2\gamma+\delta$ we have that   \begin{align*} \int_{\R^n}F\vp \,dx=\lim_{\ve\to0}\int_{\R^n}F\vp_\ve\, dx=\lim_{\ve\to0}\int_{\R^n}u(-\D)^\gamma\vp_\ve \,dx=\int_{\R^n}u(-\D)^\gamma\vp \,dx<\infty, \end{align*} thanks to monotone convergence theorem and dominated convergence theorem, and  this completes the proof.\\
\end{proof}

\subsection{A bootstrap argument for the semilinear equation}\label{subsection:bootstrap}

From the discussion in the previous subsection, if $u$ is a solution to
\begin{align}\label{55} (-\D)^\gamma u=fu^p\quad\text{in }\R^n,
\end{align}
a bootstrap argument allows to improve the estimate on the growth of $u$ at infinity. Indeed,

\begin{lem}\label{u^p-estimate}
Let $u\in L_\gamma(\R^n)$ be a non-negative solution to  \eqref{55} for some $1<p<\infty$ and $\gamma\in(0,\frac n2)$.  Assume that there exists $C>0$ such that
\begin{equation*}\label{condition-ff}
\frac1C\leq f\leq C\quad\text{in }\R^n.
\end{equation*}
Then
 \begin{align}\label{equation30}
 \int_{\R^n}\frac{f(x)u^p(x)}{1+|x|^{n-2\gamma}}\,dx<\infty.
 \end{align}

\end{lem}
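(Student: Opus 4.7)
The proof proceeds by a bootstrap argument alternating two steps: (i) testing the equation with a smooth function of polynomial decay, controlled via Lemma \ref{convergence}, and (ii) applying H\"older's inequality with exponents $p,p'$, exploiting $u^p\in L^1_{loc}$ (hence $u\in L^p_{loc}$). The target is to upgrade the weighted integrability of $u$ from the initial assumption $u\in L_\gamma$ to $u\in L_0$, namely $\int u/(1+|x|)^n\,dx<\infty$; once this is achieved, one last application of the test-function step with $\rho=n-2\gamma$ will finish the proof.

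For each $\rho>0$, let $\varphi_\rho\in C^\infty(\R^n)$ agree with $|x|^{-\rho}$ on $B_1^c$, and let $\varphi_{\rho,\ve}:=\varphi_\rho\eta_\ve$ be the truncation from Lemma \ref{convergence}. Plugging $\varphi_{\rho,\ve}$ into \eqref{eq-2} and letting $\ve\to 0$ (justified by the locally uniform convergence and uniform pointwise bound of Lemma \ref{convergence}, via monotone/dominated convergence) gives
$$\int \frac{fu^p}{(1+|x|)^{\rho}}\,dx \le C\int u\,|(-\D)^\gamma \varphi_\rho|\,dx.$$
Taking $\rho=n+\ve$ for any $\ve>0$ puts us in the case $\rho>n$ of Lemma \ref{convergence}, so $|(-\D)^\gamma \varphi_\rho|\le C(1+|x|)^{-n-2\gamma}$ and, by $u\in L_\gamma$, the RHS is finite. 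Thus $\int u^p/(1+|x|)^{n+\ve}\,dx<\infty$ for every $\ve>0$.

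Next, H\"older's inequality on $\{|x|>1\}$ (with the integral over $B_1$ absorbed into $\|u\|_{L^p(B_1)}$) yields, for any $\alpha,c$ with $(n+\ve)/p+c/p'=\alpha$ and $c>n$,
$$\int_{\{|x|>1\}} \frac{u}{(1+|x|)^\alpha}\,dx \le \Big(\int \frac{u^p}{(1+|x|)^{n+\ve}}\,dx\Big)^{1/p}\Big(\int \frac{dx}{(1+|x|)^{c}}\Big)^{1/p'}<\infty.$$
The constraint reads $\alpha>n+\ve/p$; letting $\ve\to 0$ gives $u\in L_s$ for every $s>0$. Reapplying the test-function step with $\rho=n-2\gamma+2s\in(0,n)$, for which Lemma \ref{convergence} gives $|(-\D)^\gamma\varphi_\rho|\le C(1+|x|)^{-(n+2s)}$, produces $\int u^p/(1+|x|)^{n-2\gamma+2s}\,dx<\infty$. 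A second H\"older round, now with $a=n-2\gamma+2s<n$, allows the choice $\alpha=n$ (the condition $c>n$ becomes $\alpha>n+(2s-2\gamma)/p$, satisfied for $s<\gamma$), whence $\int u/(1+|x|)^n\,dx<\infty$, i.e., $u\in L_0$. A final test-function step with $\rho=n-2\gamma<n$, for which $|(-\D)^\gamma\varphi_\rho|\le C(1+|x|)^{-n}$, then gives
$$\int \frac{fu^p}{(1+|x|)^{n-2\gamma}}\,dx \le C\int \frac{u}{(1+|x|)^n}\,dx<\infty.$$

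The main obstacle is the bookkeeping in the iterated H\"older estimates: one must check that the exponent recursion implicit in alternating the test-function and H\"older steps converges to $u\in L_0$ in finitely many rounds. Concretely, setting $b_k$ so that $u\in L_{(b_k-n)/2}$, one finds $b_{k+1}=(b_k-2\gamma)/p+n/p'$ starting at $b_0=n+2\gamma$, with geometric convergence to the fixed point $n-2\gamma p'/p<n$; thus $b_k<n$ after at most two iterations. A subsidiary technicality is the $\ve\to 0$ limit in each test-function application, handled uniformly by the bounds of Lemma \ref{convergence} together with standard monotone/dominated convergence.
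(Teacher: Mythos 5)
Your proof is correct and follows essentially the same route as the paper's: test the equation against truncations of $|x|^{-\rho}$ controlled by Lemma~\ref{convergence}, alternate with H\"older to transfer decay from $u^p$ back to $u$, and iterate twice before taking $\rho=n-2\gamma$. The paper records the intermediate step as $\int_{\R^n}u(x)/(1+|x|^q)\,dx<\infty$ for all $q>n-2\gamma/p$ (its equation (5.5)), which is slightly stronger than the bare $u\in L_0$ you extract, but the final application of the test function with $\rho=n-2\gamma$ is identical; the recursion $b_{k+1}=(b_k-2\gamma)/p+n/p'$ you exhibit is precisely the arithmetic underlying the paper's two rounds of H\"older.
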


\begin{proof}
For $\delta>0$ we fix $\vp\in C^\infty(\R^n)$ such that $\vp(x)=\frac{1}{|x|^{n+\delta}}$ on $B_1^c$. Letting  $\eta_\ve$ as before we set  $\vp_\ve:=\eta_\ve\vp$. Then together with  Lemma \ref{convergence}, dominated convergence theorem and monotone convergence theorem we get
$$\infty>\int_{\R^n}u(-\D)^\gamma\vp \,dx=\int_{\R^n}\frac{fu^p(x)}{1+|x|^{n+\delta}}\,dx.$$
Hence, as   $f$ has a positive lower bound,   we get  \begin{equation*} \int_{\R^n}\frac{u^p(x)}{1+|x|^{n+\delta}}\,dx<\infty\quad\text{for every $\delta>0$}.
\end{equation*}
For  any $q>n$ we can write $q=q_1+q_2$ with $q_1p>n$ and $q_2p'>n$. Then by H\"older inequality we get that
\begin{align*}
\int_{\R^n}\frac{u(x)}{1+|x|^q}\,dx\leq C\left(\int_{\R^n}\frac{u^p(x)}{1+|x|^{q_1 p}}\,dx\right)^\frac1p\left(\int_{\R^n}\frac{dx}{1+|x|^{q_2p'}}\right)^\frac{1}{p'}<\infty\quad\text{for every }q>n.
\end{align*}
From this, and  Lemma \ref{convergence} we see that   $\vp\in C^\infty(\R^n)$ with $\vp(x)=\frac{1}{|x|^{n-2\gamma+\delta}}$ on $B_1^c$ with  $\delta>0$  can be used as a test function in \eqref{eq-2}, and consequently we have
$$\int_{\R^n}\frac{u^p(x)}{1+|x|^q}\,dx<\infty\quad\text{for every }q>n-2\gamma.$$
Again by H\"older inequality
\begin{align}\label{induction}\int_{\R^n}\frac{u(x)}{1+|x|^q}\,dx<\infty\quad\text{for every }q>n-\frac{2\gamma}{p}.\end{align}
Now we can take $\vp\in C^{\infty}(\R^n)$ with $\vp(x)= \frac{1}{|x|^{n-2\gamma}}$ on $B_1^c$ as a test function, and we obtain \eqref{equation30}.
\end{proof}

Similar arguments, but with a more complex iteration will yield the triviality of solutions to \eqref{55} for $1<p<\frac{n}{n-2\gamma}$ (see Proposition \ref{prop:u-vanish}). We give the details below in Section \ref{subsection:very-subcritical}.

\subsection{The case $\gamma\in\N$ }

Growth estimates are easy to obtain in this case.

\begin{lem}\label{lemma:integer}
If $u$ is a solution to \eqref{5} for some $p>1$, $\gamma$ an integer in $(0,\frac n2)$, and the right hand side satisfies \eqref{condition-f}, then \begin{align}\label{est-int} \int_{\R^n}\frac{u(x)}{1+|x|^s}\,dx<\infty\quad\text{for every }s>n-2\gamma \frac{p'}{p},\end{align} where $\frac{1}{p'}+\frac{1}{p}=1$.
\end{lem}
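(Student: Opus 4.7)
The plan is to exploit the locality of $(-\D)^\gamma$ when $\gamma\in\N$: it is a differential operator of order $2\gamma$, so compactly supported Pohozaev-type test functions can be inserted directly into \eqref{eq-2}, with no need for any decay assumption on $u$ at infinity. I would fix an integer $m>2\gamma p'$ and a nonnegative cutoff $\eta\in C_c^\infty(\R^n)$ with $\eta\equiv 1$ on $B_1$ and $\eta\equiv 0$ on $\R^n\setminus B_2$, and for $R\geq 1$ set $\vp_R(x):=\eta(x/R)^m$. The ordinary product and chain rules yield the pointwise bound
$$|(-\D)^\gamma \vp_R(x)|\leq CR^{-2\gamma}\,\eta(x/R)^{m-2\gamma},\qquad x\in\R^n,$$
because each of the $2\gamma$ derivatives hitting $\eta(x/R)^m$ contributes at most a factor of $R^{-1}$, and the choice $m\geq 2\gamma$ always leaves behind at least $m-2\gamma$ undifferentiated copies of $\eta(x/R)$.

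Plugging $\vp_R$ into \eqref{eq-2} and combining with \eqref{condition-f} and the previous estimate,
$$\frac{1}{C}\int_{\R^n} u^p\vp_R\,dx\leq\int_{\R^n} fu^p\vp_R\,dx=\int_{\R^n} u(-\D)^\gamma\vp_R\,dx\leq CR^{-2\gamma}\int_{\R^n} u\,\eta(x/R)^{m-2\gamma}\,dx.$$
Since $m>2\gamma p'$, a H\"older inequality with exponents $p$ and $p'$, together with $\eta(x/R)^m=\vp_R$ and the fact that $\eta(x/R)^{m-2\gamma p'}$ is bounded by $1$ and supported in $B_{2R}$, gives
$$\int_{\R^n} u\,\eta(x/R)^{m-2\gamma}\,dx \leq \left(\int_{\R^n} u^p\vp_R\,dx\right)^{1/p}\left(\int_{B_{2R}}\eta(x/R)^{m-2\gamma p'}\,dx\right)^{1/p'} \leq CR^{n/p'}\left(\int_{\R^n} u^p\vp_R\,dx\right)^{1/p}.$$
Using that $u^p\in L^1_{loc}(\R^n)$ (a direct consequence of $u\in L^1_{loc}$ and \eqref{eq-2} tested against nonnegative bump functions) so that the left-hand side is finite a priori, rearranging the combined inequality yields
$$\int_{B_R}u^p\,dx\leq\int_{\R^n} u^p\vp_R\,dx\leq CR^{n-2\gamma p'}\quad\text{for every }R\geq 1.$$

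A further application of H\"older on $B_R$ gives $\int_{B_R} u\,dx\leq |B_R|^{1/p'}\bigl(\int_{B_R}u^p\,dx\bigr)^{1/p}\leq CR^{n-2\gamma p'/p}$, whence \eqref{est-int} follows from a dyadic decomposition
$$\int_{\R^n}\frac{u(x)}{1+|x|^s}\,dx\leq \int_{B_1}u\,dx+\sum_{k=0}^\infty 2^{-ks}\int_{B_{2^{k+1}}}u\,dx\leq C+C\sum_{k=0}^\infty 2^{k(n-2\gamma p'/p-s)},$$
which converges precisely when $s>n-2\gamma p'/p$. There is no substantive obstacle in the argument: it is a standard Mitidieri--Pohozaev type capacity estimate, and the locality of integer-order $(-\D)^\gamma$ is exactly what allows one to dispense with the assumption $u\in L_\gamma(\R^n)$ that was imposed in Lemma \ref{u^p-estimate}.
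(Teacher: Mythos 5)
Your proof is correct and follows essentially the same approach as the paper: both insert the compactly supported test function $\eta(x/R)^m$ (the paper calls it $\vp_R^q$), use the product rule to obtain the pointwise bound $|(-\Delta)^\gamma\vp_R^q|\leq CR^{-2\gamma}\vp_R^{q-2\gamma}$, absorb the right-hand side by H\"older with exponent $p$, and conclude with a dyadic decomposition. The only difference is cosmetic notation and the remark in parentheses about $u^p\in L^1_{loc}$, which the paper treats as part of the definition of distributional solution rather than something to be deduced.
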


 Here the meaning of  equation \eqref{5} is that  $u^p\in L_{loc}^1(\R^n)$ (thus $u\in L^1_{loc}(\R^n)$),  and $u$ is a distributional solution.

 Let us first introduce some notations:  for a smooth function $\vp$, let $\mathscr{A}$ be the set of all derivatives of $\vp$ and their products, that is $$\mathscr{A}:=\left\{ \prod_{i=1}^k D^{\alpha_i} \vp:\, k\in\N,\, \alpha_i\in\N^n \right\}.$$ For $\ell\geq 1$  let  $\mathscr{A}_\ell$ be the vector space (over $\R$)  generated by the elements of $\mathscr{A}$ of order $\ell$, that is, generated by the set $$  \left\{ \prod_{i=1}^k D^{\alpha_i} \vp:\,  \sum_{i=1}^k|\alpha_i|=\ell \right\}.$$

We fix $\vp\in C^\infty(\R^n)$ with $\vp\geq0$. By induction one can show that  for every multi-index $\alpha$ with $|\alpha|=k\in[1,q]$ we have   $$D^\alpha\vp^q=\sum_{\ell=1}^k\vp^{q-\ell}\mathscr{F}_{\alpha,\ell},\quad \mathscr{F}_{\alpha,\ell}\in\mathscr{A}_k.$$
Setting $\vp_R(x):=\vp(\frac xR)$ one gets (use that $\vp^{q_1}\leq C\vp^{q_2}$ for $q_1>q_2$) $$|D^\alpha \vp_R^q(x)|\leq \frac{C}{R^{|\alpha|}} \sum_{\ell=1}^{|\alpha|}\vp_R(x)^{q-\ell}\leq \frac{C}{R^{|\alpha|}} \vp_R(x)^{q-|\alpha|}.$$  \\

\begin{proof}[Proof of Lemma \ref{lemma:integer}]
  We fix a non-negative function $\vp\in C_c^\infty(\R^n)$  such that  $\vp=1$ on $B_1$ and $\vp=0$ on $B_2$. Let $\vp_R$ be as above.  We will use H\"older inequality with $\frac{1}{p}+\frac{1}{p'}=1$. For this, fix $q>2\gamma p'$ an integer (we can also take $q=2\gamma p'$, by a density argument; in that case $\vp_R$ will be $C^{2k}$)  and consider the smooth test function $\vp_R^q$. From the equation we obtain
  \begin{align*} \int_{\R^n}fu^p\vp^q_R\,dx &=\int_{\R^n}u(-\D)^\gamma \vp^q_R\,dx \leq \frac{C}{R^{2\gamma}}\int_{B_{2R}}u\vp_R^{q-2\gamma} \,dx = \frac{C}{R^{2\gamma}}\int_{B_{2R}}u\vp_R^\frac qp \vp_R ^{q-2\gamma-\frac qp} \,dx \\ &\leq \frac{C}{R^{2\gamma}}\int_{B_{2R}}u\vp_R^\frac qp\,dx \leq CR^{\frac{n}{p'}-2\gamma }\left(\int_{\R^n} u^p\vp_R^q\,dx\right)^\frac1p,
  \end{align*}
  which leads to
  $$\int_{B_R} u^p(x)\,dx\leq C R^{n-2\gamma p'}\quad \text{for every }R>0.$$
 (Notice that the above estimate implies $u\equiv0$ for $1<p<\frac{n}{n-2\gamma}$). Then by H\"older inequality $$\int_{B_{2R}\setminus B_R}\frac{u(x)}{1+|x|^s}\,dx\leq CR^{-s+n-2\gamma\frac{p'}{p}}.$$
  Taking a diadic sum one obtains \eqref{est-int}.
\end{proof}


\subsection{The model solution}

We consider the linear problem \eqref{formula60}, for  $\gamma\in(0,\frac{n}{2})$ and  $F\in L^1_{loc}(\mathbb R^n)$. Assume that we are in the hypothesis of Theorem \ref{thm-1}. We start the  proof of this Theorem by constructing a solution $v$ of the equation having the best possible decay.

Let $\Gamma$ be the fundamental solution for the fractional Laplacian, this is,
$$(-\D)^\gamma\Gamma(x)=\delta_0,\quad \Gamma(x)=\Gamma_{n,\gamma}(x)=\frac{c_{n,\gamma}}{|x|^{n-2\gamma}},$$
where $\delta_0$ is the Dirac measure centered at $0$.
Since $F\in L^1(B_1)$, the convolution $\Gamma*F\chi_{B_1}$ is well-defined almost everywhere in $\R^n$. Therefore, up to a translation, we can assume that $F\Gamma\in L^1(B_1)$. Consequently,
by the previous Lemma \ref{lemma:claim1}, 
the following function is well-defined
\begin{equation}\label{old-v}
v(x):=\int_{\R^n}\left( \Gamma(x-y)-\Gamma(y)\right)F(y)\,dy,
\end{equation}
Moreover:

\begin{lem} \label{est-Ls} Let $u$ and $F$ be as in Theorem \ref{thm-1}.
We have $v\in L_s(\R^n)$ for every $s\geq 2s_0$. 
\end{lem}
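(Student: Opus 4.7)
The plan is to control the weighted $L^1$-integral of $v$ via Tonelli and a careful pointwise estimate on the kernel
$$K_s(y):=\int_{\R^n}\frac{|\Gamma(x-y)-\Gamma(y)|}{1+|x|^{n+2s}}\,dx.$$
If I can show that $\int_{\R^n}F(y)\,K_s(y)\,dy<\infty$, then Tonelli gives absolute convergence of the double integral and hence $v\in L_s(\R^n)$ (and automatically $v\in L^1_{loc}$, since $1/(1+|x|^{n+2s})$ is bounded below on compact sets). The strategy therefore reduces to proving two pointwise bounds: one for $|y|\leq 1$ exploiting $F\Gamma\in L^1(B_1)$, and one for $|y|\geq 1$ that plugs into the moment estimate \eqref{est-f} from Lemma \ref{lemma:claim1}.

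For $|y|\leq 1$ I would use the crude triangle inequality $|\Gamma(x-y)-\Gamma(y)|\leq \Gamma(x-y)+\Gamma(y)$, observe that $\int_{\R^n}\Gamma(x-y)/(1+|x|^{n+2s})\,dx$ is bounded uniformly in $y\in B_1$ (by splitting near and away from $x=y$ and using $n-2\gamma<n$ together with $2s>0$), and that $\int_{\R^n}\Gamma(y)/(1+|x|^{n+2s})\,dx=C\Gamma(y)$. Hence $K_s(y)\leq C(1+\Gamma(y))$ on $B_1$, so $\int_{|y|\leq 1}F\,K_s\,dy \leq C\|F\|_{L^1(B_1)}+C\int_{|y|\leq 1}F\Gamma\,dy<\infty$ by the translation assumption.

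For $|y|\geq 1$ I split the $x$-integration into $\{|x|\leq|y|/2\}$, $\{|y|/2\leq|x|\leq 2|y|\}$, $\{|x|\geq 2|y|\}$. On the inner ball I use Taylor's theorem around $y$, writing $\Gamma(x-y)=\Gamma(y-x)$ (by evenness) so that
$$|\Gamma(x-y)-\Gamma(y)|\leq |\nabla\Gamma(y)||x|+\tfrac12\sup_{\xi\in[0,1]}|D^2\Gamma(y-\xi x)||x|^2\leq \frac{C|x|}{|y|^{n-2\gamma+1}}+\frac{C|x|^2}{|y|^{n-2\gamma+2}},$$
since $|y-\xi x|\geq|y|/2$. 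Integrating these two terms against $(1+|x|^{n+2s})^{-1}$ on $\{|x|\leq|y|/2\}$ (using polar coordinates and tracking which of the exponents $1-2s$, $1-2s$ are negative) yields a contribution of order $|y|^{-(n-2\gamma+\min(2s,1))}$. On the middle and outer regions, I bound $|\Gamma(y)|$ and $|\Gamma(x-y)|$ separately by $C/|y|^{n-2\gamma}$, $C/|x-y|^{n-2\gamma}$, $C/|x|^{n-2\gamma}$ as appropriate; each of the four resulting integrals evaluates to $C/|y|^{n-2\gamma+2s}$ by elementary computation. All told, $K_s(y)\leq C/|y|^{n-2\gamma+\beta}$ for $|y|\geq 1$, with $\beta:=\min(2s,1)$.

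Finally, since $s\geq 2s_0$ and $s_0\in(0,1/2)$, one checks that $\beta=\min(2s,1)>2s_0$: either $2s_0\leq 2s\leq 1$ and $\beta=2s\geq 4s_0>2s_0$, or $2s>1$ and $\beta=1>2s_0$. Hence \eqref{est-f} applied with $\delta=\beta$ gives $\int_{|y|\geq 1}F(y)K_s(y)\,dy\leq C\int F(y)/(1+|y|^{n-2\gamma+\beta})\,dy<\infty$. Combining the two ranges of $|y|$ finishes the argument. The main technical point is getting through the case analysis in region (a) cleanly — in particular, making sure the Taylor exponent $\beta$ beats the threshold $2s_0$ in all regimes — but once that bookkeeping is done the rest is a standard application of Tonelli.
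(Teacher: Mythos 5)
Your proposal is correct and takes essentially the same approach as the paper: apply Tonelli, split the $x$-integration into $\{|x|\leq|y|/2\}$, $\{|y|/2<|x|<2|y|\}$, $\{|x|\geq 2|y|\}$, bound each piece via the natural kernel estimates, and close with \eqref{est-f}. The only cosmetic differences are your explicit separation of $|y|\leq 1$ from $|y|\geq 1$ (the paper treats the small-$|y|$ contribution implicitly through the translation assumption $F\Gamma\in L^1(B_1)$) and your use of a second-order Taylor expansion on the inner ball where the paper's first-order (mean-value) bound already suffices.
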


\begin{proof}
We shall use \eqref{est-f} frequently with $\delta=2s$ or $2s_0$. Calculate
\begin{align}\label{equation10}
\int_{\R^n}&\frac{|v(x)|}{1+|x|^{n+2s}}\,dx\\
&\leq\int_{\R^n}F(y)\left(\int_{\{|x|\leq\frac{|y|}{2}\}}+\int_{\{|x|\geq2{|y|}\}}  +\int_{\{\frac{|y|}{2}<|x|<2|y|\}}\right)\frac{|\Gamma(x-y)-\Gamma(y)|}{1+|x|^{n+2s}}\,dxdy \notag\\
&=:\sum_{i=1}^3I_i.\quad \notag
\end{align}
We notice that   $$|\Gamma(x-y)-\Gamma(y)|\leq C\frac{|x|}{|y|^{n-2\gamma+1}} \qquad\text{for }|x|\leq\frac{|y|}{2},$$
and hence
 \begin{align*}
 I_1 \leq C\int_{\R^n}\frac{F(y)}{|y|^{n-2\gamma+1}}\int_{\{|x| \leq\frac{|y|}{2}\}}\frac{dx}{1+|x|^{n+2s-1}}\,   dy \leq C\int_{\R^n}\frac{F(y)}{|y|^{n-2\gamma+2s_0}} \,dy<\infty. \end{align*}
Next, since $$\int_{\{|x|\geq2{|y|}\}} \frac{|\Gamma(x-y)-\Gamma(y)|}{1+|x|^{n+2s}}\,dx\leq\frac{C}{|y|^{n-2\gamma}}\int_{\{|x|\geq2{|y|}\}} \frac{1}{1+|x|^{n+2s}}\,dx\leq\frac{C}{|y|^{n-2\gamma+2s}},$$
 one also has
 $$I_2<\infty.$$
 Finally, we bound
 \begin{align*}I_3 &\leq \int_{\R^n}F(y)\int_{\{\frac{|y|}{2}\leq |x|\leq2|y|\}}\frac{\Gamma(x-y)}{1+|y|^{n+2s}}\,dxdy
 +C\int_{\R^n}\frac{F(y)}{|y|^{n-2\gamma}}\int_{\{\frac{|y|}{2}\leq |x|\leq 2|y|\}}\frac{1}{1+|x|^{n+2s}}\,dxdy\\&\leq C+C\int_{\R^n}\frac{F(y)}{1+|y|^{n+2s}}\int_{\{|z|<3|y|\}}\frac{dz}{|z|^{n-2\gamma}}
\,dy\\ &\leq C+C\int_{\R^n}\frac{F(y)}{1+|y|^{n-2\gamma+2s}}
\,dy\\
 &<\infty.\end{align*}
 Combining all the above estimates we have that the integral in \eqref{equation10} is finite, as desired.\\
  \end{proof}

Finally, we recall a classification result that will be needed below.

\begin{lem}\label{poly} Let $w\in L_s(\R^n)$ for some $s\geq0$. If $$(-\D)^\sigma w=0\quad\text{in }\R^n,$$ for some $\sigma\geq s$, then $w$ is a polynomial of degree at most $\floor{2s}$, where $\floor{2s}\in\mathbb{N}$ is such that $2s-1\leq \floor{2s}<2s$.   \end{lem}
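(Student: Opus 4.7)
The plan is a Fourier-analytic Liouville argument carried out in $\mathcal{S}'(\R^n)$. First I would note that $w$ defines a tempered distribution: for every $\psi\in\mathcal{S}(\R^n)$ we have $|\psi(x)|\le C_\psi(1+|x|)^{-n-2s-1}$, so the hypothesis $w\in L_s(\R^n)$ makes the pairing $\langle w,\psi\rangle=\int_{\R^n} w\psi\,dx$ absolutely convergent and continuous in $\psi$.

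The key analytic step is to upgrade the equation $\int w\,(-\D)^\sigma\varphi\,dx=0$ from $\varphi\in C_c^\infty(\R^n)$ to every $\varphi\in\mathcal{S}(\R^n)$. Given such a $\varphi$ and a radial cutoff $\eta\in C_c^\infty(\R^n)$ with $\eta\equiv 1$ near the origin, set $\varphi_R(x):=\varphi(x)\eta(x/R)$. Then $\varphi_R\in C_c^\infty(\R^n)$ and $\int w\,(-\D)^\sigma\varphi_R\,dx=0$. Applying Lemma \ref{lem-decay} to $\varphi_R-\varphi$ (which is supported in $\{|x|\ge R\}$) in the spirit of Lemma \ref{convergence} produces both the locally uniform convergence $(-\D)^\sigma\varphi_R\to(-\D)^\sigma\varphi$ and a uniform-in-$R$ bound $|(-\D)^\sigma\varphi_R(x)|\le C_\varphi(1+|x|)^{-n-2\sigma}$. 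Because $\sigma\ge s\ge 0$ implies $L_s(\R^n)\subseteq L_\sigma(\R^n)$, dominated convergence yields $\int w\,(-\D)^\sigma\varphi\,dx=0$ for every $\varphi\in\mathcal{S}(\R^n)$, i.e. $(-\D)^\sigma w=0$ in $\mathcal{S}'(\R^n)$.

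Taking the Fourier transform converts the equation into $|\xi|^{2\sigma}\hat w(\xi)=0$ in $\mathcal{S}'(\R^n)$. Since $|\xi|^{2\sigma}$ is smooth and nonvanishing on $\R^n\setminus\{0\}$, the distribution $\hat w$ is supported at the origin, and the classical structure theorem for point-supported distributions gives $\hat w=\sum_{|\alpha|\le N}c_\alpha\,\partial^\alpha\delta_0$ for some finite $N$. Fourier inversion then identifies $w$ with a polynomial. To pin down the degree, if $w$ has degree $d$ then, along a direction in which its top homogeneous part is bounded below by $c|x|^d$, the requirement $w\in L_s(\R^n)$ reduces to $\int_1^\infty r^{d-1-2s}\,dr<\infty$, which forces $d<2s$; since $d\in\mathbb{N}\cup\{0\}$ this is exactly $d\le\lfloor 2s\rfloor$ in the convention of the statement.

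The most delicate point is the Schwartz-class extension in the second paragraph: one must exhibit a pointwise bound on $(-\D)^\sigma\varphi_R$ that is uniform in $R$ and decays like $(1+|x|)^{-n-2\sigma}$ at infinity, so that dominated convergence can be applied against the weight coming from $w\in L_s(\R^n)$. This is exactly the flavour of the preparatory estimates in Lemmas \ref{lem-decay} and \ref{convergence}. The remaining ingredients—Fourier analysis on tempered distributions and the structure theorem for distributions supported at a point—are entirely standard.
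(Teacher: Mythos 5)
The paper itself gives no proof here; it simply cites \cite[Lemma 2.4]{H-class}. Your Fourier--Liouville argument is the standard route to this statement and is almost certainly what that reference carries out, so in substance you are following the same approach; here I just confirm that the two places where care is required are handled correctly. First, the passage from $C_c^\infty$ to Schwartz test functions is exactly what Lemmas \ref{lem-decay} and \ref{convergence} are built for: since $\varphi-\varphi_R\in\mathcal{S}(\R^n)$ is supported in $\{|x|\ge R\}$ with all Schwartz seminorms tending to zero, the decay estimate of Lemma \ref{lem-decay} (applied after peeling off an integer power of $-\D$ as in Lemma \ref{convergence}) gives both the uniform $(1+|x|)^{-n-2\sigma}$ majorant and the locally uniform convergence, and the hypothesis $\sigma\ge s$ is precisely what makes $w\in L_s\subseteq L_\sigma$ available for dominated convergence. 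Second, the Fourier step should be read with a grain of salt as written: $|\xi|^{2\sigma}$ is not smooth at the origin (unless $2\sigma$ is an even integer), so one cannot literally multiply $\hat w\in\mathcal{S}'$ by it. The clean version of what you are doing is: for $\phi\in C_c^\infty(\R^n\setminus\{0\})$ set $\psi:=\mathcal{F}^{-1}(\phi/|\xi|^{2\sigma})$, note $\phi/|\xi|^{2\sigma}\in C_c^\infty(\R^n\setminus\{0\})$ so that $(-\D)^\sigma\psi=\mathcal{F}^{-1}\phi\in\mathcal{S}$, and then $\langle\hat w,\phi\rangle=c\,\langle w,(-\D)^\sigma\psi\rangle=0$. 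This pins down $\operatorname{supp}\hat w\subset\{0\}$ and the rest of your argument (structure theorem for point-supported distributions, degree count from the $L_s$ integrability constraint $d<2s$, matching the paper's convention for $\floor{2s}$) is correct.
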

\begin{proof} See e.g. the proof of  \cite[Lemma 2.4]{H-class}.
\end{proof}

\section{Superharmonicity}\label{section:max-principle}

Now we are ready for the proof of the superharmonicity property, this is, Theorems \ref{thm-1} and \ref{thm-2}.

\begin{proof}[Proof of Theorem \ref{thm-1}]
Take $v$ as defined in \eqref{old-v}. 	
From Lemma \ref{est-Ls} we have that $v\in L_s(\R^n)$ for every $s\geq s_0$, $s\neq 2\gamma$.
Hence, if we define $w=u-v$, then $w\in L_s(\R^n)$ for every $s\geq s_0,\,s\neq 2\gamma$. In addition, since,  $s_0<\frac12$ and $(-\D )^\gamma w=0$ in $\R^n$, we conclude that $w\equiv const$, thanks to Lemma \ref{poly}. Thus $$(-\D)^\sigma u(x)=(-\D )^\sigma v(x)=c(n,\sigma,\gamma)\int_{\R^n}\frac{1}{|x-y|^{n-2\gamma+2\sigma}}F(y)\,dy\geq 0,$$
as desired.
\end{proof}

\begin{proof}[Proof of Theorem \ref{thm-2}]
The main idea here is to use Lemma \ref{u^p-estimate} to improve the growth of $u$ at infinity. Thus we can simply use the auxiliary function
\begin{align}\label{new-v}
 v(x):=c_n\int_{\R^N}\frac{1}{|x-y|^{n-2\gamma}}f(y)u^p(y)\,dy
  \end{align}
instead of the old \eqref{old-v}. The rest of the proof is similar to that of Theorem \ref{thm-1}.  \end{proof}

\subsection{Proof of Proposition \ref{prop:u-vanish}}\label{subsection:very-subcritical}
On the one hand, we claim that every non-negative solution to  \eqref{5} with $1<p<\frac{n}{n-2\gamma}$  satisfies  \begin{align}\label{claim2.6}
\int_{\R^n}\frac{u(x)}{1+|x|^q}\,dx<\infty \quad\text{ for every  }q>\frac{n}{p'},
\end{align}
where $1=\frac{1}{p}+\frac{1}{p'}$.

 On the other hand, one has $u=v$, where  $v$ is given by \eqref{new-v}. To see this, recall that we have proved above that $u=v+const$.  In order to justify that this constant vanishes, it is enough to show that $u,v\in L_\delta$ for some $\delta<0$ small. Estimate \eqref{claim2.6} yields the result for $u$. Moreover, to check that  $v\in L_\delta$ for some $\delta<0$ one can use \eqref{est-up-6.4}, and  proceed as in the proof of Lemma \ref{est-Ls}. In fact, one would get that $$\int_{\R^n}\frac{v(x)}{1+|x|^q}\,dx<\infty\quad\text{for every }q>2\gamma.$$
 If $u$ is non-trivial,  then we can find $R>0$ such that  $\int_{B_R}fu^p\,dx>0$. Therefore, as $|x-y|\approx |x|$ for $(x,y)\in B_{2R}^c\times B_R$, we obtain
  \begin{equation*}\label{u-lower} u(x)=c_n\int_{\R^n}\frac{f(y)u^p(y)}{|x-y|^{n-2\gamma}} \,dy \geq  c_n\int_{B_R}\frac{f(y)u^p(y)}{|x-y|^{n-2\gamma}} \,dy  \geq C \frac{1}{|x|^{n-2\gamma}}\quad \text{for } |x|\geq 2R.
  \end{equation*}
 This contradicts \eqref{claim2.6} as $\frac{n}{p'}<2\gamma$ for  $1<p<\frac{n}{n-2\gamma}$.\\

It remains to prove the claim \eqref{claim2.6}, and we do that by an induction argument.  Setting $$s_m=s_m(p):=\sum_{k=1}^m\frac{1}{p^k}$$ we see that  $s_\infty(p)=\frac{1}{p-1}$ and  $$n-2\gamma-2\gamma s_\infty(p)=0\quad \text{for }p=\frac{n}{n-2\gamma}.$$ Therefore, as $s_\infty(p)$ is monotone decreasing in $p\in (1,\infty)$, we have that  $$n-2\gamma-2\gamma s_\infty(p)<0\quad \text{for }1<p<\frac{n}{n-2\gamma}.$$ In particular,  there exists an integer  $m_0\geq 1$ (depending on $p$) such that $n-2\gamma-2\gamma s_{m_0}(p)\leq 0$. We shall take $m_0$ to be the smallest one.

Next we show that \begin{align}\label{m0}\int_{\R^n}\frac{u(x)}{1+|x|^q}\,dx<\infty\quad\text{for every }q>n-2\gamma s_{m},\end{align} with $m=m_0$.  As \eqref{m0} holds for $m=1$,   thanks to \eqref{induction}, we only need to consider the case $m_0>1$.  Let us show that if \eqref{m0}   holds for some $m=m_1\in \{1,\dots, m_0-1\}$, then it also holds for $m=m_1+1$.  We fix a   test function $\vp\in C^\infty(\R^n)$ with $\vp=\frac{1}{|x|^{ \rho}}$ on $B_1^c$,      $\rho:=n-2\gamma-2\gamma s_{m_1} +\delta$, $\delta>0$.
Since $\rho>0$, by Lemma \ref{convergence}, monotone convergence theorem and dominated convergence theorem we get that
$$\int_{\R^n}fu^p \vp\, dx=\int_{\R^n}u(-\D)^\gamma\vp\, dx\leq C \int_{\R^n}\frac{u(x)}{1+|x|^{n-2\gamma s_{m_1}+\delta}}\,dx<\infty,$$
for every $\delta>0$.  By H\"older inequality we conclude that \eqref{m0} holds with $m=m_1+1$.  This proves that \eqref{m0} holds with $m=m_0$.

From the definition of $m_0$ we have that the integral in \eqref{m0} is finite for every $q>2\gamma$. Therefore,  we can take $\vp\in C^\infty(\R^n)$ with $\vp=\frac{1}{|x|^\delta}$ on $B_1^c$, $\delta>0$ as a test function to conclude that
 \begin{align}\label{est-up-6.4}\int_{\R^n}\frac{fu^p(x)}{1+|x|^\delta}\,dx<\infty\quad\text{for every }\delta>0.\end{align}   This, together with H\"older inequality yields \eqref{claim2.6}.

\qed\\

\section{Appendix: distance function from a set}\label{Appendix}

Let $\Sigma$ be a compact set in $\mathbb R^n$, and define the distance function
\begin{equation*}
d(x)=\inf_{y\in\Sigma} \{|x-y|\}.
\end{equation*}

\begin{lem}
The function $d$ is $1$-Lipschitz continuous.
\end{lem}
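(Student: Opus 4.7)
The plan is to prove this via a straightforward application of the triangle inequality, exploiting that the infimum over $\Sigma$ commutes well with additive shifts.

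First I would note that since $\Sigma$ is compact and nonempty, for each $x \in \mathbb R^n$ the infimum defining $d(x)$ is actually attained at some $y_x \in \Sigma$, so $d$ takes finite values everywhere. This step is not strictly needed for the Lipschitz bound (the inequality works with infima regardless), but it simplifies the exposition and is worth recording.

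The core argument is the following. Fix $x_1, x_2 \in \mathbb R^n$. For any $y \in \Sigma$, the triangle inequality gives
\begin{equation*}
d(x_1) \leq |x_1 - y| \leq |x_1 - x_2| + |x_2 - y|.
\end{equation*}
Since $|x_1 - x_2|$ does not depend on $y$, I can take the infimum over $y \in \Sigma$ on the right-hand side to obtain
\begin{equation*}
d(x_1) \leq |x_1 - x_2| + d(x_2),
\end{equation*}
i.e.\ $d(x_1) - d(x_2) \leq |x_1 - x_2|$. Swapping the roles of $x_1$ and $x_2$ yields the reverse inequality, and combining the two gives $|d(x_1) - d(x_2)| \leq |x_1 - x_2|$, which is exactly $1$-Lipschitz continuity.

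There is no real obstacle here; the only mild subtlety is remembering that the infimum may be taken after the triangle inequality only because $|x_1 - x_2|$ is independent of $y$. As a consequence, by Rademacher's theorem, $d$ is differentiable almost everywhere with $|\nabla d| \leq 1$, and one can further show $|\nabla d| = 1$ a.e.\ on $\mathbb R^n \setminus \Sigma$, which is the fact used in the body of the paper.
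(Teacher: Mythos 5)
Your proof is correct and follows essentially the same route as the paper's: apply the triangle inequality for an arbitrary $y\in\Sigma$, take the infimum over $y$ on the right-hand side, and then swap the roles of the two points to get the two-sided bound. The extra remarks about attainment of the infimum and the Rademacher consequence are accurate but not needed for the Lipschitz estimate itself.
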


\begin{proof}
Let $x,y\in\R^n$ be any two points. Then, for every $z\in\Sigma$,
 $$d(x)\leq |x-z|\leq|x-y|+|y-z|.$$ Hence, taking infimum over $z\in\Sigma$ we deduce that $$d(x)-d(y)\leq|x-y|. $$
  Similarly, one sees that $d(y)-d(x)\leq|y-x|$.
\end{proof}

By Rademacher theorem, $d$ is differentiable \emph{a.e.}, and $|\nabla d|\leq 1$ \emph{a.e.}. Next we show that:

\begin{prop}
If $d$ is differentiable at $x\in\R^n\setminus\Sigma$, then $|\nabla d(x)|=1$. Moreover, there exists unique $\bar x\in\Sigma$ such that $d(x)=|x-\bar x|$ and $$\nabla d(x)=\frac{x-\bar x}{|x-\bar x|}.$$
\end{prop}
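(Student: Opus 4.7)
The plan is to first identify at least one closest point in $\Sigma$, then use the 1-Lipschitz property together with a directional derivative computation along the segment joining $x$ to that closest point.

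First, since $\Sigma$ is compact and $y\mapsto|x-y|$ is continuous, the infimum defining $d(x)$ is attained: there exists $\bar{x}\in\Sigma$ with $d(x)=|x-\bar{x}|$. Fix any such $\bar{x}$, set $r:=|x-\bar{x}|>0$, and write $v:=(x-\bar{x})/r$, so that $|v|=1$.

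Next I would analyse $d$ along the segment from $\bar{x}$ to $x$. For $s\in[0,r]$, set $x_s:=x-sv=\bar{x}+(r-s)v$. Then $d(x_s)\leq|x_s-\bar{x}|=r-s$, while the previous lemma (1-Lipschitz) gives $d(x_s)\geq d(x)-s=r-s$. Hence $d(x_s)=r-s$ exactly, so
\begin{equation*}
\lim_{s\to 0^+}\frac{d(x-sv)-d(x)}{s}=-1.
\end{equation*}
Since $d$ is assumed differentiable at $x$, this limit equals $-\nabla d(x)\cdot v$, so $\nabla d(x)\cdot v=1$. Combined with the 1-Lipschitz bound $|\nabla d(x)|\leq 1$ and Cauchy--Schwarz ($|\nabla d(x)\cdot v|\leq|\nabla d(x)|\,|v|\leq 1$), the equality case forces $|\nabla d(x)|=1$ and $\nabla d(x)=v=(x-\bar{x})/|x-\bar{x}|$.

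Uniqueness of $\bar{x}$ is then immediate: if $\bar{x}_1,\bar{x}_2\in\Sigma$ both realize the infimum, the above argument applied to each gives
\begin{equation*}
\frac{x-\bar{x}_1}{|x-\bar{x}_1|}=\nabla d(x)=\frac{x-\bar{x}_2}{|x-\bar{x}_2|},
\end{equation*}
and since $|x-\bar{x}_1|=|x-\bar{x}_2|=d(x)$, we conclude $\bar{x}_1=\bar{x}_2$. Equivalently, the formula $\bar{x}=x-d(x)\nabla d(x)$ determines the closest point uniquely. No step here is truly an obstacle; the only point requiring care is verifying that $d(x_s)=r-s$ on the whole segment, which is where both the triangle inequality and the Lipschitz bound are used.
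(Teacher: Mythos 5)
Your proof is correct and follows essentially the same route as the paper: identify a closest point, observe that $d$ is affine with slope $-1$ along the segment from $x$ to that closest point, use differentiability to get the directional derivative, and conclude via the $1$-Lipschitz bound and Cauchy--Schwarz; uniqueness then falls out of the formula $\bar x=x-d(x)\nabla d(x)$. The only cosmetic difference is that you justify $d(x_s)=r-s$ from the Lipschitz estimate $d(x_s)\geq d(x)-s$, whereas the paper notes directly that $B_{d(x)}(x)\cap\Sigma=\emptyset$; these are the same observation in slightly different clothing.
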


\begin{proof}

For simplicity, we assume that $d$ is differentiable at $x=0$ and $0\not\in\Sigma$. We can also assume that the distance $d(0)=:r$ is minimized by the point $x_1:=re_1\in\Sigma$. Then, we have that $B_r\cap\Sigma=\emptyset.$ For $0<t<1$, one has $d(tx_1)\leq  |tx_1-x_1|= (1-t)r$. As $B_r\cap \Sigma=\emptyset$, it follows that $d(tx_1)=(1-t)r$ for $0<t<1$. Therefore, $$\nabla d(0)\cdot e_1=\lim_{t\to0^+}\frac{d(0+te_1)-d(0)}{t}=-1. $$ In particular, as $|\nabla d (0)|\leq 1$, we have that $|\nabla d(0)|=1$.  Thus, $\nabla d(0) =-e_1$.

For any  $\bar x\in\Sigma$ with  $d(0)=|\bar x|=r$, as before we get that $\nabla d(0)=-\frac{\bar x}{r}$.  Hence, the minimizer is unique.



\end{proof}

\noindent\textbf{Acknowledgements.} The authors would like to acknowledge the valuable comments from the referee. W. Ao is supported by NSFC of China No.11801421. M.d.M. Gonz\'alez is supported by the Spanish government grant  MTM2017-85757-P. A. Hyder is supported by the Swiss National Science Foundation grant no. P400P2-183866. J. Wei  is  partially supported by NSERC of Canada.

\Addresses

\begin{thebibliography}{10}
\small

\bibitem{Abatangelo:large}
N. Abatangelo.
Large $S$-harmonic functions and boundary blow-up solutions for the fractional Laplacian.
{\em Discrete Contin. Dyn. Syst.} 35 (2015), no. 12, 5555--5607.

\bibitem{Abatangelo:very-large}
N. Abatangelo.
Very large solutions for the fractional Laplacian: towards a fractional Keller-Osserman condition.
{\em Adv. Nonlinear Anal.} 6 (2017), no. 4, 383--405.

\bibitem{Abatangelo-GomezCastro-Vazquez}
N. Abatangelo, D. G\'omez-Castro, J. L. V\'azquez. Singular boundary behaviour and large solutions for fractional elliptic equations. Preprint.


\bibitem{Ao-DelaTorre-Gonzalez-Wei} W. Ao, A. DelaTorre, M. Gonz\'alez and J. Wei. A gluing approach for the fractional Yamabe problem with prescribed isolated singularities. To appear in {\em Journal f\"ur die reine und angewandte Mathematik}.


\bibitem{Ao-Chan-Gonzalez-Wei}
W. Ao, H. Chan, M. Gonz\'alez, J. Wei. Existence of positive weak solutions for fractional Lane-Emden equations with prescribed singular set. {\em Calc. Var. Partial Differential Equations} 57 (2018), no. 6, Art. 149, 25 pp.

\bibitem{Ao-Chan-DelaTorre-Fontelos-Gonzalez-Wei}
W.~Ao, H.~Chan,  A.~DelaTorre, M.~Fontelos, M.d.M.~Gonzalez, J.~Wei.
\newblock On higher dimensional singularities for the fractional Yamabe problem: a non-local Mazzeo-Pacard program.
\newblock {\em Duke Math Journal} 168, n. 17 (2019), 3297--3411.


\bibitem{Assouad1}
P. Assouad. {\em Espaces m\'etriques, plongements, facteurs.}  Th\`ese de doctorat. Publications Mathé\'ematiques d'Orsay, No. 223--7769. U.E.R. Math\'ematique, Universit\'e Paris XI, Orsay, 1977.


\bibitem{Assouad2}
P. Assouad. \'Etude d'une dimension m\'etrique li\'ee \`a la possibilit\'e de plongements dans $\mathbf R^n$. {\em C. R. Acad. Sci. Paris S\'er. A--B} 288 (1979), no. 15, A731--A734.

\bibitem{CaffarelliJinSireXiong}
L.~Caffarelli, T.~Jin, Y.~Sire, J.~Xiong.
\newblock Local analysis of solutions of fractional semi-linear elliptic
  equations with isolated singularities.
\newblock {\em Arch. Ration. Mech. Anal.}, 213 (2014), n. 1, 245--268.

\bibitem{Caffarelli-Silvestre}
L. Caffarelli, L. Silvestre. An extension problem related to the fractional Laplacian. {\em Comm. Partial Differential Equations} 32 (2007) 1245--1260.

\bibitem{Case-Chang}
J.~Case, S.-Y. A.~Chang.
\newblock On fractional {G}{J}{M}{S} operators.
\newblock {\em  Communications on Pure and Applied Mathematics} 69 (2016), no. 6, 1017--1061.

\bibitem{Chang-Gonzalez}
S.-Y. A.~Chang, M.~Gonz{\'a}lez.
\newblock Fractional {L}aplacian in conformal geometry.
\newblock {\em Adv. Math.}, 226 (2011), no. 2, 1410--1432.

\bibitem{Chan-DelaTorre1}
H. Chan, A. DelaTorre. Singular solutions and a critical Yamabe problem revisited. 	Preprint arXiv:1912.10352

\bibitem{Chan-DelaTorre2}
H. Chan, A. DelaTorre. Singular solutions for a critical fractional Yamabe problem. In preparation.

\bibitem{Chen-Quaas}
H. Chen, A. Quaas.
Classification of isolated singularities of nonnegative solutions to fractional semi-linear elliptic equations and the existence results.
{\em J. Lond. Math. Soc.} (2) 97 (2018), no. 2, 196--221.

\bibitem{DelaTorre-Gonzalez}
A.~DelaTorre, M.d.M.~Gonz{\'a}lez.
\newblock Isolated singularities for a semilinear equation for the fractional
  {L}aplacian arising in conformal geometry.
\newblock {\em Rev. Mat. Iber.} 34 (2018), no. 4, 1645--1678.

\bibitem{DelaTorre-delPino-Gonzalez-Wei}
A. DelaTorre, M. del Pino, M.d.M. Gonz\'alez, J. Wei.  Delaunay-type singular solutions for the fractional {Y}amabe problem.
\newblock{\em Math Annalen} 369 (2017) 597--62.


\bibitem{Dyda-Ihnatsyeva-Lehrback-Tuominen-Vahakangas}
B. Dyda, L. Ihnatsyeva, J. Lehrb\"ack, H. Tuominen, A. V\"ah\"akangas.
Muckenhoupt $A_p$-properties of distance functions and applications to Hardy-Sobolev-type inequalities.
{\em Potential Anal}. 50 (2019), no. 1, 83--105.

\bibitem{Dyda-Vahakangas}
B. Dyda, A. V\"ah\"akangas.
A framework for fractional Hardy inequalities.
{\em Ann. Acad. Sci. Fenn. Math.} 39 (2014), no. 2, 675--689.

\bibitem{Evans-Gariepy}  L. Evans,  R.  Gariepy.
Measure theory and fine properties of functions.
\emph{Studies in Advanced Mathematics. CRC Press, Boca Raton, FL}, 1992. viii+268 pp. ISBN: 0-8493-7157-0


\bibitem{Fazly-Wei-Xu}
M. Fazly,  J. Wei, X. Xu.  A point-wise inequality for the fourth order Lane-Emden equation. {\em Analysis \& PDE} 8(2015), no. 7, 1541--1563.


\bibitem{Gonzalez:survey}
M.~d.~M. Gonz{\'a}lez.
Recent progress on the fractional Laplacian in conformal geometry.
Chapter in {\em Recent Developments in Nonlocal Theory}. Berlin, Boston: Sciendo Migration.  G. Palatucci \& T. Kuusi (Eds.) (2018)

\bibitem{Gonzalez-Mazzeo-Sire}
M.~d.~M. Gonz{\'a}lez, R.~Mazzeo, Y.~Sire.
\newblock Singular solutions of fractional order conformal {L}aplacians.
\newblock {\em J. Geom. Anal.}, 22 (2012), no. 3, 845--863.


\bibitem{Gonzalez-Qing}
M.~d.~M. Gonz{\'a}lez, J.~Qing.
\newblock Fractional conformal {L}aplacians and fractional {Y}amabe problems.
\newblock {\em Anal. PDE}, 6 (2013), no. 7, 1535--1576.

\bibitem{Gonzalez-Wang}
M.~d.~M. Gonz{\'a}lez, M.~Wang.
\newblock Further results on the fractional Yamabe problem: the umbilic case.
\newblock {\em J. Geom. Anal.} 28 (2018), no. 1, 22--60.


\bibitem{Grubb}
G.  Grubb. Fractional-order operators: boundary problems, heat equations. {\em Mathematical analysis and applications—plenary lectures}, 51--81, {\em Springer Proc. Math. Stat.}, 262, Springer, Cham, 2018.

\bibitem{Guillarmou-Qing}
C. Guillarmou, J. Qing.
Spectral characterization of Poincar\'e-Einstein manifolds with infinity of positive Yamabe type.
{\em Int. Math. Res. Not. IMRN} 2010, no. 9, 1720--1740.

\bibitem{H-class}
A. Hyder. Structure of conformal metrics on $\R^n$ with constant $Q$-curvature. {\em Differential and Integral
Equations} 32 (2019), no. 7--8, 423--454.



\bibitem{Jin-Queiroz-Sire-Xiong}
T. Jin, O. de Queiroz, Y. Sire, J. Xiong.
On local behavior of singular positive solutions to non-local elliptic equations. {\em Calc. Var. PDE.} 56 (2017), no. 1, Art. 9, 25 pp.


\bibitem{Jin-Xiong}
T. Jin, J. Xiong. Asymptotic symmetry and local behavior of solutions of higher order conformally invariant equations with isolated singularities. Preprint arXiv:1901.01678.

\bibitem{KJV}
A. K\"aenm\"ki, J.  Lehrb\"ack, M.  Vuorinen.
 Dimensions, Whitney covers, and tubular neighborhoods.
 \emph{Indiana Univ. Math. J.} 62 (2013), no. 6, 1861--1889.

\bibitem{Kim-Musso-Wei} S. Kim, M. Musso, J. Wei.  Existence theorems of the fractional Yamabe problem.  {\em Anal. \& PDE} 11 (2018), no. 1, 75--113.


\bibitem{Lehrback}
J. Lehrb\"ack. Hardy inequalities and Assouad dimensions. {\em J. Anal. Math.} 131 (2017), 367--398.

\bibitem{Luukkainen}
J. Luukkainen. Assouad dimension: antifractal metrization, porous sets, and homogeneous measures. {\em J. Korean Math. Soc.} 35 (1998), no. 1, 23--76.

\bibitem{Mayer-Ndiaye} M. Mayer, C. B. Ndiaye. Fractional Yamabe problem on locally flat conformal infinities of Poincare-Einstein manifolds. Preprint  arXiv:1701.05919.

\bibitem{Mazzeo-Smale}
R. Mazzeo, N. Smale. Conformally flat metrics of constant positive scalar curvature on subdomains of the sphere. {\em J. Differential Geom}. 34 (1991), no. 3, 581--621.

\bibitem{Qing-Raske}
J. Qing, D. Raske.
On positive solutions to semilinear conformally invariant equations on locally conformally flat manifolds.
{\em Int. Math. Res. Not.} 2006, Art. ID 94172, 20 pp.


\bibitem{WX} J. Wei,  X. Xu. Classification of solutions of higher order conformally invariant equations. {\em Math. Annalen} 313 (1999), no. 2,  207--228.


\bibitem{Zhang}
R. Zhang.
Nonlocal curvature and topology of locally conformally flat manifolds. {\em Adv. Math.} 335 (2018), 130--169.


\end{thebibliography}
\end{document}